\crefname{subsection}{Subsection}{Subsections}
\crefname{subsubsection}{Subsubsection}{Subsubsections}
\theoremstyle{definition}
\newtheorem{theorem}{Theorem}[subsection]
\newtheorem{defn}[theorem]{Definition}
\newtheorem{ex}[theorem]{Example}
\newtheorem{cor}[theorem]{Corollary}
\newtheorem{lemma}[theorem]{Lemma}
\newtheorem{prop}[theorem]{Proposition}
\newtheorem{rmk}[theorem]{Remark}
\newtheorem{question}[theorem]{Question}
\newtheorem{problem}[theorem]{Problem}
\newtheorem*{rmk*}{Remark}
\newtheorem*{ex*}{Example}
\newtheorem*{theorem*}{Theorem}
\newtheorem*{defn*}{Definition}
\newcommand{\bbZ}{\mathbb{Z}}
\newcommand{\bbG}{\mathbb{G}}
\newcommand{\bbH}{\mathbb{H}}
\newcommand{\bbF}{\mathbb{F}}
\newcommand{\bbR}{\mathbb{R}}
\newcommand{\bbC}{\mathbb{C}}
\newcommand{\calF}{\mathcal{F}}
\newcommand{\Sp}{\mathcal{S}\mathrm{p}}
\newcommand{\Mod}{\mathcal{M}\mathrm{od}}
\newcommand{\Mack}{\mathcal{M}\mathrm{ack}}
\newcommand{\Map}{\operatorname{Map}}
\newcommand{\Hom}{\operatorname{Hom}}
\newcommand{\Aut}{\operatorname{Aut}}
\newcommand{\Fib}{\operatorname{Fib}}
\newcommand{\coker}{\operatorname{Coker}}
\newcommand{\colim}{\operatorname*{colim}}
\newcommand{\im}{\operatorname{Im}}
\renewcommand{\ker}{\operatorname{Ker}}
\newcommand{\res}{\operatorname{res}}
\newcommand{\tr}{\operatorname{tr}}
\newcommand{\Th}{\operatorname{Th}}
\newcommand{\End}{\operatorname{End}}
\newcommand{\ord}{\operatorname{ord}}
\newcommand{\Pic}{\operatorname{Pic}}
\newcommand{\Spin}{\operatorname{Spin}}
\newcommand{\h}{\mathrm{h}}
\newcommand{\op}{\mathrm{op}}
\newcommand{\cl}{\mathrm{cl}}
\newcommand{\alg}{\mathrm{alg}}
\newcommand{\bs}{{-}}
\newcommand{\bbs}{{=}} 
\newcommand{\ul}{\underline}
\newcommand{\ol}{\overline}
\newcommand\xqed[1]{%
  \leavevmode\unskip\penalty9999 \hbox{}\nobreak\hfill
  \quad\hbox{#1}}
\newcommand\tqed{\xqed{$\triangleleft$}}
\DeclareRobustCommand{\tvdots}{%
  \vbox{\baselineskip4\p@\lineskiplimit\z@\kern0\p@\hbox{.}\hbox{.}\hbox{.}}}
\newcommand{\rightsim}{\xrightarrow{\raisebox{-1pt}{\tiny{$\sim$}}}}
\begin{document}

\title[Equivariant equivalences of the form $\Sigma^V X \simeq \Sigma^W X$]{Equivalences of the form $\Sigma^V X \simeq \Sigma^W X$ \\
in equivariant stable homotopy theory}
\author[William Balderrama]{William Balderrama}
%\date{\today}
\subjclass[2020]{
19L20, % J-homomorphism, Adams operations
19L47, % Equivariant K-theory 
55P42, % Stable homotopy theory, spectra
55Q91. % Equivariant homotopy groups
}

% 39 pages. v2: reorganized, with some new applications

\begin{abstract}
We study equivalences of the form $\Sigma^{V}X\simeq \Sigma^{W}X$, where $G$ is a compact Lie group, $X$ is a $G$-spectrum, and $V$ and $W$ are $G$-representations. These equivalences encode a periodicity phenomenon in $G$-equivariant homotopy theory which generalizes the classical James periodicity for $G = C_2$. 

When $X = C(a_\lambda)$ is the cofiber of an Euler class, we construct an $RO(G)$-graded $J$-homomorphism $J\colon \pi_\lambda KO_G\rightarrow \pi_\star^G C(a_\lambda)^\times$ which gives control over these periodicities. It also produces infinite periodic families in the $G$-equivariant stable stems. We illustrate this with several explicit examples.

More generally, our work gives information about $RO(G)$-graded units in equivariant stable cohomotopy rings. We apply this to construct universal periodicities and differentials in the $G$-homotopy fixed point spectral sequence, and other equivariant Atiyah--Hirzebruch spectral sequences.
\end{abstract}

\maketitle

\vspace{-0.55\baselineskip}
\section{Introduction}
\subsection{Background}\label{ssec:background}

In \cite{bredon1967equivariant,bredon1968equivariant}, Bredon introduced the $C_2$-equivariant stable stems. In modern notation, these are the groups $\pi_\star S_{C_2}$ comprising the $RO(C_2)$-graded coefficient ring of the $C_2$-equivariant sphere spectrum. At the same time, he introduced groups that, in modern notation, may be identified as the $RO(C_2)$-graded homotopy groups $\pi_\star^{C_2} C(a_\sigma^{m})$ of the cofiber of the Euler class $a_\sigma^{m}\colon S^{-m\sigma}_{C_2}\rightarrow S^0_{C_2}$.

In addition to fitting these groups into the evident long exact sequences, Bredon observed that they satisfy a certain periodicity:
\begin{equation}\label{eq:bredon}
\pi_{\star+2^{\gamma(m)}}^{C_2} C(a_\sigma^{m+1})\cong \pi_{\star+2^{\gamma(m)}\sigma}^{C_2}C(a_\sigma^{m+1}),
\end{equation}
where $\gamma(m) = \#\{0<k\leq m : k\equiv 0,1,2,4\pmod{8}\}$.

This periodicity was further studied by Araki \cite{araki1979forgetful}.
The $C_2$-spectrum $C(a_\sigma^{m+1})$ is dual to
the $m$-sphere $\ul{S}^m$ with its free antipodal action. The theory of Clifford algebras provides a trivialization of the $C_2$-equivariant vector bundle $2^{\gamma(m)}\sigma\times \ul{S}^m \rightarrow \ul{S}^m$, and Araki used this to produce and study invertible elements
\begin{equation}\label{eq:uself}
\omega_m \in \pi_{2^{\gamma(m)}(1-\sigma)}^{C_2}C(a_\sigma^{m+1})^\times.
\end{equation}
Additional properties of these were established by Araki and Iriye in \cite{arakiiriye1982equivariant}, where they were applied to the computation of $C_2$-equivariant stable stems.

As already observed by Bredon, the groups $\pi_\star^{C_2} C(a_\sigma^{m+1})$ can be identified as nonequivariant stable homotopy groups of real stunted projective spaces. From this perspective, \cref{eq:bredon} is a consequence of \textit{James periodicity} \cite{james1958crosssections}, as also noted by Landweber \cite{landweber1968conjugations}. In \cite{behrensshah2020c2}, Behrens and Shah used the Adams isomorphism and James periodicity directly to produce invertible elements of the form \cref{eq:uself}. Their work emphasizes the interaction with the $C_2$-equivariant Adams spectral sequence, constructing these elements to lift powers of the orientation class $u_\sigma \in \pi_{1-\sigma}H\bbF_2^{C_2}$.

Our goal in this paper is to put these results in the context of a more general periodicity phenomenon in equivariant stable homotopy theory, and give applications.

\subsection{Summary}

The starting point of our investigation is a reinterpretation and generalization of these classical results in terms of the \textit{$J$-homomorphism}. Classically, the stable $J$-homomorphism for a space $Z$ is a homomorphism
\begin{equation}\label{eq:classicaljhom}
KO^{-1}(Z)\rightarrow \pi_0 D(\Sigma^\infty_+ Z)^\times,
\end{equation}
where $KO^{-1}(Z)$ is the $K$-theory of real vector bundles over the suspension of $Z$ and $\pi_0 D(\Sigma^\infty_+ Z)^\times$ is the group of units in the stable cohomotopy ring of $Z$. The $J$-homomorphism was originally introduced by G.W.\ Whitehead \cite{whitehead1942homotopy}, and the construction works just as well equivariantly, as has been studied by several people going back to Segal \cite{segal1971equivariant}.

We introduce the following refinement of the equivariant $J$-homomorphism. Fix a compact Lie group $G$ and compact $G$-space $Z$, with \textit{unreduced} suspension $SZ$.

\begin{theorem}[\cref{thm:jhom}]\label{thm:jhomomorphism}
The equivariant $J$-homomorphism refines to a homomorphism
\[
J\colon \widetilde{KO}{}_G^0(SZ)\rightarrow \pi_\star^G D(\Sigma^\infty_+ Z)^\times,
\]
defined up to signs, of the following signature: if $\xi \in \widetilde{KO}{}_G^0(SZ)$ restricts along the inclusion $S^0\rightarrow SZ$ to $\alpha \in \widetilde{KO}{}_G^0(S^0)\cong RO(G)$, then $J(\xi) \in \pi_\alpha^G D(\Sigma^\infty_+ Z)$.
\tqed
\end{theorem}

This refines the classical equivariant $J$-homomorphism as in \cref{eq:classicaljhom} in the sense that the latter is obtained by restricting along the canonical map $SZ\rightarrow\Sigma(Z_+)$. By ``defined up to signs'' we mean that $J$ takes values in orbits for the action by the subgroup of orthogonal units in $(\pi_0 S_G)^\times$, i.e.\ those obtained by compactifying an automorphism of $G$-representations. See \cref{rmk:signs} for further discussion.

\begin{ex}\label{ex:repsphere}
Let $Z = S(\lambda)$ be the unit sphere in a $G$-representation $\lambda$. In this case the cofiber sequence $S(\lambda)_+\rightarrow S^0\rightarrow S^\lambda$ yields equivalences
\[
SZ\simeq S^\lambda,\qquad D(\Sigma^\infty_+Z)\simeq C(a_\lambda),
\]
where $a_\lambda \in \pi_{-\lambda}S_G$ is the Euler class of $\lambda$, so our $J$-homomorphism takes the form
\[
J\colon \pi_\lambda KO_G\rightarrow \pi_\star^G C(a_\lambda)^\times,
\]
where if $\xi \in \pi_\lambda KO_G$ then
\[
|J(\xi)|=\alpha \qquad \text{when}\qquad a_\lambda\xi = \alpha \in \pi_0 KO_G = RO(G)\eqno\triangleleft
\]
\end{ex}

\begin{ex}\label{ex:james}
It follows from the structure of $\pi_0 KO_{C_2}\otimes C(a_\sigma^{m+1})\cong KO^0(\bbR P^m)$ \cite{adams1962vector} that
\[
\im(a_\sigma^{m+1}\colon \pi_{(m+1)\sigma}KO_{C_2}\rightarrow RO(C_2)) = \bbZ\{2^{\gamma(m)}(1-\sigma)\},
\]
so \cref{thm:jhomomorphism} encodes the classical James periodicity of \cref{ssec:background}.
\tqed
\end{ex}

\begin{rmk}\label{rmk:periodicity}
In general, an invertible element $u\in \pi_\alpha^G C(a_\lambda)$ gives rise to a secondary \textit{periodicity operator}
\[
P_u\colon\pi_\star^G(\bs)\rightharpoonup \pi_{\star+\alpha}^G(\bs),
\]
defined on the kernel of $a_\lambda$ and defined modulo the projection $\partial(u) \in \pi_{\alpha+\lambda-1}S_G$ of $u$ onto the top cell of $C(a_\lambda)$. These operators are periodic in the sense that if $a_\lambda x = 0$ then $x \in P_{u^{-1}}(P_u(x))$. Thus \cref{ex:repsphere} implies that the equivariant $K$-theory of representation spheres parametrizes certain nontrivial periodicities defined on Euler-torsion in $G$-equivariant stable homotopy theory.
\tqed
\end{rmk}

As \cref{rmk:periodicity} suggests, \cref{thm:jhomomorphism} can be used to produce \textit{infinite periodic families} in the $RO(G)$-graded equivariant stable stems.

\begin{ex}[\cref{ssec:quaternions}]\label{ex:quat}
Let $\bbH$ denote the tautological representation of the quaternion group $Q_8\subset\Sp(1)$ of order $8$. If we define
\[
p(n) = \begin{cases}
2n,&n\text{ even},\\
2n+1,&n\text{ odd},
\end{cases}
\]
then $2^{p(n)}(4-\bbH) = a_\bbH^{n+1} \cdot b_{n+1}$ for some $b_{n+1}\in \pi_{(n+1)\bbH}KO_{Q_8}$ which lifts a generator of $\pi_{4(n+1)}KO$ when $n$ is even and $4$ times a generator when $n$ is odd. From this one obtains invertible elements
\[
u_{2^{p(n)}\bbH} = J(b_{n+1}) \in \pi_{2^{p(n)}(4-\bbH)}^{Q_8} C(a_\bbH^{n+1}).
\]
By taking powers and projecting onto the top cell of $C(a_\bbH^{n+1})$, this gives for each $n$ the infinite periodic family
\[
\partial(u_{2^{p(n)}\bbH}^k) \in \pi_{2^{p(n)}k(4-\bbH)+(n+1)\bbH-1}S_{Q_8},
\]
all nonzero for $k\neq 0$, satisfying
\[
\res^{Q_8}_e(\partial(u_{2^{p(n)}}^k)) = \begin{cases}
k\cdot j_{4n+3},&n\text{ even},\\
k\cdot 4j_{4n+3},&n\text{ odd},
\end{cases}
\]
where $j_{n} \in \pi_{n}S$ is a generator of the image of $J$ in this degree.
\tqed
\end{ex}

In \cref{sec:ahss}, we apply \cref{thm:jhomomorphism} to construct universal periodicities and differentials in equivariant Atiyah--Hirzebruch spectral sequences. We refer the reader there for the general situation and just state here a special case.

Given a finite group $G$ and $G$-ring spectrum $R$, there is a \textit{homotopy fixed point spectral sequence}
\[
E_2(R) = H^\ast(G;\pi_\star^e R)\Rightarrow \pi_{\star-\ast}^GR_h^\wedge
\]
computing the $RO(G)$-graded homotopy of the Borel completion $R_h^\wedge = F(EG_+,R)$. This spectral sequence exhibits periodic behavior: there are invertible elements $u_\alpha \in \pi_{|\alpha|-\alpha}^e R\cong \pi_0^e R$ for each $\alpha\in RO(G)$, canonical up to a sign, allowing one to identify
\[
E_2(R) = H^\ast(G;\pi_\ast^e R[u_\lambda^{\pm 1} : \lambda\text{ irreducible $G$-representation}]).
\]
Here, $g\in G$ acts on $u_\lambda$ by $\pm 1$ according to whether the action of $g$ on $\lambda$ is oriented. \cref{thm:jhomomorphism} allows us to construct universal differentials on these classes. If $\bbZ\leftarrow \bbZ[G]\leftarrow \bbZ[G]\{I_1\}\leftarrow\bbZ[G]\{I_2\}\leftarrow \cdots$ is the free $\bbZ[G]$-module resolution associated to a cell structure on $EG$, then an $E_1$-page for the homotopy fixed point spectral sequence is given by
\[
E_1^{\alpha,f}(R) = \pi_{|\alpha|+f}^eR^{\times I_f}\Rightarrow\pi_\alpha^G R_h^\wedge.
\]
In the case $R = KO_G$ and in integer degrees, this is the classical Atiyah--Hirzebruch spectral sequence for $KO^\ast BG$, and the classical $J$-homomorphism defines a map
\[
\tilde{J}\colon E_1^{0,f}(KO_G) = \pi_fKO^{\times I_f}\rightarrow \pi_{f-1}S^{\times I_f} = E_1^{-1,f}(S_G).
\]
Given a $0$-dimensional virtual $G$-representation $\alpha$, set $t_\alpha = u_{-\alpha} \in  E_1^{\alpha,0}(R)$.

\begin{theorem}[\cref{thm:ahss}]\label{thm:hfpss}
Suppose that $\alpha\in RO(G)$ is detected in the Atiyah--Hirzebruch spectral sequence for $KO^0(BG)$ in positive filtration $f$ by $b\in E_1^{0,f}(KO)$. Then $t_\alpha \in E_1^{\alpha,0}(R)$ survives to the $E_f$-page, whereupon
\[
d_f(t_\alpha) = \pm\tilde{J}(b)  t_\alpha
\]
in $E_f^{\alpha-1,f}(R)$.
\tqed
\end{theorem}

\begin{ex}
When $G = C_2$, the universal space $EC_2 = S(\infty\sigma)$ admits a cell structure with one free cell in each degree, giving
\begin{align*}
E_1^{s,f}(KO_{C_2}) &= \pi_{s+f}KO \Rightarrow KO^{-s}(BC_2),\\
E_1^{\star,\ast}(S_{C_2}) &= \pi_\ast S[u_\sigma^{\pm 1},a_\sigma]\Rightarrow \pi_{\star}(S_{C_2})_h^\wedge.
\end{align*}
Here, $a_\sigma$ is the generator of $H^1(C_2;\pi_{1-\sigma}^eS_{C_2})\cong\bbZ/(2)$. Let $\rho(n)$ denote the $n$th Hurwitz--Radon number, i.e.\ if $\nu_2(n)=4a+b$ with $0\leq b \leq 3$ then $\rho(n) = 8a+2^b$. Then $k(1-\sigma)$ is detected in the Atiyah--Hirzebruch spectral sequence for $KO^0(BC_2)$ by a generator of $\pi_{\rho(k)}KO$, and it follows that 
\[
d_{\rho(k)}(u_\sigma^k) = j_{\rho(k)-1} \cdot a_\sigma^{\rho(k)} u_\sigma^{-\rho(k)}\cdot u_\sigma^k,
\]
where $j_n\in \pi_n S$ is a generator of the image of $J$ in degree, with the understanding that $j_0 = \pm 2$.  These can also be interpreted as primary differentials in the Atiyah--Hirzebruch spectral sequence for real projective space. This essentially goes back to Adams \cite{adams1962vector}, compare also \cite[Theorem 3.5]{arakiiriye1982equivariant} and \cite[Proposition 2.17]{brunermaymccluresteinberger1986hinfinity}.
\tqed
\end{ex}

\begin{ex}
\cref{thm:hfpss} only produces universal differentials, which can vanish in a given $G$-ring spectrum. For example, the homotopy fixed point spectral sequence for Real bordism $M\bbR$ takes the form
\[
E_1^{\star,\ast}(M\bbR) = \pi_\ast MU[u_\sigma^{\pm 1},a_\sigma]\Rightarrow \pi_\star^{C_2} M\bbR.
\]
\cref{thm:hfpss} implies $d_1(u_\sigma) = \pm 2 a_\sigma$ and $d_2(u_\sigma^2) = \eta a_\sigma^2$. As $\eta$ vanishes in $MU$, this implies that $u_\sigma^2$ must survive to (at least) the $E_3$-page for $M\bbR$. In fact one has
\[
d_3(u_\sigma^2) = a_\sigma^3\ol{v}_1 ,
\]
where $\ol{v}_1 \in \pi_{1+\sigma}M\bbR$ is detected by $u_\sigma^{-1}v_1$ \cite{hukriz2001real}. Given this, one can instead interpret \cref{thm:hfpss} as saying that $a_\sigma \ol{v}_1 \in \pi_1 M\bbR$ is the Hurewicz image of a class in $\pi_1 S_{C_2}$ which is killed by $a_\sigma^2$ and lifts $\eta$. The only such class is $\eta_\cl + a_\sigma^2\nu_{C_2}$, where $\eta_\cl\in \pi_1 S_{C_2}$ and $\nu_{C_2}\in \pi_{1+2\sigma}S_{C_2}$ are the nonequivariant and equivariant complex and quaternionic Hopf maps respectively. As $\nu_{C_2}$ has trivial Hurewicz image in $M\bbR$, this encodes the identity $a_\sigma \ol{v}_1 = \eta_\cl$. Similar considerations apply to higher differentials and other $G$-spectra.
\tqed
\end{ex}

\begin{ex}
When $G = Q_8$, as $\bbH$ is oriented the homotopy fixed point spectral sequence restricted to degrees $\ast+\ast\bbH$ takes the form
\[
E_2 = H^\ast(Q_8;\pi_\ast^e R)[u_\bbH^{\pm 1}]\Rightarrow \pi_{\ast+\ast\bbH}^{Q_8}R_h^\wedge,
\]
where $|u_\bbH| = 4-\bbH$. Using the fact that the unit sphere $S(n\bbH)$ is a $(4n-1)$-skeleton of $EQ_8$, it follows from \cref{thm:hfpss} and \cref{ex:quat} that there are differentials
\[
d_{4(n+1)}(u_\bbH^{2^{p(n)}}) = \begin{cases}
j_{4n+3}\cdot  a_\bbH^{n+1}u_\bbH^{-(n+1)} \cdot u_\bbH^{2^{p(n)}},&n\text{ even},\\
4j_{4n+3} \cdot a_\bbH^{n+1}u_\bbH^{-(n+1)} \cdot u_\bbH^{2^{p(n)}},&n\text{ odd}
\end{cases}
\]
for $n\geq 0$, where $a_\bbH$ is detected by the generator of $H^4(Q_8;\pi_{4-\bbH}^e S_{Q_8}) \cong \bbZ/(8)$.
\tqed
\end{ex}

In \cref{sec:bordism} we investigate an additional property of the invertible elements produced by \cref{thm:jhomomorphism} that makes precise a certain analogy between these equivariant ``James-type periodicities'' and $v_n$-periodicity in classical stable homotopy theory. This analogy was first highlighted by Behrens and Shah \cite{behrensshah2020c2} in their construction of $C_2$-equivariant $u_\sigma$-self maps. Given a $0$-dimensional virtual complex $G$-representation $\alpha$, there is a canonical invertible \textit{Thom class}
\[
t_\alpha \in \pi_{\alpha}MU_G,
\]
where $MU_G$ is tom Dieck's homotopical $G$-equivariant complex cobordism spectrum.

\begin{defn}[\cref{def:talpha}]
A \textit{$t_\alpha$-element} of order $n>0$ in a $G$-ring spectrum $R$ is an invertible element $t \in \pi_{n\alpha}^G R$ lifting $t_\alpha^n$ under the Hurewicz map $R\rightarrow R \otimes MU_G$.
\tqed
\end{defn}

When $\alpha = |V| - V$ one might call these \textit{$u_V$-elements}. Note that multiplication by a $t_\alpha$-element induces a \textit{$t_\alpha$-self map}: a self-map inducing multiplication by a power of $t_\alpha$ in $MU_G$-theory. The observation here is now the following.

\begin{prop}[\cref{prop:jt2}]\label{prop:jt}
The complex $J$-homomorphism
\[
J\colon \widetilde{KU}{}_G^0(SZ)\rightarrow\pi_\star^G D(\Sigma^\infty_+Z)^\times
\]
takes values in $t_\alpha$-elements for various $\alpha$: if $|J(\xi)|=\alpha$ then $J(\xi)$ is a $t_\alpha$-element.
\tqed
\end{prop}

The analogous proposition holds with $KU$ and $MU$ replaced by $KO$ and $MO$, or by $KSp$ and $MSp$.

\begin{ex}
As $H\bbF_2^{C_2}\otimes C(a_\sigma^{m+1})$ is $MO_{C_2}$-oriented, it follows from the unoriented analogue of \cref{prop:jt} that the invertible elements $u_{2^{\gamma(m)}\sigma} \in \pi_{2^{\gamma(m)}(1-\sigma)}^{C_2}C(a_\sigma^{m+1})$ guaranteed by \cref{ex:james} have Hurewicz image $u_\sigma^{2^{\gamma(m)}} \in (H\bbF_2^{C_2})_{2^{\gamma(m)}(1-\sigma)} C(a_\sigma^{m+1})$, where $u_\sigma \in \pi_{1-\sigma}H\bbF_2^{C_2}$ is the orientation class. In other words, we recover the $u_\sigma$-elements produced by Behrens--Shah in \cite[Theorem 7.7]{behrensshah2020c2}.
\tqed
\end{ex}

Using equivariant nilpotence techniques we are able to bootstrap \cref{prop:jt} into a general existence criterion for $t_\alpha$-elements.

\begin{theorem}[\cref{thm:maint}]
Let $G$ be a finite group and $R$ be a $G$-ring spectrum, and suppose
\[
\Phi^C R \neq 0 \implies \res^G_C\alpha = 0
\]
for all cyclic subgroups $C\subset G$. Then $R$ admits a $t_\alpha$-element of order dividing a power of $|G|$.
\tqed
\end{theorem}

There are more $RO(G)$-graded periodicities than are accounted for by $t_\alpha$-self maps. For example, there are stable equivalences $S^V\simeq S^W$ that do not come from an isomorphism $V\cong W$ of $G$-representations. \cref{sec:j} contains some observations about this situation, mostly adapting work of tom Dieck, Hauschild, Petrie, and Tornehave \cite{dieckpetrie1978geometric, dieck1979transformation, tornehave1982equivariant} on homotopy equivalent representation spheres and the equivariant Adams conjecture. For example, we prove the following.

\begin{theorem}[\cref{cor:jg}]\label{thm:generalequiv}
Let $G$ be a finite group and $X$ be a compact $G$-spectrum. Given $\alpha \in RO(G)$, there exists an equivalence $\Sigma^{n\alpha}X\simeq X$ for some $n\geq 1$ if and only if
\[
\Phi^C X \neq 0 \implies |\alpha^C| = 0
\]
for all cyclic subgroups $C\subset G$.
\tqed
\end{theorem}

\begin{ex}
Given a finite group $G$, $G$-representation $\lambda$, and $\alpha \in RO(G)$, there exists an equivalence $\Sigma^{n\alpha}C(a_\lambda)\simeq C(a_\lambda)$ for some $n\geq 1$ if and only if
\[
|\lambda^C|\neq 0 \implies |\alpha^C| = 0
\]
for all cyclic subgroups $C\subseteq G$. Thus it is a purely representation-theoretic condition that determines when $a_\lambda$-torsion is $\alpha$-periodic (of some possibly large period).
\tqed
\end{ex}

As a general statement this is quite satisfactory, but as a practical matter it gives no control over the equivalences $\Sigma^{n\alpha}X\simeq X$. More can be said after inverting some primes not dividing the order of $G$, and we make some comments about this situation in \cref{ssec:htpyequivreps}. A clean general statement is available when $G$ is a $p$-group, where we prove the following.

Fix a prime $p$ and finite $p$-group $G$. In this case, Bousfield localization $L_{KU_G/p}$ behaves similarly in $G$-equivariant homotopy theory to how $K(1)$-localization behaves in nonequivariant homotopy theory. In particular, if $\ell$ generates a dense subgroup of $\bbZ_p^\times/\{\pm 1\}$ and we set
\[
J_G = \Fib\left(\psi^\ell-\psi^1\colon (KO_G)_p^\wedge\rightarrow (KO_G)_p^\wedge\right),
\]
then $J_G\simeq L_{KU_G/p}S_G$, and more generally
\[
L_{KU_G/p}D(\Sigma^\infty_+Z)\simeq F(\Sigma^\infty_+Z,J_G)
\]
for any compact $G$-space $Z$ \cite[Corollary A.4.13]{balderrama2022total}. Write $j_{K(1)}^Z\colon RO(G)\rightarrow KO_G^0(Z)\rightarrow J_G^1(Z)$ for the resulting boundary map.

\begin{theorem}[\cref{thm:k1}]
Let $G$ be a finite $p$-group and $Z$ be a compact $G$-space. Given $\alpha \in RO(G)$, there exists an invertible element in $\pi_\alpha D(\Sigma^\infty_+ Z)_{(p)}$ if and only if $j_{K(1)}^Z(\alpha) = 0$.
\tqed
\end{theorem}

Thus the location of units in $\pi_\star^G D(\Sigma^\infty_+ Z)_{(p)}$ is completely determined by $K(1)$-local information.

In \cref{sec:examples} we work out several explicit examples. In particular we compute $\pi_{\ast\lambda}KO_G$ for a variety of finite groups $G$ and $G$-representations $\lambda$, producing explicit periodicities on $C(a_\lambda^n)$ and infinite periodic families in $\pi_\star S_G$.

\subsection*{Acknowledgements}

This work was supported by NSF RTG grant DMS-1839968.

\section{An \texorpdfstring{$RO(G)$}{RO(G)}-graded \texorpdfstring{$J$}{J}-homomorphism}\label{sec:jhomomorphism}

In this section we construct the equivariant $J$-homomorphism promised in the introduction. Throughout this section $G$ is a compact Lie group, and for simplicity we shall restrict our attention to compact $G$-spaces.

\subsection{Preliminaries}

We begin by fixing a bit of notation. Fix $F$ to be one of the real division algebras $\bbR$, $\bbC$, or $\bbH$. All vector spaces, vector bundles, and so forth are understood to be with respect to $F$. Write
\[
KF_G = \begin{cases} KO_G,&F=\bbR,\\ KU_G,&F = \bbC,\\ KSp_G,&F=\bbH\end{cases}
\]
for the $G$-spectrum representing the $K$-theory of $G$-equivariant vector bundles, and
\[
RF(G) =  \pi_0 KF_G
\]
for the corresponding Grothendieck group of $G$-representations.

If $Z$ is a compact $G$-space, then $KF_G^0(Z)$ can be identified explicitly as
\begin{align*}
KF_G^0(Z) = \{\text{$G$-equivariant vector bundles over $Z$}\} / (\sim),
\end{align*}
where
\[
\xi\sim\zeta\quad\text{when}\quad \xi\oplus V \cong \zeta\oplus V\text{ for some representation $V$}.
\]
The reduced $K$-group $\widetilde{KF}{}_G^0(Z)$, generally only defined when $Z$ is pointed, may be similarly identified as
\[
\widetilde{KF}{}_G^0(Z) = \{\text{$G$-equivariant vector bundles over $Z$}\} / (\approx),
\]
where
\[
\xi\approx\zeta\quad\text{when}\quad\xi\oplus V\cong \zeta\oplus W\text{ for some representations $V$ and $W$}.
\]
Write $SZ = S^0\ast Z$ for the unreduced suspension of $Z$, and $a_Z\colon S^0\rightarrow SZ$ for the inclusion of cone points. Then the restriction
\[
a_Z\colon \widetilde{KF}{}_G^0(SZ) \rightarrow \pi_0 KF_G \cong RF(G)
\]
sends a vector bundle $\xi$ over $SZ$ to to the difference $V-W$, where $V$ and $W$ are the restrictions of $\xi$ to the cone points $-1$ and $1$ respectively.

We separate out the following remark for easy reference.

\begin{rmk}\label{rmk:signs}
If $V$ is a $G$-representation, then the representation sphere $S^V$ is the pointed $G$-space defined as the one-point compactification of $V$. The suspension spectrum of $S^V$ is invertible as a $G$-spectrum, allowing one to construct virtual representation spheres $S^\alpha \in \Sp^G$ for any virtual representation $\alpha \in RO(G)$. However, whereas the assignment $V\mapsto S^V$ is natural in $V$, the $G$-spectrum $S^\alpha$ is generally defined only up to \textit{noncanonical} isomorphism. 

This issue arises from the fact that if $V$ is a $G$-representation, then $\Aut(V)$ may act nontrivially on $S^V$ in the homotopy category of $G$-spectra. If $V$ is defined over $F = \bbC$ or $\bbH$ and we consider only $F$-linear automorphisms, then in fact $\Aut(V)$ acts trivially on $S^V$ up to homotopy and no such issues arise. If $F = \bbR$ then we will sidestep this issue by taking the convention that, for our purposes, $S^\alpha$ and $\pi_\alpha(\bs)$ will only be \textit{defined} up to signs, where ``signs'' refers to the orthogonal units in $A(G)$, i.e.\ the image of tom Dieck's homomorphism
\[
j\colon RO(G)\rightarrow (\pi_0 S_G)^\times
\]
sending a $G$-representation $V$ to the class obtained by compactifying $-1\colon V\rightarrow V$. Unfortunately we see no way to avoid this while maintaining that the source of the $J$-homomorphism of \cref{thm:jhom} is $\widetilde{KO}{}_G^0(SZ)$, and not some more rigid but less uniformly computable object (such as $\Map^G(Z,L(V,W))$ for fixed $V$ and $W$).

We warn the reader that, having taken this convention, we will generally not take the care needed to pin down precise orientations and signs in intermediate arguments as they will not affect the final results.
\tqed
\end{rmk}

\subsection{Constructing the \texorpdfstring{$J$}{J}-homomorphism}

We now proceed to the construction. Fix a $G$-space $Z$ and vector bundle $\xi$ over $SZ$, and write $V$ and $W$ for the restrictions of $\xi$ to the cone points $-1$ and $1$. Let $L(V,W)$ denote the $G$-space of linear isometries $V\rightarrow W$, and write
\[
j\colon L(V,W)\rightarrow \Map_\ast(S^V,S^W)
\]
for the one-point compactification map. As $SZ$ is obtained by gluing two contractible spaces along a copy of $Z$, the vector bundle $\xi$ is classified by a clutching function
\[
\varphi_\xi\colon Z\rightarrow L(V,W),
\]
and the composite $j\circ\varphi_\xi\colon Z\rightarrow\Map_\ast(S^V,S^W)$ is adjoint to a map
\[
J(\xi)\colon \Sigma^V(Z_+)\rightarrow S^W.
\]
After stabilization, this defines a class
\[
J(\xi) \in \pi_{V-W}^GD(\Sigma^\infty_+Z)
\]
that we will denote by the same name.

\begin{theorem}\label{thm:jhom}
The above construction extends to a natural homomorphism
\[
J\colon \widetilde{KF}{}_G^0(SZ)\rightarrow \pi_\star^G D(\Sigma^\infty_+ Z)^\times,
\]
defined up to signs when $F = \bbR$, with the property that
\[
|J(\xi)| = \alpha \qquad\text{when}\qquad a_Z\xi = \alpha \in RF(G).
\]
\end{theorem}
\begin{proof}
If $Z_V$ is the trivial bundle $V\times Z \rightarrow Z$, then by construction $J(Z_V)\colon \Sigma^V(Z_+)\rightarrow S^V$ is the collapse map, adjoint to the unit $1 \in \pi_0 ^GD(\Sigma^\infty_+Z)$. Recall that if $\xi$ is any vector bundle over $SZ$, then there exists another vector bundle $\xi'$ for which $\xi\oplus\xi'\cong Z_V$ for some $G$-representation $V$ \cite[Proposition 2.4]{segal1968equivariant}. Therefore we may reduce to just verifying that $J(\xi\oplus \xi') = J(\xi)\cdot J(\xi')$ for any two vector bundles $\xi$ and $\xi'$ over $SZ$. Write
\[
\varphi_\xi\colon Z\rightarrow L(V,W),\qquad \varphi_{\xi'}\colon Z\rightarrow L(V',W')
\]
for clutching functions for $\xi$ and $\xi'$. Then a clutching function for $\xi\oplus\xi'$ is given by
\[
\varphi_{\xi\oplus\xi'}\colon Z\rightarrow L(V\oplus V',W\oplus W'),\qquad \varphi_{\xi\oplus\xi'}(z)(v,v') = (\varphi_\xi(z)(v),\varphi_{\xi'}(z)(v')).
\]
It follows that $J(\xi\oplus\xi')$ is the composite
\begin{center}\begin{tikzcd}[row sep=tiny]
\Sigma^{V\oplus V'}(Z_+)\ar[r,"\Delta"]&\Sigma^{V\oplus V'}((Z\times Z)_+)\ar[r,"\cong"]&\Sigma^V(Z_+)\wedge \Sigma^{V'}(Z_+)\\
{}\ar[r,"J(\xi)\wedge J(\xi')"]&S^W\wedge S^{W'}\ar[r,"\cong"]&S^{W\oplus W'}
\end{tikzcd},\end{center}
which exactly corresponds to $J(\xi)\cdot J(\xi')$. 
\end{proof}

\begin{ex}
There is a cofiber sequence $Z_+\rightarrow S^0\rightarrow SZ$, and precomposing $J$ with restriction along the boundary map $SZ\rightarrow\Sigma(Z_+)$ gives an equivariant $J$-homomorphism of signature
\[
KF_G^{-1}(Z) \rightarrow \pi_0 D(\Sigma^\infty_+ Z)^\times,
\]
as considered, for example, in \cite{segal1971equivariant}.
\tqed
\end{ex}

\begin{ex}
Write
\[
\tilde{J} = \partial\circ J \colon \widetilde{KF}{}_G^0(SZ)\rightarrow \pi_{\star-1}^GD(\Sigma^\infty SZ).
\]
If $\lambda$ is a $G$-representation and $Z = S^{\lambda+1}$, then $a_Z = 0$ and $\tilde{J}$ recovers the equivariant $J$-homomorphism of signature
\[
\pi_{\lambda+2}KF_G\rightarrow\pi_{\lambda+1} S_G,
\]
as considered, for example, in \cite{loffler1977equivariant,crabb1980z2,minami1983equivariant}.
\tqed
\end{ex}

\begin{rmk}\label{rmk:tildej}
In general, if $Z$ is $G$-pointed then $SZ\simeq \Sigma Z$ and $a_Z = 0$, so the $J$-homomorphism is of the form
\[
\widetilde{KF}{}_G^{-1}(Z)\rightarrow \pi_0^G D(\Sigma^\infty_+ Z).
\]
The basepoint of $Z$ induces a splitting $D(\Sigma^\infty_+ Z)\simeq S_G\oplus D(\Sigma^\infty Z)$, and so we may project onto the latter summand to obtain
\[
\tilde{J} = \partial\circ J\colon \widetilde{KF}{}_G^{-1}(Z)\rightarrow \pi_0^G D(\Sigma^\infty Z).
\]
Note that this need not be a group homomorphism, though it is if $Z$ is $G$-connected. On the other hand, one can also consider the (perhaps more familiar) $J$-homomorphism
\[
J'\colon \widetilde{KF}{}_G^{-1}(Z)\rightarrow \pi_0^G D(\Sigma^\infty Z)
\]
sending a \textit{pointed} map $\phi\colon Z\rightarrow L(V,V)$ to the adjoint $\Sigma^VZ\rightarrow S^V$ of the composite $j\circ\phi\colon Z\rightarrow\Map_\ast(S^V,S^V)$. If $Z$ is $G$-connected then $J' = \tilde{J}$, but they can differ in general, as the next example demonstrates.
\tqed
\end{rmk}

\begin{ex}
Take $Z = S^0$. The homomorphism $j\colon RO(G)\rightarrow A(G)^\times$ indicated in \cref{rmk:signs} factors through the surjection $\eta\colon RO(G) = \pi_0 KO_G\rightarrow \pi_1 KO_G$, and we have
\[
J'(\eta\cdot \alpha) = j(\alpha),\qquad \tilde{J}(\eta\cdot\alpha) = \pm(1-j(\alpha)).
\]
For example $\tilde{J}(\eta) = \pm 2$, and if $\sigma$ is a $1$-dimensional representation with index $2$ kernel $K\subset G$ then $\tilde{J}(\eta\cdot\sigma) = \pm \tr_K^G(1)$.
\tqed
\end{ex}

\section{The equivariant Atiyah--Hirzebruch spectral sequence}\label{sec:ahss}

If $Z$ is a $G$-complex and $R$ is a $G$-spectrum, then there is an \textit{Atiyah--Hirzebruch spectral sequence}
\[
E_2^{\alpha,f}(Z;R) = H^f_G(Z;\ul{\pi}_{\alpha+f} R) \Rightarrow R^{-\alpha}_G(Z),
\]
with $E_2$-page the Bredon cohomology of $Z$ with coefficients in the Mackey functor $\ul{\pi}_\star R$.

\begin{ex}
For $Z = EG$ one obtains the \textit{homotopy fixed point spectral sequence}
\[
E_2^{\alpha,f} = H^f(G;\pi_{\alpha+f}^e R) \Rightarrow  \pi_\alpha^G F(EG_+,R)\cong \pi_{0} \left(F(S^{\alpha},R)^{\h G}\right),
\]
with $E_2$-page the group cohomology of $G$ with coefficients in the underlying homotopy groups $\pi_{\star}^e R$.
\tqed
\end{ex}

In this section we apply the $J$-homomorphism constructed in \cref{sec:jhomomorphism} to obtain universal periodicities and differentials in these spectral sequences.

\subsection{A technical lemma}

We will need a technical lemma. Let $p\colon Z\rightarrow T$ be a map of $G$-spaces with a homotopy retraction $i\colon T\rightarrow Z$, so that $p\circ i\simeq \text{id}_T$. In particular, there is an equivalence
\[
T/Z\simeq S(Z/T).
\]
Write
\[
q\colon Z\rightarrow Z/T,\qquad \partial\colon S(Z/T)\simeq T/Z\rightarrow SZ
\]
for the canonical maps, and note that
\[
Sq\circ \partial\colon S(Z/T)\rightarrow SZ\rightarrow S(Z/T)
\]
is an equivalence. The following diagram of coCartesian squares may help illustrate the situation:
\begin{center}\begin{tikzcd}
T\ar[r]\ar[d]&Z\ar[r]\ar[d]&T\ar[r]\ar[d]&\ast\ar[d]\\
\ast\ar[r]&Z/T\ar[r]\ar[d]&\ast\ar[r]\ar[d]&ST\ar[r]\ar[d]&\ast\ar[d]\\
&\ast\ar[r]&S(Z/T)\simeq T/Z\ar[r]&SZ\ar[r]&SZ/ST\simeq S(Z/T)
\end{tikzcd}.\end{center}

The retraction $p\circ i \simeq \text{id}_T$ induces a stable splitting
\[
D(\Sigma^\infty_+ Z)\simeq D(\Sigma^\infty_+ T)\oplus D(\Sigma^\infty Z/T).
\]
Here, $D(\Sigma^\infty Z/T)$ is a module over $D(\Sigma^\infty_+ Z)$, and thus over $D(\Sigma^\infty_+ T)$ by restriction along $p$. We now have the following.

\begin{lemma}\label{lem:jr}
If $\xi \in \widetilde{KO}{}_G^0(SZ)$, then under the above splitting we have
\[
J(\xi) = (J(Si^\ast\xi),~\tilde{J}(\partial^\ast(\xi))\cdot J(Si^\ast \xi)),
\]
where $\tilde{J}$ is as in \cref{rmk:tildej}.
\end{lemma}
\begin{proof}
The relevant diagram is
\begin{center}\begin{tikzcd}
\widetilde{KO}{}_G^0(S(Z/T))\ar[r,"J"]\ar[d,"Sq^\ast",shift left=1mm]&\pi_0 D(\Sigma^\infty_+ Z/T)\ar[r,"\simeq"]\ar[d,"q^\ast",shift left=1mm]&\pi_0 \left(S_G\oplus D(\Sigma^\infty Z/T)\right)\ar[d,"\iota\oplus \text{id}"]\\

\widetilde{KO}{}_G^0(SZ)\ar[d,"Si^\ast",shift left=1mm]\ar[r,"J"]\ar[u,"\partial^\ast",dashed,shift left=1mm]&\pi_\star  D(\Sigma^\infty_+ Z)\ar[r,"\simeq"]\ar[d,"i^\ast",shift left=1mm]&\pi_\star\left( D(\Sigma^\infty_+ T)\oplus D(\Sigma^\infty Z/T)\right)\ar[dl,"\text{pr}_1",shift left=1mm]\\

\widetilde{KO}{}_G^0(ST)\ar[r,"J"]\ar[u,"Sp^\ast",shift left=1mm]&\pi_\star D(\Sigma^\infty_+ T)\ar[u,"p^\ast",shift left=1mm]\ar[ur,"\text{in}_1",shift left=1mm]
\end{tikzcd}.\end{center}
First suppose $\xi = Sp^\ast\zeta$ for some $\zeta \in \widetilde{KO}{}_G^0(ST)$. In this case we have
\[
J(\xi) = J( Sp^\ast\zeta) = p^\ast J(\zeta) = (J(\zeta),0) = (J(Si^\ast\xi),0)
\]
as claimed. Next suppose $Si^\ast \xi = 0$. In this case $\xi = Sq^\ast \partial^\ast \xi$ and thus
\[
J(\xi) = J(Sq^\ast\partial^\ast\xi) = q^\ast J(\partial^\ast\xi).
\]
Under the splitting
\[
D(\Sigma^\infty_+ Z/T)\simeq S_G\oplus D(\Sigma^\infty Z/T)
\]
we have by definition
\[
J(\partial^\ast\xi) = (1,\tilde{J}(\partial^\ast\xi)),
\]
and thus
\[
q^\ast J(\partial^\ast\xi) = q^\ast (1,\tilde{J}(\partial^\ast\xi)) = (1,\tilde{J}(\partial^\ast\xi)).
\]
Finally, for general $\xi$ we combine the preceding cases to compute
\begin{align*}
J(\xi) &= J(\xi - Sp^\ast Si^\ast\xi) \cdot J(Sp^\ast Si^\ast \xi) \\
&= (1,\tilde{J}(\partial^\ast(\xi - Sp^\ast Si^\ast\xi))) \cdot (J(Si^\ast Sp^\ast Si^\ast \xi),0) \\
&= (1,\tilde{J}(\partial^\ast \xi))\cdot (J(Si^\ast \xi),0) = (J(Si^\ast\xi), \tilde{J}(\partial^\ast \xi)\cdot J(Si^\ast\xi))
\end{align*}
as claimed. 
\end{proof}

\subsection{Periodicities and differentials in the equivariant AHSS}
A \textit{$G$-complex} is a filtered $G$-space
\[
Z = \colim_{n\rightarrow\infty}Z^{\leq n},
\]
where $Z^{\leq n}$ is obtained from $Z^{<n}$ by attaching $n$-cells. For our purposes, this will mean that we have specified $G$-sets $I_n$ for each $n\geq 0$, together with pushout squares
\begin{center}\begin{tikzcd}
S^{n-1}\times I_n\ar[r]\ar[d]&Z^{<n}\ar[d]\\
D^n\times I_n\ar[r]&Z^{\leq n}
\end{tikzcd}\end{center}
for $n\geq 0$. Note in particular
\[
Z^{\leq n}/Z^{<n}\simeq (D^n\times I_n)/(S^{n-1}\times I_n)\simeq \Sigma^n(I_{n+}).
\]
For a $G$-set $I$ and $G$-spectrum $R$, write
\[
(\ul{\pi}_\star R)(I) = \pi_\star^G F(\Sigma^\infty_+ I,R)
\]
for the evaluation of the Mackey functor $\ul{\pi}_\star R$ on $I$. Then the \textit{Atiyah--Hirzebruch spectral sequence}
\[
E_1^{\alpha,f}(Z;R) = (\ul{\pi}_{\alpha+f}R)(I_f)\Rightarrow R^{-\alpha}_G(Z)
\]
is the spectral sequence associated to the filtration 
\[
F(\Sigma^\infty_+Z,R)\simeq \lim_{f\rightarrow\infty}F(\Sigma^\infty_+ Z^{<f},E).
\]
The $E_2$-page is given by the Bredon cohomology groups
\[
E_2(Z;R) = H^\ast_G(Z;\ul{\pi}_\star R).
\]
If $f\geq 1$ then the $J$-homomorphism defines a map
\[
\tilde{J}\colon E_1^{0,f}(Z;KO_G) = (\ul{\pi}_f KO_G)(I_f)\rightarrow (\ul{\pi}_{f-1} S_G)(I_f) = E_1^{-1,f}(Z;S_G).
\]
In general $E_r(Z;R)$ is a module over $E_r(Z;S_G)$, and we have the following.

\begin{theorem}\label{thm:ahss}
Fix $\alpha \in RO(G)$, and suppose that the image of $\alpha$ in $KO_G^0(Z)$ is detected by $b\in E_1^{0,f}(Z;KO_G)$ with $f\geq 1$. Then there exists an invertible element $t_\alpha \in E_1^{\alpha,0}(Z;R)$ which survives to the $E_f$-page, whereupon
\[
d_f(t_\alpha) = \pm \tilde{J}(b) t_\alpha
\]
in $E_1^{\alpha-1,f}(Z;R)$, where $\pm$ refers to signs in the sense of \cref{rmk:signs}.
\end{theorem}
\begin{proof}
We may as well suppose $R = S_G$. The relevant diagram is the following:
\begin{center}\begin{tikzcd}[column sep=1mm, row sep=tiny]
\widetilde{KO}{}_G^0(S(S^{f-1}\wedge I_{f+}))\ar[rr,equals]\ar[dr,"J"]\ar[dd, shift left=1mm]&&(\ul{\pi}_f KO_G)(I_f)\ar[r,"\tilde{J}"]\ar[from=dd, "\partial", dashed]&(\ul{\pi}_{f-1}S_G)(I_f)\\

&\pi_\star D((S^{f-1}\wedge I_{f+})_+)
&&\pi_\star D(\Sigma^{f-1}I_{f+})\\

\widetilde{KO}{}_G^0(S(S^{f-1}\times I_f))\ar[dd, shift left=1mm, "Si^\ast"]\ar[dr,"J"]\ar[uu,"\partial^\ast",shift left=1mm, dashed]&&\widetilde{KO}{}_G^0(SZ^{<f})\ar[dr,"J"]\ar[from=dd]\ar[ll, "Se^\ast"', near start]\\

&\pi_\star D((S^{f-1}\times I_f)_+)\ar[from=uu, crossing over, shift left=1mm]\ar[dd,shift left=1mm]
&&\pi_\star D(Z^{<f}_+)\ar[ll, crossing over,"e^\ast"', near end]\ar[uu,dashed,"\partial"]\\

\widetilde{KO}{}_G^0(SI_f)\ar[dr,"J"]\ar[uu,shift left=1mm]&&\widetilde{KO}{}_G^0(SZ^{\leq f})\ar[dr,"J"]\ar[ll]\\

&\pi_\star D(I_{f+})&&\pi_\star D(Z^{\leq f}_+)\ar[uu]\ar[ll]
\end{tikzcd}\end{center}

Here we have abbreviated $D(X) = D(\Sigma^\infty X)$ for a pointed $G$-space $X$. 

As $\alpha$ is detected by $b\in E_1^{0,f}(Z;KO_G)$, there exists $\tilde{b} \in \widetilde{KO}{}^0_G(SZ^{<f})$ satisfying $\partial(\tilde{b}) = b$ and restricting to $\alpha$ along $S^0\rightarrow SZ^{<f}$. Applying the $J$-homomorphism we obtain an invertible element
\[
J(\tilde{b}) \in \pi_\alpha^G D(\Sigma^\infty_+ Z^{< f}).
\]
We take $t_\alpha$ to be the image of $J(\tilde{b})$ under restriction along $I_0 = Z^{\leq 0}\rightarrow Z^{< f}$; we will also abuse notation by writing the same for the image of $J(\tilde{b})$ under restriction along the map $i_f\colon I_f\rightarrow Z^{< f}$. Then to say that $d_f(t_\alpha) = \tilde{J}(b)\cdot t_\alpha$ is to say that $\partial(J(\tilde{b})) = \tilde{J}(b)\cdot t_\alpha$.

Under the splitting
\[
D((S^{f-1}\times I_{f+})_+)\simeq D(I_{f+}) \oplus D(\Sigma^{f-1}I_{f+})
\]
we may identify
\[
e^\ast(x) = (i_f^\ast(x),\partial(x))
\]
for any $x\in \pi_\star D(Z^{<f}_+)$. Using \cref{lem:jr}, we compute
\begin{align*}
e^\ast J(\tilde{b}) = J(Se^\ast\tilde{b}) 
&= (J(Si^\ast (Se)^\ast \tilde{b}),~ \tilde{J}(\partial^\ast Se^\ast \tilde{b})\cdot J(Si^\ast Se^\ast \tilde{b})).
\end{align*}
Here, by construction we have $\partial^\ast (Se)^\ast \tilde{b} = b$ and $J(Si^\ast (Se)^\ast\tilde{b}) = t_\alpha$, implying that
\[
e^\ast J(\tilde{b}) = (t_\alpha,\tilde{J}(b)\cdot t_\alpha),
\]
and so $\partial(J(\tilde{b})) = \tilde{J}(b)t_\alpha$ as claimed.
\end{proof}

\section{Equivariant \texorpdfstring{$t_\alpha$}{t\_alpha}-self maps and \texorpdfstring{$t_\alpha$}{t\_alpha}-elements}\label{sec:bordism}

This section contains our study of $t_\alpha$-elements and $t_\alpha$-self maps, connecting the periodicities produced by the $J$-homomorphism of \cref{sec:jhomomorphism} to periodicities in equivariant cobordism.

\subsection{Equivariant cobordism and \texorpdfstring{$t_\alpha$}{t\_alpha}-elements}\label{ssec:cobordism}

We begin by recalling the definitions needed to make sense of $t_\alpha$-elements. Let $F$ denote one of the real division algebras $\bbR$, $\bbC$, or $\bbH$, and write
\[
MF_G = \begin{cases}
MO_G,&F=\bbR,\\
MU_G,&F=\bbC,\\
MSp_G,&F=\bbH
\end{cases}
\]
for the corresponding $G$-equivariant homotopical cobordism spectrum. This was originally constructed by tom Dieck \cite{dieck1970bordism} when $F = \bbC$, and the construction works just as well for $F = \bbR$ and $F = \bbH$. To be precise tom Dieck constructs $MF_G$ as a $G$-equivariant cohomology theory; a good construction of $MF_G$ as a $G$-spectrum suitable for our purposes may be found in \cite{sinha2001computations}. The $G$-spectra $MF_G$ are compatible as $G$ varies, and assemble together to form a highly structured globally equivariant ring spectrum. We refer the reader to \cite[Chapter 6]{schwede2018global} for a careful construction of $MF_G$ which incorporates this additional structure.

A vector bundle $\gamma$ over a compact $G$-space $Z$ has associated sphere bundle $S(\gamma)$ and disk bundle $D(\gamma)$, and the \textit{Thom space} of $\gamma$ is given by
\[
\Th(\gamma) = D(\gamma)/S(\gamma).
\]
This is a pointed $G$-space, and we will generally not distinguish between it and its suspension spectrum. Associated to $\gamma$ is a \textit{Thom class}
\[
t_\gamma \in \widetilde{MF}{}_G^{-|\gamma|}\Th(\gamma),
\]
cupping with which induces the \textit{Thom isomorphism}
\[
\widetilde{MF}{}_G^\star\Th(\gamma)\cong MF_G^{\star+|\gamma|}(Z).
\]
Moreover, $MF_G$ is the universal $G$-spectrum with such Thom isomorphisms \cite{okonek1982connerfloyd}. 

We only need the simplest example of a Thom class. Given a $G$-representation $V$, we write 
\[
t_V \in \pi_{V-|V|}MF_G
\]
for the Thom class of $V$ considered as a vector bundle over a point, noting that $\Th(V) = S^V$. This satisfies
\[
t_{V\oplus W} = t_V t_W,\qquad t_{F^n} = 1,
\]
and has inverse the orientation class $u_V\in \pi_{|V|-V}MF_G$. Moreover $t_V$ depends only on the isomorphism class of $V$, allowing for the following definition.

\begin{defn}
The \textit{Thom class} $t_\alpha \in \pi_{\alpha-|\alpha|}MF_G$ of a virtual $G$-representation $\alpha = V-W$ is defined as $t_\alpha = t_V t_W^{-1}$.
\tqed
\end{defn}

From now on, we shall assume that $\alpha$ has virtual dimension zero. This loses no real generality for our purposes, and is convenient as it ensures $|t_\alpha| = \alpha$. 

\begin{defn}\label{def:talpha}
A \textit{$t_\alpha$-element} of order $n>0$ in a $G$-ring spectrum $R$ is an invertible element $t \in \pi_{n\alpha}^G R$ lifting $t_\alpha^n$ under the Hurewicz map $R\rightarrow R \otimes MU_G$.
\tqed
\end{defn}

The condition that a $t_\alpha$-element is invertible is automatic if, for example, $R$ is $MF_G$-local. We include it as our examples satisfy it.

\begin{defn}
A \textit{$t_\alpha$-self map} of order $n\geq 1$ on a $G$-spectrum $X$ is an equivalence
\[
f\colon \Sigma^{n\alpha}X\rightsim X
\]
which induces multiplication by $t_\alpha^n$ after smashing with $MF_G$, that is for which $MF_G\otimes f = t_\alpha^n \otimes X$ as self maps of $MF_G\otimes X$.
\tqed
\end{defn}

%We take the convention that an $RF(G)$-graded suspension of a $t_{\alpha}$-self map is still a $t_{\alpha}$-self map. Thus, for example, the dual of a $t_\alpha$-self map on a finite $G$-spectrum is still a $t_\alpha$-self map, and if $f$ is a $t_\alpha$-self map then $f^{-1}$ is a $t_{-\alpha}$-self map. 

We record the following for easy reference.

\begin{lemma}\label{lem:closure}
If $t\in \pi_{n\alpha}R$ is a $t_\alpha$-element and $M$ is an $R$-module, then multiplication by $t$ defines a $t_\alpha$-self map $\Sigma^{n\alpha}M\rightarrow M$. Conversely, every $R$-linear $t_\alpha$-self map $\Sigma^{n\alpha}R\rightarrow R$ is given by multiplication with a $t_\alpha$-element.
\end{lemma}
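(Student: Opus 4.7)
The plan is to unpack the definitions on each side. For the first claim, fix a $t_\alpha$-element $t\in\pi_{n\alpha}R$, and consider the $R$-linear map $t\colon\Sigma^{n\alpha}M\to M$ given by multiplication. Since $t$ is an invertible element of the (graded) ring $\pi_\star R$, multiplication by $t^{-1}\in\pi_{-n\alpha}R$ gives a two-sided inverse on $M$, so this map is an equivalence. To check it is a $t_\alpha$-self map, I would smash with $MF_G$: the module $MF_G\otimes M$ is an $(MF_G\otimes R)$-module, and the composite $\pi_{n\alpha}R\to\pi_{n\alpha}(MF_G\otimes R)$ sends $t$ to $t_\alpha^n$ by the defining property of a $t_\alpha$-element. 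Hence $MF_G\otimes (\text{mult by } t)$ is multiplication by $t_\alpha^n$, as required.

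For the converse, I would use the standard identification of $R$-linear maps $\Sigma^{n\alpha}R\to R$ with $\pi_{n\alpha}R$, sending a map $f$ to its evaluation on the unit $1\in\pi_0 R$; call this class $t_f\in\pi_{n\alpha}R$. Under this correspondence, $f$ is precisely multiplication by $t_f$. Then $f$ is an equivalence if and only if $t_f$ is invertible (an inverse equivalence $f^{-1}$ supplies an inverse element, and conversely), and $f$ becomes multiplication by $t_\alpha^n$ after smashing with $MF_G$ if and only if the image of $t_f$ in $\pi_{n\alpha}(MF_G\otimes R)$ equals $t_\alpha^n$. These are exactly the two conditions defining a $t_\alpha$-element, so $f$ is a $t_\alpha$-self map iff $t_f$ is a $t_\alpha$-element.

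There is no real obstacle here; the only mild subtlety is tracking the module structure on $MF_G\otimes M$ carefully to justify that the image of multiplication by $t$ is multiplication by the image of $t$. Both directions then reduce to the observation that the map $\pi_\star R\to\End_R(R)_\star$ sending an element to multiplication by it is an isomorphism of graded rings, compatible with base change along $R\to MF_G\otimes R$.
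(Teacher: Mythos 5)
Your proposal is correct and matches the paper, whose proof is simply ``Immediate from the definitions''; you have just unpacked those definitions (invertibility gives the equivalence, the lifting property gives compatibility with $MF_G$, and $R$-linear endomorphisms of $R$ correspond to elements of $\pi_\star R$). No gaps.
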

\begin{proof}
Immediate from the definitions.
\end{proof}

\subsection{Vector bundles and \texorpdfstring{$K$}{K}-theory}\label{ssec:k}

Let $Z$ be a compact $G$-space. We now relate the study of $t_\alpha$-elements in $D(\Sigma^\infty_+Z)$ to the study of vector bundles on $Z$.

Given a $G$-representation $V$, write $Z_V$ for the vector bundle $V\times Z \rightarrow Z$. This has Thom spectrum
\[
\Th(Z_V)\simeq \Sigma^V\Sigma^\infty_+Z.
\]
In particular, a stable equivalence $Z_V\cong Z_W$ of vector bundles over $Z$ induces an equivalence $\Sigma^V\Sigma^\infty_+ Z \simeq \Sigma^W\Sigma^\infty_+ Z$. The basic observation is that this is determined by a $t_{V-W}$-element in $D(\Sigma^\infty_+ Z)$, and that such elements are parametrized by the $J$-homomorphism of \cref{sec:jhomomorphism}, as we now explain.

If $\xi$ is a vector bundle over $Z$, then the Thom diagonal $ \Th(\xi)\rightarrow \Th(\xi)\otimes \Sigma^\infty_+ Z$ transposes to make $\Th(\xi)$ into a module over the Spanier--Whitehead dual $D(\Sigma^\infty_+ Z)$. When $\xi = Z_V$, one obtains the usual module structure on $\Th(Z_V) = \Sigma^V\Sigma^\infty_+Z$. In particular, a map $Z_V\rightarrow Z_W$ of vector bundles induces a $D(\Sigma^\infty_+Z)$-linear map $\Sigma^V\Sigma^\infty_+Z\rightarrow\Sigma^W\Sigma^\infty_+Z$. By duality, such a map is given by capping with an element of $\pi_{V-W}^GD(\Sigma^\infty_+Z)$.

\begin{rmk}
Explicitly, given a map $f\colon \Sigma^V\Sigma^\infty_+Z\rightarrow \Sigma^W\Sigma^\infty_+Z$ one defines $t\in\pi_{V-W}^GD(\Sigma^\infty_+Z)$ to be the composite
\begin{center}\begin{tikzcd}
\Sigma^V\Sigma^\infty_+Z\ar[r,"f"]&\Sigma^W\Sigma^\infty_+Z\ar[r]&\Sigma^W\Sigma^\infty_+(\ast)\ar[r,"\simeq"]&S^W
\end{tikzcd},\end{center}
and when $f$ is $D(\Sigma^\infty_+Z)$-linear it may be recovered from $t$ as the composite
\begin{equation*}\begin{tikzcd}
\Sigma^V\Sigma^\infty_+Z\ar[r,"\Sigma^V\Delta"]&\Sigma^V\Sigma^\infty_+Z\otimes\Sigma^\infty_+Z \ar[r,"t\otimes \Sigma^\infty_+Z"]&\Sigma^W\Sigma^\infty_+Z
\end{tikzcd}.\eqno\triangleleft\end{equation*}
\end{rmk}

We can now give the following.

\begin{lemma}\label{lem:fiberwise}
Fix an equivalence $Z_V\rightsim Z_W$ of vector bundles over $Z$ with associated equivalence $f\colon \Sigma^V\Sigma^\infty_+ Z \rightsim\Sigma^W\Sigma^\infty_+Z$ of Thom spectra. Then $f$ is a $t_{V-W}$-self map and the associated class $t\in \pi_{V-W}^GD(\Sigma^\infty_+Z)$ is a $t_{V-W}$-element.
\end{lemma}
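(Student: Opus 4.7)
The plan is to show that after smashing with $MF_G$, the map $f$ becomes multiplication by $t_{V-W}$; the first claim is then immediate and the second follows from \cref{lem:closure} once I check that $f$ is $D(\Sigma^\infty_+Z)$-linear. The core input is the naturality of Thom classes from \cref{lem:thomproperties}.

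First I would identify the Thom classes of the trivial bundles: writing $p\colon Z\to\ast$, the bundle $Z_V$ is the pullback $p^\ast V$, so \cref{lem:thomproperties}(1) gives $t_{Z_V} = t_V\circ\Sigma^V p_+$ as a map $\Sigma^V\Sigma^\infty_+Z\to\Sigma^{|V|}MF_G$, and likewise $t_{Z_W} = t_W\circ\Sigma^W p_+$. Since $\phi$ is a bundle isomorphism over $Z$, the classifying map for $Z_V$ factors through $\phi$, and the same argument that proves \cref{lem:thomproperties}(1) then gives $f^\ast t_{Z_W} = t_{Z_V}$. Because $\phi$ covers the identity on $Z$, moreover, $f$ respects the Thom diagonals, so smashing with $MF_G$ upgrades the above cohomology-class identity to a commutative diagram
\[
\tau_W\circ (MF_G\otimes f) = \tau_V
\]
of $MF_G$-linear equivalences, where $\tau_V,\tau_W$ are the Thom equivalences $MF_G\otimes\Sigma^V\Sigma^\infty_+Z\rightsim MF_G\otimes\Sigma^{|V|}\Sigma^\infty_+Z$ and similarly for $W$. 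For trivial bundles these simplify to multiplication by $t_V$ or $t_W$ on the sphere factor. Solving yields $MF_G\otimes f = \tau_W^{-1}\circ\tau_V = (u_Wt_V)\otimes\Sigma^\infty_+Z = t_{V-W}\otimes\Sigma^\infty_+Z$ by \cref{lem:thomor} together with the definition $t_{V-W} = t_Vu_W$, which is the statement that $f$ is a $t_{V-W}$-self map.

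For the second claim, the Thom-diagonal compatibility noted above translates under Spanier--Whitehead duality to $D(\Sigma^\infty_+Z)$-linearity of $f$. By \cref{lem:closure}, $f$ is then multiplication by some element of $\pi_{V-W}D(\Sigma^\infty_+Z)$, and direct comparison with the composite defining $t$ identifies this element as $t$ itself. The first part now forces $t$ to be a $t_{V-W}$-element. The main obstacle I anticipate is careful bookkeeping: verifying that the trivial-bundle Thom equivalence really has the explicit form ``multiplication by $t_V$ on the sphere factor'', and translating cleanly among the cohomology-class, Thom-equivalence, and module-map descriptions of the same data. Sign ambiguities from \cref{rmk:signs} pose no difficulty since they act trivially on $MF_G$-theory.
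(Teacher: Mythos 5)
Your proposal is correct and rests on the same ingredients as the paper's proof: naturality of Thom classes (\cref{lem:thomproperties}), the relation $u_W = t_W^{-1}$ from \cref{lem:thomor}, and the duality/linearity discussion together with \cref{lem:closure}. The only difference is packaging — you verify the identity $MF_G\otimes f = t_{V-W}\otimes \Sigma^\infty_+Z$ at the level of Thom isomorphism equivalences and deduce the element statement afterwards, whereas the paper runs the argument in the opposite order via the one-line cohomology computation $f^\ast(1) = u_W f^\ast(t_W) = u_W t_V = t_{V-W}$ identifying the $MF_G$-Hurewicz image of $t$ — so this is essentially the same proof.
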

\begin{proof}
By the above discussion and \cref{lem:closure}, it suffices to show that $t$ is a $t_{V-W}$-element. By definition $t = f^\ast(1)$, where $f^\ast\colon MF_G^\ast(\Sigma^W\Sigma^\infty_+ Z)\rightarrow MF_G^\ast(\Sigma^V\Sigma^\infty_+ Z)$. By $MF_G^\star$-linearity of $f^\ast$ and naturality of Thom classes, we can identify
\[
t = f^\ast(1) = f^\ast(t_W^{-1}t_W) = t_W^{-1}f^\ast(t_W) = t_W^{-1}t_{f^\ast W} = t_W^{-1}t_V = t_{V-W}
\]
in $MF_G^{W-V}(Z)$, and so $t$ is a $t_{V-W}$-element as claimed.
\end{proof}

\begin{prop}\label{prop:jt2}
Consider the $J$-homomorphism
\[
J\colon \widetilde{KF}{}_G^0(SZ)\rightarrow \pi_\star^G D(\Sigma^\infty_+ Z)^\times.
\]
If $\xi \in \widetilde{KF}{}_G^0(SZ)$ satisfies $a_Z \xi = \alpha \in RF(G)$, then $J(\xi) \in \pi_\alpha^G D(\Sigma^\infty_+ Z)$ is a $t_\alpha$-element.
\end{prop}
\begin{proof}
Write $\alpha = V-W$ for two $G$-representations $V$ and $W$. By construction, $J(\xi)$ is obtained by Thomifying a stable equivalence $Z_{V}\simeq Z_{W}$, i.e.\ an equivalence of vector bundles $Z_{V\oplus U}\simeq Z_{W\oplus U}$ for some $G$-represenation $U$. It follows from \cref{lem:fiberwise} that $J(\xi)$ is a $t_{(V\oplus U)-(W\oplus U)} = t_\alpha$-element as claimed.
\end{proof}

\begin{cor}\label{cor:existk}
There exists a $t_\alpha$-element in $\pi_\alpha D(\Sigma^\infty_+ Z)$ provided $\alpha$ is in the kernel of $RF(G)\rightarrow KF_G^0(Z)$.
\end{cor}
\begin{proof}
By the cofiber sequence $Z_+\rightarrow S^0\rightarrow SZ$, if $\alpha\in\ker(RF(G)\rightarrow KF_G^0(Z))$ then $\alpha = a_Z b$ for some $b\in \widetilde{KF}{}_G^0(SZ)$, and by \cref{prop:jt2} $J(b) \in \pi_\alpha^G D(\Sigma^\infty_+ Z)$ is our desired $t_\alpha$-element.
\end{proof}

\subsection{Character theory}

Suppose that $G$ is finite. In this case, we can use \cref{cor:existk} to give criteria for $D(\Sigma^\infty_+ Z)$ to admit a $t_\alpha$-element of some order based only on the isotropy groups of $Z$. We will extend this to arbitrary $G$-ring spectra in \cref{ssec:maint} below.

We require some general equivariant localization theory.

\begin{lemma}\label{lem:macksplit}
Write $\Mack_G$ for the category of $G$-Mackey functors, and for $H\subset G$ write $W_GH = N_GH/H$ for the Weyl group. Then there are functors
\[
\tau_H\colon \Mack_G\rightarrow\Mod_{\bbZ[W_GH]},\quad \tau_H \ul{M} = \coker\left(\tr\colon \bigoplus_{K\subsetneq H}\ul{M}(G/K)\rightarrow \ul{M}(G/H)\right)
\]
for which the following hold:
\begin{enumerate}
\item For any $G$-Mackey functor $\ul{M}$, there is a natural splitting
\[
\bbZ[\tfrac{1}{|G|}]\otimes \ul{M}(G)\cong \bbZ[\tfrac{1}{|G|}]\otimes \bigoplus_{(H)}(\tau_H \ul{M})^{W_GH},
\]
this sum being over the conjugacy classes of subgroups of $G$.
\item If $E$ is a $G$-spectrum, then the geometric fixed point maps $\phi_H\colon \pi_0^G E\rightarrow \pi_0 \Phi^H E$ factor through $\tau_H (\ul{\pi}_0 E)$, and induce isomorphisms
\[
\bbZ[\tfrac{1}{|G|}]\otimes \tau_H (\ul{\pi}_0 E)\cong \bbZ[\tfrac{1}{|G|}]\otimes \pi_0 \Phi^H E.
\]
\end{enumerate}
\end{lemma}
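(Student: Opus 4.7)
The plan is to use Burnside-ring idempotents. After inverting $|G|$, the Burnside ring decomposes as $A(G)[1/|G|] \cong \prod_{(H)} \bbZ[1/|G|]$, with orthogonal primitive idempotents $e_H$ indexed by conjugacy classes of subgroups $H \subset G$ and characterized by $\phi^K(e_H) = \delta_{(H),(K)}$ under the mark homomorphisms. Any Mackey functor $M$ admits a natural $A(G)$-action on its value at $G/G$, so these idempotents produce a canonical splitting
\[
M(G/G)[1/|G|] \cong \bigoplus_{(H)} e_H \cdot M(G/G)[1/|G|].
\]
The main content of part (1) is to identify the $H$-summand with $(\tau_H M)^{W_GH}[1/|G|]$. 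The natural map $M(G/G) \to (\tau_H M)^{W_GH}$ is the composite of restriction $\res^G_H$ with the quotient to $\tau_H M$; the Mackey double-coset formula ensures its image lies in $W_GH$-invariants. Showing that this is an isomorphism on the $H$-summand reduces by naturality to the representable Mackey functors, where it becomes a direct Burnside-ring calculation.

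For part (2), the factorization of $\phi_H$ through $\tau_H(\ul\pi_0 E)$ follows from two properties of geometric fixed points. First, $\Phi^H(G/K_+) \simeq \ast$ whenever no conjugate of $H$ is contained in $K$, so $\Phi^H$ annihilates transfers from proper subgroups of $H$: for $K \subsetneq H$ and $x \in \pi_0^K E$, one has $\phi_H(\tr_K^H x) = 0$. Consequently the composite $\pi_0^G E \to \pi_0^H E \to \pi_0 \Phi^H E$ of restriction and geometric fixed points kills the image of $\tr\colon \bigoplus_{K \subsetneq H} \pi_0^K E \to \pi_0^H E$ and descends to a natural map $\tau_H(\ul\pi_0 E) \to \pi_0 \Phi^H E$. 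To promote this to an isomorphism after inverting $|G|$, apply the splitting from part (1) to $M = \ul\pi_0 E$ and compare with the tom Dieck splitting of $\pi_0^G E[1/|G|]$ across the geometric fixed point functors $\Phi^H$; matching $H$-summands gives the claim.

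The main obstacle is the identification in part (1): extracting, from Burnside idempotents defined by mark formulas, a purely algebraic description in terms of the combinatorial cokernel $\tau_H M$ and its $W_GH$-invariants. The critical step is a Weyl-group averaging argument converting coinvariants into invariants at the cost of a factor of $|W_GH|$, which is precisely what forces inverting $|G|$. Together with an induction on the subgroup lattice (using the inductive hypothesis on proper subgroups, and the orthogonality of idempotents to isolate the $H$-summand), this completes the identification.
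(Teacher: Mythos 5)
Your overall route---the idempotents $e_H\in A(G)[\tfrac1{|G|}]$ characterized by their marks---is the same one used in the source the paper cites for this lemma (Schwede, \S 3.4), and the first half of your part (2) is correct: $\Phi^H$ kills $H/K_+\otimes E$ for $K\subsetneq H$, so the geometric fixed point map annihilates transfers from proper subgroups and descends to $\tau_H(\ul\pi_0E)\rightarrow\pi_0\Phi^HE$. (Note also that the displayed $M(G/e)$ in the statement must be read as $M(G/G)$, as the application in \cref{lem:gtors} shows; you implicitly made this correction.) For part (1) your reduction to representables is legitimate in principle---both $M\mapsto e_H\cdot M(G/G)[\tfrac1{|G|}]$ and $M\mapsto(\tau_HM)^{W_GH}[\tfrac1{|G|}]$ are right exact and preserve direct sums, and the representable Mackey functors are projective generators---but be aware that the representable case, where the double coset formula and the Weyl-group averaging actually do their work, is essentially all of the content of (1), and you leave it as ``a direct Burnside-ring calculation.''

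The genuine gap is in the second half of (2). You propose to deduce the isomorphism by ``comparing with the tom Dieck splitting of $\pi_0^GE[\tfrac1{|G|}]$ across the geometric fixed point functors.'' In the strong form you would need---that $\phi_H$ identifies the $e_H$-summand of $\pi_0^HE[\tfrac1{|G|}]$ with $\pi_0\Phi^HE[\tfrac1{|G|}]$---that splitting \emph{is} the statement being proved, so the argument is circular; in the weak form (an abstract direct-sum decomposition indexed by conjugacy classes) matching $H$-summands against (1) only yields an abstract isomorphism of the $W_GH$-\emph{invariant} parts, not that the canonical map $\tau_H(\ul\pi_0E)[\tfrac1{|G|}]\rightarrow\pi_0\Phi^HE[\tfrac1{|G|}]$ itself is an isomorphism before taking invariants. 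What is actually needed is an isotropy-separation argument over the group $H$: let $\calP$ be the family of proper subgroups of $H$, apply $\pi_*^H$ to $E\calP_+\otimes E\rightarrow E\rightarrow\widetilde{E\calP}\otimes E$, and use that after inverting $|H|$ the idempotent $1-e_H\in A(H)[\tfrac1{|H|}]$ is a linear combination of the classes $[H/L]$ with $L\subsetneq H$ (Gluck's formula / Dress induction). Since $[H/L]$ acts as $\tr_L\res_L$ and $\res^H_L(\widetilde{E\calP})\simeq\ast$ for proper $L$, the element $1-e_H$ annihilates $\pi_*^H(\widetilde{E\calP}\otimes E)[\tfrac1{|H|}]=\pi_*\Phi^HE[\tfrac1{|H|}]$; and since $\res^H_L(e_H)=0$ for proper $L$, the idempotent $e_H$ annihilates $\pi_*^H(E\calP_+\otimes E)[\tfrac1{|H|}]$. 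The long exact sequence then shows $\phi_H$ restricts to an isomorphism $e_H\pi_0^HE[\tfrac1{|H|}]\cong\pi_0\Phi^HE[\tfrac1{|H|}]$, and the same expression of $1-e_H$ in terms of proper $[H/L]$ together with the projection formula identifies $e_H\pi_0^HE[\tfrac1{|H|}]$ with the quotient of $\pi_0^HE[\tfrac1{|H|}]$ by proper transfers, i.e.\ with $\tau_H(\ul\pi_0E)[\tfrac1{|H|}]$. Without some argument of this kind, (2) is not established by your outline.
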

\begin{proof}
See for example \cite[Section 3.4]{schwede2018global}.
\end{proof}

\begin{lemma}\label{lem:gtors}
Let $E$ be a $G$-spectrum. Fix $u\in E^0_G(Z)$, and for $H\subset G$ abbreviate $u_H = \res^G_H u$. Suppose that
\[
Z^H\neq \emptyset \implies u_H = 0
\]
for all $H\subset G$. Then $u$ has finite order dividing a power of $|G|$.
\end{lemma}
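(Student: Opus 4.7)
The plan is to invert $|G|$ and show that the image of $u$ in $\bbZ[\tfrac{1}{|G|}] \otimes E^0(Z)$ vanishes; this is equivalent to the desired conclusion that $|G|^n u = 0$ for some $n$.

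The key tool is \cref{lem:macksplit}, which after inverting $|G|$ allows one to detect elements of $E^0(Z)$ through geometric fixed points. Applying it to the Mackey functor $M = \ul{E}^0(Z)$, with $M(G/H) = (\res^G_H E)^0(\res^G_H Z)$ and $u \in M(G/G)$, one obtains a decomposition
\[
\bbZ[\tfrac{1}{|G|}] \otimes E^0(Z) \;\cong\; \bigoplus_{(H)} \bbZ[\tfrac{1}{|G|}] \otimes \bigl( (\Phi^H E)^0(Z^H) \bigr)^{W_G H},
\]
where we have used part~(2) to identify $\tau_H M$ with $(\Phi^H E)^0(Z^H)$, together with the standard facts $\Phi^H \Sigma^\infty_+ Z \simeq \Sigma^\infty_+ Z^H$ and $\Phi^H F(\Sigma^\infty_+ Z, E) \simeq F(\Sigma^\infty_+ Z^H, \Phi^H E)$ valid for finite $Z$. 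Under this decomposition the component of $u$ in the $H$-summand is $\Phi^H u = \Phi^H u_H$, the second equality because $\Phi^H$ on $G$-spectra factors through restriction to $H$.

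It therefore suffices to show $\Phi^H u_H = 0$ for each $H \subset G$, which splits into two cases. If $Z^H = \emptyset$, then $(Z^H)_+$ consists of nothing but the added basepoint, so $\Sigma^\infty_+ Z^H$ is the zero spectrum and $(\Phi^H E)^0(Z^H) = 0$; the $H$-summand is itself zero, and there is nothing to check. If $Z^H \neq \emptyset$, then the hypothesis gives $u_H = 0$, so $\Phi^H u_H = 0$ as well. In either case every component of $u$ in the splitting vanishes, proving the lemma.

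I expect the main obstacle is purely formal: verifying that the splitting stated in \cref{lem:macksplit}(1) at $M(G/e)$ induces the corresponding splitting of $M(G/G)$ used above (this is a routine consequence of the structure theory of Mackey functors after inverting $|G|$, but deserves explicit mention). Modulo that bookkeeping and the standard identifications of $\Phi^H$ on function spectra, the argument reduces entirely to the two-case analysis driven directly by the hypothesis.
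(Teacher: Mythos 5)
Your proof is correct and takes essentially the same route as the paper's: invert $|G|$, apply the splitting of \cref{lem:macksplit} to the function spectrum $\Sp^G(\Sigma^\infty_+Z,E)$ together with the identification $\Phi^H\Sp^G(\Sigma^\infty_+Z,E)\simeq \Sp^G(\Sigma^\infty_+(Z^H),\Phi^H E)$ for finite $Z$, and finish with the two-case analysis on $Z^H$. The bookkeeping point you flag is harmless: the splitting is the standard one at the level of $M(G/G)$ (the ``$G/e$'' in the statement of \cref{lem:macksplit} is best read as $G/G$), with the $H$-component of $u$ given by $\phi_H(u)=\Phi^H(u_H)$, exactly as you use it.
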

\begin{proof}
As $Z$ is compact we have $\Phi^H \Sp^G(\Sigma^\infty_+Z,E) \cong \Sp^G(\Sigma^\infty_+(Z^H),\Phi^H E)$, and so \cref{lem:macksplit} implies
\[
E^0_G(Z)[\tfrac{1}{|G|}]\subset \prod_{H\subset G}(\Phi^H E)^0(Z^H)[\tfrac{1}{|G|}].
\]
It thus suffices to show that $u_H=0$ in $(\Phi^H E)^0(Z^H)$ for all $H\subset G$. If $Z^H = \emptyset$ then this group vanishes. Otherwise $u_H = 0$ by assumption. In either case this shows that $u=0$ in $E^0_G(Z)[\tfrac{1}{|G|}]$, and so $u$ has finite order dividing a power of $|G|$.
\end{proof}

Combining these two lemmas leads to the following.

\begin{prop}\label{prop:mainlem}
A class $\alpha \in KF_G^0(Z)$ has finite order if and only if $Z^C\neq\emptyset\Rightarrow\alpha_C=0$ for all cyclic $C\subset G$, in which case its order divides a power of $|G|$.
\end{prop}
\begin{proof}
First suppose that $Z^C\neq\emptyset \Rightarrow \alpha_C = 0$ for all cyclic $C\subset G$. If $Z^H \neq \emptyset$, then $Z^C\neq\emptyset$ for all cyclic $C\subset H$. It follows that $\alpha_C = 0$ for all cyclic $C\subset H$, and as a representation is determined by its restriction to cyclic subgroups we deduce $\alpha_H = 0$. Thus $Z$ satisfies $Z^H \neq \emptyset \Rightarrow \alpha_H=0$ for all $H\subset G$. By \cref{lem:gtors}, this implies that $\alpha \in KF_G^0(Z)$ has finite order dividing a power of $|G|$.

Conversely, if $Z^C \neq \emptyset$, then there is some equivariant map $p\colon G/C\rightarrow Z$. This must satisfy $p^*(\alpha) = \alpha_C \in KF_G^0(G/C) \cong RF(C)$. It follows that if $\alpha$ has finite order, then so does $\alpha_C \in RF(C)$, implying that $\alpha_C = 0$ as $RF(C)$ is torsion-free.
\end{proof}

\begin{cor}\label{cor:mainlem}
If $Z^C \neq\emptyset\Rightarrow\alpha_C = 0$ for all cyclic subgroups $C\subset G$, then $D(\Sigma^\infty_+Z)$ admits a $t_\alpha$-element of order dividing a power of $|G|$.
\end{cor}
\begin{proof}
Combine \cref{prop:mainlem} and \cref{cor:existk}.
\end{proof}

\subsection{\texorpdfstring{$\calF$}{F}-nilpotence}\label{ssec:nilpotence}

Our next goal is to upgrade \cref{cor:mainlem} to arbitrary $G$-ring spectra. To do this we will make use of the nilpotence machinery developed by Mathew--Nauman--Noel in \cite{mathewnaumannnoel2017nilpotence,mathewnaumannnoel2019derived}. In this subsection we extract the parts of this theory that we will need.

Given a subgroup $H\subset G$, abbreviate
\[
G/H_+ = \Sigma^\infty_+ G/H.
\]
Let $\calF$ be a family of subgroups of $G$, i.e.\ a collection of subgroups closed under subconjugacy. 

\begin{defn}[{\cite[Definition 6.36]{mathewnaumannnoel2017nilpotence}}]
A $G$-spectrum $X$ is \textit{$\calF$-nilpotent} if it lies in the thick $\otimes$-ideal generated by $G/H_+$ for $H\in\calF$.
\tqed
\end{defn}

Up to $G$-homotopy equivalence, there is a unique $G$-space $E\calF$ characterized by
\[
E\calF^H \simeq \begin{cases}\emptyset,&H\notin\calF,\\\ast,&H\in\calF.\end{cases}
\]
A convenient model for this space is given as follows. Write
\[
G/\calF = \coprod_{H\in\calF}G/H.
\]
Then $E\calF$ is equivalent to an infinite join of copies of $G/\calF$:
\[
E\calF = \colim\left(G/\calF\rightarrow G/\calF \ast G/\calF \rightarrow G/\calF\ast G/\calF \ast G/\calF\rightarrow\cdots\right).
\]
Write $E\calF^{<m} = (G/\calF)^{\ast m-1}$ for the resulting $(m-1)$-skeleton of $E\calF$. This satisfies
\[
(E\calF^{<m})^H \simeq ( (G/\calF)^H)^{\ast m-1} \simeq \begin{cases}
\emptyset, & H \notin\calF ,\\
\bigvee S^{m-1},& H \in \calF.
\end{cases}
\]
We also abbreviate $E\calF^{<m}_+ = \Sigma^\infty_+ E\calF^{<m}$.

\begin{lemma}\label{lem:nildef}
Let $X$ be a $G$-spectrum, and consider the following statements.
\begin{enumerate}
\item $X$ is $\calF$-nilpotent.
\item $E\calF_+^{< m} \otimes X \rightarrow X$ admits a section for some $m$.
\item $X\rightarrow \Sp^G(E\calF_+^{<m},X) \simeq D(E\calF_+^{<m}) \otimes X$ admits a retraction for some $m$.
\item $\Phi^HX \neq 0 \Rightarrow H \in\calF$.
\end{enumerate}
Always (1)$\Leftrightarrow$(2)$\Leftrightarrow$(3)$\Rightarrow$(4). If $X$ is finite or a $G$-ring spectrum, then (4)$\Rightarrow$(2).
\end{lemma}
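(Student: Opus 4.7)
The plan is to prove (2)$\Leftrightarrow$(3) and (2)$\Rightarrow$(1)$\Rightarrow$(4) by formal manipulations, then handle the trickier (1)$\Rightarrow$(2), and finally establish (4)$\Rightarrow$(2) in the two special cases.

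For (2)$\Leftrightarrow$(3), I use that $E\calF_+^{<m}$ is a finite $G$-CW spectrum, hence dualizable: the augmentation $E\calF_+^{<m} \to S^0$ dualizes to $S^0 \to D(E\calF_+^{<m})$, and after tensoring with $X$, a section of the first map and a retraction of the second correspond under the duality pairing. For (2)$\Rightarrow$(1), the pushout description $A \ast B = A \sqcup_{A \times B} B$ gives, upon applying $\Sigma^\infty_+$, a cofiber sequence $\Sigma^\infty_+ A \otimes \Sigma^\infty_+ B \to \Sigma^\infty_+ A \vee \Sigma^\infty_+ B \to \Sigma^\infty_+(A \ast B)$, and iterating this starting from $G/\calF_+ = \bigvee_{H \in \calF} G/H_+$ shows $E\calF_+^{<m}$ lies in the thick subcategory generated by $\{G/H_+ : H \in \calF\}$. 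Hence $E\calF_+^{<m} \otimes X$ lies in the thick $\otimes$-ideal they generate, and $X$ does too by retraction. For (1)$\Rightarrow$(4), the geometric fixed points functors are symmetric monoidal and exact, so they preserve thick $\otimes$-ideals, and $\Phi^K(G/H_+) = \Sigma^\infty_+(G/H)^K = 0$ whenever $H \in \calF$ and $K \notin \calF$, since $K$ being subconjugate to $H$ would force $K \in \calF$.

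For (1)$\Rightarrow$(2), each generator $G/H_+$ with $H \in \calF$ satisfies (2): the projection $E\calF_+ \otimes G/H_+ \simeq \Sigma^\infty_+(G \times_H E\calF|_H) \to G/H_+$ is an equivalence because $E\calF|_H$ is $H$-contractible when $H \in \calF$, and compactness of $G/H_+$ lets me factor a section through $E\calF_+^{<m} \otimes G/H_+$ for some finite $m$. To propagate this through the thick $\otimes$-ideal, closure under retracts and under tensoring with an arbitrary $G$-spectrum is immediate, while closure under cofibers with a uniform choice of $m$ is more delicate: I would handle it using the tensor-compatibility of the filtration coming from the join identifications $E\calF^{<m} \ast E\calF^{<n} \simeq E\calF^{<m+n}$ and the associated cofiber sequences, following \cite{mathewnaumannnoel2017nilpotence}.

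For the special case (4)$\Rightarrow$(2), consider the cofiber sequence $E\calF_+ \to S^0 \to \widetilde{E\calF}$. Since $\Phi^H \widetilde{E\calF} \simeq S^0$ for $H \notin \calF$ and $\Phi^H \widetilde{E\calF} \simeq \ast$ for $H \in \calF$, hypothesis (4) combined with the monoidality of $\Phi^H$ gives $\Phi^H(\widetilde{E\calF} \otimes X) \simeq 0$ for every subgroup $H \leq G$, so $\widetilde{E\calF} \otimes X \simeq 0$ and $E\calF_+ \otimes X \xrightarrow{\sim} X$. When $X$ is finite, compactness of $X$ factors the inverse equivalence through some $E\calF_+^{<m} \otimes X$, giving a section directly. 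When $X$ is a $G$-ring spectrum with unit $\iota$, compactness of $S^0$ lifts the composite $S^0 \xrightarrow{\iota} X \simeq E\calF_+ \otimes X$ to a class $\tilde\iota \colon S^0 \to E\calF_+^{<m} \otimes X$, and then
\[
X \simeq S^0 \otimes X \xrightarrow{\tilde\iota \otimes X} E\calF_+^{<m} \otimes X \otimes X \xrightarrow{E\calF_+^{<m} \otimes \mu} E\calF_+^{<m} \otimes X
\]
is a section of the counit, the verification using $\epsilon \circ \tilde\iota = \iota$ and the ring unit axiom. The main obstacle is the cofiber-closure step in (1)$\Rightarrow$(2), where a uniform bound on $m$ must be extracted across cofiber sequences; this is the point where the explicit combinatorics of the filtration become essential, and the rest of the argument is comparatively formal.
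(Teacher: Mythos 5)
Your proposal is correct, and it is somewhat more self-contained than the paper's proof, which handles most of these implications by citation. Where the paper quotes Mathew--Naumann--Noel for (1)$\Leftrightarrow$(2)$\Leftrightarrow$(3) and for the ring-spectrum case of (4)$\Rightarrow$(2) (their Theorem 6.41), you unpack these: your duality argument for (2)$\Leftrightarrow$(3), your Mayer--Vietoris/join induction showing $E\calF_+^{<m}$ lies in the thick subcategory generated by the $G/H_+$ (giving (2)$\Rightarrow$(1)), and especially your direct proof of the ring case of (4)$\Rightarrow$(2) --- lifting the unit through the skeletal filtration of $E\calF_+\otimes X$ by compactness of $S^0$ and then using the multiplication to build the section --- are exactly the content of the results the paper cites, and they check out. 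You also prove (1)$\Rightarrow$(4) via monoidality and exactness of geometric fixed points instead of the paper's direct (2)$\Rightarrow$(4) from $\Phi^H(E\calF_+^{<m})\simeq 0$ for $H\notin\calF$; given the established equivalences these are interchangeable and rest on the same vanishing. The one genuinely delicate step, closure of condition (2) under cofiber sequences with a uniform bound on $m$ inside (1)$\Rightarrow$(2), you correctly identify and defer to the multiplicativity of the join filtration ($E\calF^{<m}\ast E\calF^{<n}\simeq E\calF^{<m+n}$) following Mathew--Naumann--Noel --- which is the same reliance on that reference the paper itself makes, so there is no gap; the trade-off is simply your added length and detail versus the paper's brevity by citation.
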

\begin{proof}
(1)$\Leftrightarrow$(2)$\Leftrightarrow$(3): See \cite[Theorem 2.25, Remark 2.27]{mathewnaumannnoel2019derived}.

(2)$\Rightarrow$(4): If $H\notin\calF$ then 
\[
\Phi^H(E\calF_+^{<m}\otimes X)\simeq \Phi^H(E\calF_+^{<m})\otimes\Phi^H X \simeq 0 \otimes \Phi^H X \simeq 0.
\]
Thus if $X$ is a retract of $E\calF_+^{<m}\otimes X$ then $\Phi^H X \simeq 0$ for $H\notin\calF$.

(4)$\Rightarrow$(2) when $X$ is finite: If $\Phi^H X \neq 0 \Rightarrow H \in \calF$, then the map $E\calF_+ \otimes X \rightarrow X$ is an equivalence. As $X$ is finite, the inverse $X\rightarrow E\calF_+ \otimes X \simeq \colim_{n\rightarrow\infty}E\calF_+^{< m}\otimes X$ factors through some $E\calF_+^{< m}\otimes X$.

(4)$\Rightarrow$(2) when $X$ is a ring: See \cite[Theorem 6.41]{mathewnaumannnoel2017nilpotence}.
\end{proof}

This allows for the following explicit quantification of $\calF$-nilpotence, see \cite[Definition 6.36]{mathewnaumannnoel2017nilpotence} and \cite[Proposition 2.26]{mathewnaumannnoel2019derived}

\begin{defn}
The \textit{$\calF$-exponent} $\exp_\calF(X)$ of an $\calF$-nilpotent $G$-spectrum $X$ is the minimal $m$ for which $X\rightarrow D(E\calF_+^{<m})\otimes X$ admits a retraction.
\tqed
\end{defn}

If $R$ is a $\calF$-nilpotent $G$-ring spectrum with $\exp_\calF(R)\leq m$, then precomposing the guaranteed retraction $D(E\calF_+^{<m})\otimes R\rightarrow R$ with the unit $D(E\calF_+^{<m})\rightarrow D(E\calF_+^{<m})\otimes R$ provides a map
\[
r\colon D(E\calF_+^{<m})\rightarrow R
\]
satisfying $r(1) = 1$. This map is \textit{not} guaranteed to be multiplicative. It is possible to show by a formal argument that the composite $D(E\calF_+^{<2m})\rightarrow D(E\calF_+^{<m})\rightarrow R$ is multiplicative, and that although the composite $D(E\calF_+^{<m+1})\rightarrow D(E\calF_+^{<m})\rightarrow R$ may fail to be multiplicative the induced map on homotopy groups preserves invertible elements. However, in our particular context it turns out we can do just a little better.

\begin{lemma}\label{prop:nishida}
Let $R$ be a $G$-ring spectrum. Fix a map $r\colon D(E\calF_+^{<m})\rightarrow R$ satisfying $r(1) = 1$, and fix $t\in \pi_\alpha ^GD(E\calF_+^{<m})$.
\begin{enumerate}
\item If $t$ lifts $t_\alpha \in \pi_\alpha^G(MF_G\otimes D(E\calF_+^{<m}))$, then $r(t)$ lifts $t_\alpha \in \pi_\alpha^G(MF_G \otimes R)$.
\item If $t$ is invertible and $m\geq 2$, then $r(t) \in \pi_\alpha^G R$ is invertible.
\end{enumerate}
In particular, if $m\geq 2$ then $r$ preserves $t_\alpha$-elements.
\end{lemma}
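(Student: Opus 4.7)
The plan is to prove parts (1) and (2) separately, with (1) being a formal naturality argument and (2) being the substantive Nishida-type step.

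For (1), write $h_X\colon \pi_\alpha X \to \pi_\alpha(MF_G \otimes X)$ for the Hurewicz map induced by the unit $S \to MF_G$, and abbreviate $D = D(E\calF_+^{<m})$. Naturality along $r$ gives $h_R(r(t)) = (\mathrm{id}_{MF_G}\otimes r)_*(h_D(t))$, and by hypothesis $h_D(t) = t_\alpha \in \pi_\alpha(MF_G\otimes D)$, where $t_\alpha$ on the right is the image of the genuine $t_\alpha \in \pi_\alpha MF_G$ under the unit $MF_G \to MF_G\otimes D$. Because $\mathrm{id}_{MF_G}\otimes r$ is a map of $MF_G$-modules sending the unit of $D$ to the unit of $R$ (as $r(1)=1$), it carries this canonical class to the analogous $t_\alpha \in \pi_\alpha(MF_G\otimes R)$, establishing the claim.

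For (2), let $D = D(E\calF_+^{<m})$ and set $u := r(t^{-1}) \in \pi_{-\alpha}R$. The goal is to show $r(t)\cdot u = 1 = u\cdot r(t)$ in $\pi_0 R$; the second equality follows from the first by interchanging $t$ and $t^{-1}$, so $u$ is a two-sided inverse of $r(t)$. Both $r(t)\cdot u$ and $r(t\cdot t^{-1}) = r(1) = 1$ arise from the same class $t\otimes t^{-1}\in\pi_0(D\otimes D)$ applied to two different maps $D\otimes D\to R$: namely $\mu_R\circ(r\otimes r)$ and $r\circ\mu_D$, respectively. The task reduces to comparing these two composites on the class $t\otimes t^{-1}$. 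Both composites agree whenever either tensor factor is restricted along the unit $S\to D$ (using $r(1)=1$), so their difference factors through $\overline{D}\otimes\overline{D}$, where $\overline{D} = \Fib(D\to S)$ is the augmentation ideal, dual to a pointed version of $E\calF^{<m}$.

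The main obstacle is to show that this difference, pulled back along $t\otimes t^{-1}$, vanishes, and this is where the hypothesis $m\geq 2$ enters. The augmentation $D\to S$ is dual to a basepoint inclusion $S^0\hookrightarrow E\calF_+^{<m}$, so $\overline{D}$ corresponds (up to shift) to the dual of a reduced form of $E\calF^{<m}$, a $G$-space of positive dimension precisely when $m\geq 2$. I would attempt a filtration-theoretic argument using the skeletal filtration $E\calF^{<1}\subset E\calF^{<2}\subset\cdots\subset E\calF^{<m}$: this induces a tower on $D$ whose graded pieces dualize the cellular strata, and the diagonal $\Delta\colon E\calF_+^{<m}\to E\calF_+^{<m}\wedge E\calF_+^{<m}$ (dual to $\mu_D$) is filtration-compatible in the sense of factoring through successive joins. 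The difference map should then be controlled cell-by-cell, and invertibility of $t$ forces the residual error to vanish. Making this bookkeeping precise, so as to trivialize the error directly rather than invoking the general Nishida-type formalism mentioned in the surrounding discussion (which would only handle the $t_\alpha$-element case via the equivariant nilpotence theorem), is the technical heart of the argument.
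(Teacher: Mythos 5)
Your part (1) is correct and is essentially the paper's argument. The gap is in part (2), and it is a gap of strategy, not just of detail: you set out to prove the exact identity $r(t)\cdot r(t^{-1})=1$ in $\pi_0R$, but this is stronger than what is true for an arbitrary unital (non-multiplicative) $r$, and it is not what the lemma asserts. Since $r$ need not be a ring map, $r(t)\cdot r(t^{-1})$ differs from $r(t\cdot t^{-1})=1$ by an error term of positive skeletal filtration, and this error does not vanish in general; no cell-by-cell bookkeeping with the joins $E\calF^{<1}\subset\cdots\subset E\calF^{<m}$ will kill it, because the two maps $\mu_R\circ(r\otimes r)$ and $r\circ\mu_D$ genuinely differ on $t\otimes t^{-1}$. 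What is true is that the error is \emph{nilpotent}, so that the relevant element is a unit plus a nilpotent and hence invertible. Establishing that nilpotence is exactly where the hypothesis $m\geq 2$ and Nishida's theorem enter: passing to $H$-geometric fixed points (which are jointly conservative, so invertibility of $r(t)$ may be checked there), one has $(E\calF^{<m})^H\simeq\bigvee S^{m-1}$ for $H\in\calF$ and $\emptyset$ otherwise, so $D(\Sigma^\infty_+(E\calF^{<m})^H)\simeq S\oplus\bigvee S^{-(m-1)}$, the invertible class $\Phi^Ht$ is $\pm1$ plus a class $\epsilon$ built from $\pi_{m-1}S$, and $\Phi^Hr(t)=\pm1+g(\epsilon)$ with $g(\epsilon)$ nilpotent because $\pi_{m-1}S$ is torsion for $m-1\geq 1$ (Nishida). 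Your stated reason for $m\geq 2$ (``positive dimension'') produces no vanishing or nilpotence on its own, and your decision to avoid ``the Nishida-type formalism'' discards the one ingredient the proof actually needs; note that this ingredient is Nishida's nilpotency theorem for torsion in $\pi_\ast S$, not the equivariant nilpotence machinery of Mathew--Naumann--Noel (which is used elsewhere, e.g.\ in \cref{thm:maint}), so invoking it does not beg the question.

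There are also problems with the setup of your comparison. The augmentation $D\rightarrow S$ used to define $\ol{D}=\Fib(D\rightarrow S)$ does not exist in the relevant cases: it is dual to a $G$-fixed point of $E\calF^{<m}$, and $(E\calF^{<m})^G=\emptyset$ unless $G\in\calF$. One could instead use $\Cof(S\rightarrow D)$, but then the claim that the difference of the two maps factors through $\ol{D}\otimes\ol{D}$ requires choosing compatible homotopies on $S\otimes D$ and $D\otimes S$ (the difference is only defined up to this indeterminacy), and in any case the resulting class evaluated on $t\otimes t^{-1}$ is precisely the nonzero-but-nilpotent error discussed above. Finally, your argument never exploits the invertibility of $t$ in a way that could work without descending to fixed points; in the paper's proof it is the invertibility of $\Phi^Ht$ that forces $|\alpha^H|=0$ and makes the explicit splitting usable. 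I would restructure (2) along those lines: check invertibility of $\Phi^Hr(t)$ for each $H$, using the wedge-of-spheres model and Nishida nilpotence.
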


\begin{proof}
(1)~~This holds as the maps $D(E\calF_+^{<m})\rightarrow MF_G \otimes D(E\calF_+^{<m})$ and $R\rightarrow MF_G\otimes R$ are defined using only the unit maps of $D(E\calF_+^{<m})$ and $R$, and $r$ is compatible with these.

(2)~~It suffices to show that $\Phi^H r(t)$ is invertible for all $H\subset G$. Abbreviate $W = (E\calF^{<m})^H$. As $t\in \pi_\alpha ^GD(E\calF_+^{<m})$ is invertible, necessarily $\Phi^H t \in \pi_{\alpha^H}D(\Sigma^\infty_+W)$ is invertible, implying also $|\alpha^H| = 0$. It now suffices to prove the nonequivariant assertion that if $T = \Phi^H R$ is a ring spectrum, $f = \Phi^H r\colon D(\Sigma^\infty_+W)\rightarrow T$ satisfies $f(1) = 1$, and $s = \Phi^H t \in \pi_0 D(\Sigma^\infty_+W)$ is invertible, then $f(s) \in \pi_0 T$ is invertible.

If $H\notin\calF$, then $W = \emptyset$ and there is nothing to check. If $H\in\calF$, then there is an equivalence $W\simeq \bigvee S^{m-1}$. Choosing such an equivalence, the unit $S\rightarrow D(\Sigma^\infty_+ W)$ splits off giving
\[
D(\Sigma^\infty_+W)\simeq S \oplus \bigvee S^{-(m-1)}.
\]
The unit $s$ must appear in this splitting as $s = (\pm 1,\epsilon)$ for some $\epsilon \in \pi_0 \bigvee S^{-(m-1)}\cong \bigoplus \pi_{m-1}S$. As $f(1) = 1$, it follows that $f(s) = \pm 1 + g(\epsilon)$ for some $g\colon \bigvee S^{-(m-1)}\rightarrow T$. As $m\geq 2$, Nishida nilpotence \cite{nishida1973nilpotency} implies that $\epsilon$ is $\otimes$-nilpotent, and thus $g(\epsilon)$ is nilpotent. This implies that $f(s)$ is invertible as claimed.
\end{proof}

\begin{rmk}
The condition $m\geq 2$ in \cref{prop:nishida} is necessary. For example, let $G = C_2$ and write $\ul{S}^1 = S(2\sigma)$ for the $1$-sphere with its free antipodal action. Then $R = D(\Sigma^\infty_+ \ul{S}^1)[\tfrac{1}{2}]$ satisfies $\exp_{\{e\}}(R) = 1$, but $\pi_{1-\sigma}^{C_2}R = 0$ and so every map $s\colon D(C_{2+})\rightarrow R$ sends the invertible element $u_\sigma \in \pi_{1-\sigma}^{C_2}D(C_{2+})\cong \pi_0 S \cong \bbZ$ to zero. On the other hand, if $m=1$ then the $\epsilon$ appearing in the proof of \cref{prop:nishida} must be divisible by $2$, and so the proof goes through provided each $\Phi^HR$ is $2$-complete.
\tqed
\end{rmk}

\subsection{Existence of \texorpdfstring{$t_\alpha$}{t\_alpha}-elements}\label{ssec:maint}

Fix $\alpha \in RF(G)$ of virtual dimension zero. We can now formulate and prove our general existence theorems on $t_\alpha$-elements and $t_\alpha$-self maps. We first introduce a convenient definition.

\begin{defn}
The \textit{$\calF$-order} $\ord_\calF(\alpha)$ of $\alpha$ is the minimal $m$ for which $\alpha\neq 0$ in $KF_G^0(E\calF^{<m+1})$, with $\ord_\calF(\alpha) = \infty$ if no such $m$ exists.
\tqed
\end{defn}

In other words,
\[
m\leq\ord_\calF(\alpha)\quad \iff\quad \alpha = 0 \text{ in } KF_G^0(E\calF^{<m}),
\]
and $\ord_\calF(\alpha) = m$ precisely when $\alpha$ is detected on the $m$-line of the equivariant Atiyah--Hirzebruch spectral sequence
\[
H^\ast_G(E\calF;\ul{\pi}_\ast KF_G)\Rightarrow KF_G^\ast(E\calF).
\]
The most familiar case is when $\calF = \{e\}$, where $E\calF = EG$ and this is equivalent to the nonequivariant Atiyah--Hirzebruch spectral sequence for $KF^\ast BG$. Observe that $\ord_\calF(\alpha) \geq 0$, with equality if and only if $\alpha_H \neq 0$ for some $H\in\calF$. Thus if we define the family
\[
\calF(\alpha) = \{H\subset G: \alpha_H = 0\},
\]
then $\ord_\calF(\alpha)>0$ if and only if $\calF\subset\calF(\alpha)$. This refines to the following.

\begin{lemma}\label{lem:family}
If $\calF\subset\calF(\alpha)$, then the sequence $\{\ord_\calF(|G|^k\alpha):k\geq 0\}$ is unbounded.
\end{lemma}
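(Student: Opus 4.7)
The plan is to deduce \cref{lem:family} as a direct consequence of \cref{prop:mainlem} applied to the finite $G$-complexes $E\calF^{<N}$. By the reformulation recorded just after the definition of $\ord_\calF$, the inequality $\ord_\calF(|G|^k\alpha) \geq N$ is equivalent to the vanishing of $|G|^k\alpha$ in $KF_G^0(E\calF^{<N})$. So it suffices to show that for each fixed $N \geq 0$, some power $|G|^k$ kills $\alpha$ in $KF_G^0(E\calF^{<N})$, and then let $N \to \infty$.

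To invoke \cref{prop:mainlem}, I first need to know that $Z := E\calF^{<N}$ is a finite $G$-complex. Since $G$ is finite, the family $\calF$ is a finite collection of subgroups and $G/\calF = \coprod_{H \in \calF} G/H$ is a finite $G$-complex; the iterated join $(G/\calF)^{*N-1}$ is therefore also finite.

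Next I would check the hypothesis of \cref{prop:mainlem}. From the computation of $(E\calF^{<m})^H$ recorded in \cref{ssec:nilpotence}, one has $Z^H \neq \emptyset$ if and only if $H \in \calF$. In particular, for any cyclic subgroup $C \subset G$ with $Z^C \neq \emptyset$, we have $C \in \calF \subset \calF(\alpha)$ by assumption, so $\alpha_C = 0$. Applying \cref{prop:mainlem} to $\alpha \in KF_G^0(Z)$ shows that this class has finite order dividing some power of $|G|$.

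Hence there exists $k \geq 0$ such that $|G|^k\alpha = 0$ in $KF_G^0(E\calF^{<N})$, equivalently $\ord_\calF(|G|^k\alpha) \geq N$. Since $N$ was arbitrary, the sequence $\{\ord_\calF(|G|^k\alpha) : k \geq 0\}$ is unbounded. There is no real obstacle here; the argument is just a direct repackaging of \cref{prop:mainlem}, with the only minor point of care being the verification that the skeleta $E\calF^{<N}$ are genuinely finite $G$-complexes to which the proposition applies.
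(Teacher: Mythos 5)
Your proof is correct and follows essentially the same route as the paper: translate the unboundedness claim into the vanishing of $|G|^k\alpha$ in $KF_G^0(E\calF^{<N})$, verify via the fixed-point description of $E\calF^{<N}$ that the hypothesis of \cref{prop:mainlem} holds (using $\calF\subset\calF(\alpha)$), and conclude. The extra checks you include (finiteness of the skeleta, restricting to cyclic subgroups) are exactly the points the paper leaves implicit.
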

\begin{proof}
The claim is that for all $m\geq 0$ there exists some $k\geq 0$ for which $m\leq \ord_\calF(|G|^k\alpha)$, that is for which $|G|^k\alpha = 0$ in $KF_G^0(E\calF^{<m})$. As $\calF\subset\calF(\alpha)$ we have $(E\calF^{<m})^H \neq \emptyset\Rightarrow \alpha_H = 0$, and so this follows from \cref{prop:mainlem}.
\end{proof}

\begin{lemma}\label{lem:characterfamily}
Suppose that $X$ is finite or a $G$-ring spectrum. Then $X$ is $\calF(\alpha)$-nilpotent if and only if $\Phi^C X \neq 0 \Rightarrow\alpha_C = 0$ for all cyclic subgroups $C\subset G$.
\end{lemma}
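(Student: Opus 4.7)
The ($\Rightarrow$) direction is immediate from \cref{lem:nildef}: $\calF(\alpha)$-nilpotence of $X$ forces $\Phi^HX \neq 0 \Rightarrow H \in \calF(\alpha)$ for every subgroup $H$, hence $\alpha_H = 0$, and restricting to cyclic subgroups gives the stated condition.

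For the ($\Leftarrow$) direction, assume the cyclic hypothesis. I plan to invoke the implication (4)$\Rightarrow$(2) of \cref{lem:nildef}, which is precisely where the finite-or-ring hypothesis on $X$ is used. To feed into that implication, one must upgrade the cyclic condition to its all-subgroups form $\Phi^HX \neq 0 \Rightarrow \alpha_H = 0$. The key representation-theoretic input, also used in the proof of \cref{prop:mainlem}, is that a virtual $G$-representation is determined by its restrictions to cyclic subgroups; equivalently, $\alpha_H = 0$ if and only if $\alpha_C = 0$ for every cyclic $C \subset H$.

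So given $H$ with $\Phi^HX \neq 0$ and a cyclic $C \subset H$, the task is to verify $\alpha_C = 0$. The cyclic hypothesis gives this outright whenever $\Phi^CX \neq 0$; otherwise $\Phi^CX = 0$, and one must instead deduce $\Phi^HX = 0$ to obtain a contradiction. For $C \triangleleft H$ this is immediate from the identity $\Phi^HX \simeq \Phi^{H/C}(\Phi^CX)$, and the general case reduces to the normal one by passing along the normalizer tower $C \triangleleft N_H(C) \triangleleft N_H(N_H(C)) \triangleleft \cdots$, propagating vanishing of $\Phi^{(-)}X$ one normal extension at a time.

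The main obstacle is that this tower need not terminate at $H$ when $H$ is non-solvable, so the step from cyclic vanishing to $H$-vanishing is not purely group-theoretic. This is where the finite-or-ring hypothesis on $X$ must enter in a nontrivial way: either through a structural constraint on the geometric fixed-point support of $X$ (analogous to the containment $Z^H \subset Z^C$ used for spaces in \cref{prop:mainlem}) or by a direct appeal to the $\calF$-nilpotence machinery of \cite{mathewnaumannnoel2017nilpotence, mathewnaumannnoel2019derived}, where Mackey-functor arguments at the level of $\ul{\pi}_0$ together with the cyclic detection of the representation ring close the gap.
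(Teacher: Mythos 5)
Your forward implication is the paper's, and your plan for the converse is also the paper's: upgrade the cyclic hypothesis to the all-subgroups statement $\Phi^H X \neq 0 \Rightarrow \alpha_H = 0$ using detection of virtual representations on cyclic subgroups, then apply (4)$\Rightarrow$(2) of \cref{lem:nildef}. The gap is in the upgrade. What is needed is exactly that the geometric isotropy $\{H\subset G : \Phi^H X \neq 0\}$ be closed under subconjugacy, so that $\Phi^H X \neq 0$ forces $\Phi^C X \neq 0$ for every cyclic $C\subset H$; the paper's proof asserts this closure for $X$ finite or a $G$-ring spectrum and is then finished in one line, and this is the only place the finite-or-ring hypothesis enters. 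Your proposal does not supply it: you defer it to an unspecified structural constraint or to \cite{mathewnaumannnoel2017nilpotence,mathewnaumannnoel2019derived}, and the one step you do claim --- that for $C\trianglelefteq H$ the vanishing of $\Phi^C X$ ``immediately'' gives $\Phi^H X = 0$ via $\Phi^H X \simeq \Phi^{H/C}(\Phi^C X)$ --- is not correct. In that identity $\Phi^C X$ must be taken with its genuine $H/C$-equivariant structure, and the hypothesis $\Phi^C X = 0$ only says that the \emph{underlying} spectrum of that $H/C$-spectrum vanishes, which does not force its $H/C$-geometric fixed points to vanish. Concretely, with $H = C_2$ and $C = e$, the cofiber $X$ of a map $S\rightarrow S$ representing $3 - \tr_e^{C_2}(1) \in A(C_2)\cong \pi_0 S_{C_2}$ is a finite $C_2$-spectrum with $\Phi^e X \simeq 0$ (the underlying degree is $1$) but $\Phi^{C_2}X \simeq \Cof(3)\neq 0$.

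So the normalizer-tower reduction fails already at its base case, and no formal manipulation of iterated geometric fixed points can produce the needed closure. Indeed the same Burnside-ring construction over $G = C_2\times C_2$ (the cofiber of a map $S\rightarrow S$ whose mark is $1$ at every proper subgroup and $3$ at $G$) is a finite $G$-spectrum whose geometric isotropy is $\{G\}$ alone, which is not closed under subconjugacy; for this $X$ the cyclic condition of the lemma holds vacuously for every $\alpha$, while $X$ is $\calF(\alpha)$-nilpotent only when $\alpha = 0$. This shows that the passage from cyclic subgroups to all subgroups is not available from the stated hypotheses by any soft argument, and that the subconjugacy-closure of the isotropy --- the assertion the paper's proof relies on, and the content your sketch explicitly leaves open --- is precisely the point requiring genuine justification (compare \cref{prop:mainlem}, where for a $G$-space the containment $Z^H\subset Z^C$ does this work for free, or \cref{lem:gtors}, where the condition is imposed for all subgroups). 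In short, your write-up correctly locates the difficulty but, by your own admission, does not resolve it, and the partial argument it offers in the normal case would fail; as written it is not a proof.
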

\begin{proof}
Suppose that $\Phi^C X \neq 0 \Rightarrow \alpha_C = 0$ for all cyclic subgroups $C\subset G$. As $X$ is finite or a $G$-ring spectrum, the collection $\{H\subset G : \Phi^H X \neq 0 \}$ forms a family of subgroups of $G$ closed under subconjugacy. It follows that $\Phi^H X \neq 0 \Rightarrow (C\subset H\text{ cyclic}\Rightarrow\alpha_C = 0)$. As a representation is determined by its restriction to cyclic subgroups we deduce $\Phi^H X \neq 0 \Rightarrow \alpha_H = 0$ for all $H\subset G$. Thus $X$ is $\calF(\alpha)$-nilpotent by \cref{lem:nildef}, which also gives the converse.
\end{proof}

We now give the main theorem of this section. Fix a family $\calF\subset \calF(\alpha)$.

\begin{theorem}\label{thm:maint}
Let $R$ be a $\calF$-nilpotent $G$-ring spectrum, and suppose that $\ord_\calF(n\alpha) \geq \max(2,\exp_\calF(R))$. Then $R$ admits a $t_\alpha$-element of order $n$. In particular, if $\Phi^C R \neq 0 \Rightarrow \alpha_C = 0$ for all cyclic subgroups $C\subset G$, then $R$ admits a $t_\alpha$-element of order dividing a power of $|G|$.
\end{theorem}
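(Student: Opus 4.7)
Set $m = \max(2, \exp_\calF(R))$. The hypothesis $\ord_\calF(n\alpha) \geq m$ unfolds directly to the statement $n\alpha = 0$ in $KF_G^0(E\calF^{<m})$. Since $E\calF^{<m}$ is a finite $G$-complex, \cref{lem:kt} applied to the virtual representation $n\alpha$ furnishes a $t_{n\alpha}$-element
\[
t \in \pi_{n\alpha} D(E\calF_+^{<m}).
\]
This is the source of the desired invertible class.

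To transport $t$ to $R$, I use the $\calF$-nilpotence of $R$. Since $\exp_\calF(R) \leq m$, the unit map $R \to D(E\calF_+^{<m}) \otimes R$ admits a retraction, and precomposing this retraction with the unit of $R$ produces a map $r \colon D(E\calF_+^{<m}) \to R$ with $r(1) = 1$. The map $r$ need not be multiplicative, but this is exactly the situation \cref{prop:nishida} is designed for: because $m \geq 2$, $r$ sends $t_\alpha$-elements to $t_\alpha$-elements. In particular $r(t) \in \pi_{n\alpha} R$ is a $t_{n\alpha}$-element, i.e.\ an invertible lift of $t_\alpha^n$, which by definition is a $t_\alpha$-element of $R$ of order $n$.

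For the second assertion, assume $\Phi^C R \neq 0 \Rightarrow \alpha_C = 0$ for every cyclic subgroup $C \subset G$. By \cref{lem:characterfamily}, $R$ is $\calF(\alpha)$-nilpotent, so $\exp_{\calF(\alpha)}(R)$ is some finite number $M$. By \cref{lem:family}, the sequence $\{\ord_{\calF(\alpha)}(|G|^k \alpha)\}_{k \geq 0}$ is unbounded, so I can choose $k$ large enough that $\ord_{\calF(\alpha)}(|G|^k \alpha) \geq \max(2, M)$. Applying the first part with $\calF = \calF(\alpha)$ and $n = |G|^k$ produces a $t_\alpha$-element of $R$ of order $|G|^k$, which divides a power of $|G|$ as claimed.

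\textbf{Main obstacle.} The substantive content has already been front-loaded into \cref{lem:kt}, \cref{prop:nishida}, and \cref{lem:family}; what remains is essentially a bookkeeping assembly. The only point requiring mild care is the compatibility in \cref{prop:nishida}(1)---that is, that $r$ preserves the property of lifting the universal Thom periodicity---but this is exactly why we chose $r$ to be unital. The condition $m \geq 2$ built into the statement of the theorem is there precisely to invoke Nishida nilpotence in \cref{prop:nishida}(2), and the remark following \cref{prop:nishida} already explains why this hypothesis cannot be dropped in general.
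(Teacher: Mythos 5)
Your proposal is correct and follows essentially the same route as the paper's own proof: unfold $\ord_\calF(n\alpha)\geq m$ to $n\alpha=0$ in $KF_G^0(E\calF^{<m})$, invoke \cref{lem:kt} to get a $t_\alpha$-element of order $n$ in $\pi_{n\alpha}D(E\calF_+^{<m})$, push it into $R$ via a unital map $r$ coming from the nilpotence retraction using \cref{prop:nishida} (with $m\geq 2$), and deduce the last clause from \cref{lem:characterfamily} and \cref{lem:family}. No substantive differences.
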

\begin{proof}
Abbreviate $m = \max(2,\exp_\calF(R))$.  As $m\leq \ord_\calF(n\alpha)$, we have $n\alpha = 0$ in $KF_G^0(E\calF^{<m})$. By \cref{cor:existk}, there is a $t_\alpha$-element $t \in \pi_{n\alpha}^GD(E\calF_+^{<m})$. As $m\geq \exp_\calF(R)$, there is a map $r\colon D(E\calF_+^{<m})\rightarrow R$ satisfying $r(1) = 1$. As $m\geq 2$, this preserves $t_\alpha$-elements by \cref{prop:nishida}. Thus $r(t) \in \pi_{n\alpha}^GR$ is a $t_\alpha$-element of order $n$. The final statement follows from \cref{lem:characterfamily}, which ensures that $R$ is $\calF(\alpha)$-nilpotent, and \cref{lem:family}, which ensures that $\ord_{\calF(\alpha)}(n\alpha)\geq \max(2,\exp_{\calF(\alpha)}(R))$ for some $n$ dividing a power of $|G|$.
\end{proof}

\begin{cor}\label{cor:maint}
Let $X$ be an $\calF$-nilpotent $G$-spectrum, and suppose $\ord_\calF(n\alpha) \geq \max(2,\exp_\calF(R))$. Then $X$ admits a $t_\alpha$-self map of order $n$. In particular, if $X$ is $\calF(\alpha)$-nilpotent then $X$ admits a $t_\alpha$-self map of order dividing a power of $|G|$; and if $X$ is compact, then it suffices to verify just that $\Phi^C X \neq 0 \Rightarrow\alpha_C = 0$ for all cyclic subgroups $C\subset G$.
\end{cor}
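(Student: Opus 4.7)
The plan is to reduce to \cref{thm:maint} by passing to the endomorphism ring spectrum $\End(X)$. The key observation is that a $t_\alpha$-self map of $X$ of order $n$ is the same data as a $t_\alpha$-element of $\pi_{n\alpha}\End(X)$: invertibility in $\pi_\star\End(X)$ corresponds exactly to the condition that the associated map $\Sigma^{n\alpha}X \rightarrow X$ be an equivalence, and the $MF_G$-compatibility conditions match by unwinding adjunctions (this is essentially \cref{lem:closure} applied with $X$ regarded as a module over itself). So it suffices to produce a $t_\alpha$-element of order $n$ in the $G$-ring spectrum $\End(X)$.

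Set $m = \max(2, \exp_\calF(X))$. Since $\exp_\calF(X) \leq m$, \cref{lem:nildef}(3) provides a retraction $p\colon D(E\calF_+^{<m}) \otimes X \rightarrow X$ of the natural section $i\colon X \rightarrow D(E\calF_+^{<m}) \otimes X$. Passing to adjoints, $p$ corresponds to a map $r\colon D(E\calF_+^{<m}) \rightarrow \End(X)$ satisfying $r(1) = p \circ i = \mathrm{id}_X = 1_{\End(X)}$. Since $m \leq \ord_\calF(n\alpha)$, we have $n\alpha = 0$ in $KF_G^0(E\calF^{<m})$, and so \cref{lem:kt} produces a $t_\alpha$-element $t \in \pi_{n\alpha} D(E\calF_+^{<m})$. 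Applying \cref{prop:nishida} (valid because $m \geq 2$) to the map $r$ and element $t$, the image $r(t) \in \pi_{n\alpha}\End(X)$ is a $t_\alpha$-element, which by the identification above supplies the desired $t_\alpha$-self map on $X$ of order $n$.

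For the ``in particular'' clause, one argues exactly as at the end of the proof of \cref{thm:maint}: assuming $X$ is $\calF(\alpha)$-nilpotent, take $\calF = \calF(\alpha)$ and invoke \cref{lem:family} to produce some $n$ dividing a power of $|G|$ with $\ord_{\calF(\alpha)}(n\alpha) \geq \max(2, \exp_{\calF(\alpha)}(X))$; the first part of the corollary then applies. The only even mildly substantive step in the entire argument is the opening identification between self maps of $X$ and elements of $\End(X)$; everything else is a direct transcription of the proof of \cref{thm:maint}, with the role played there by the unit of the ring $R$ now played by $\mathrm{id}_X \in \pi_0\End(X)$, which is supplied by the section/retraction data of the $\calF$-nilpotence of $X$ itself rather than by a ring structure on $X$.
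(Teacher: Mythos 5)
Your argument is correct and essentially the paper's: both proofs reduce to producing a $t_\alpha$-element in the $G$-ring spectrum $\End(X)$ and then let it act on $X$ as an $\End(X)$-module via \cref{lem:closure}. The only (harmless) difference is that instead of citing $\exp_\calF(X)=\exp_\calF(\End(X))$ from Mathew--Naumann--Noel and applying \cref{thm:maint} to $\End(X)$ as a black box, you inline that proof, obtaining the map $r\colon D(E\calF_+^{<m})\rightarrow\End(X)$ with $r(1)=1$ directly as the adjoint of the retraction exhibiting $\exp_\calF(X)\leq m$; note also that only the implication ``$t_\alpha$-element in $\End(X)$ yields a $t_\alpha$-self map of $X$'' is ever used, so your stronger ``same data'' identification (whose converse direction would need more, e.g.\ dualizability of $X$) is not actually required.
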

\begin{proof}
By \cite[Corollary 4.15]{mathewnaumannnoel2017nilpotence}, we have $\exp_\calF(X) = \exp_\calF(\End(X))$. As $X$ is an $\End(X)$-module, a $t_\alpha$-element in $\End(X)$ induces a $t_\alpha$-self map on $X$, so apply \cref{thm:maint} to $\End(X)$.
\end{proof}

\section{General and local equivalences \texorpdfstring{$\Sigma^V X \simeq \Sigma^W X$}{SigmaV X simeq SigmaW X}}\label{sec:j}

In this section we make some observations about equivalences $\Sigma^V X\simeq \Sigma^W X$, possibly after localization, which need not be $t_{V-W}$-self maps.

\subsection{Stable equivalences of representation spheres}\label{ssec:htpyequivreps}

We begin by summarizing work of tom Dieck, Petrie, and Tornehave on stable equivalences between representation spheres. Given a compact Lie group $G$, the representation rings $RU(G)$ and $RO(G)$ come equipped with \textit{Adams operations} $\psi^k$. These can be computed by their action on characters, given by
\[
\chi_{\psi^k V}(g) = \chi_V(g^k).
\]
If $G$ is finite then these operations satisfy $\psi^k = \psi^l$ when $k\equiv l\pmod{|G|}$, and thus the operations $\psi^k$ for $k$ coprime to $G$ induce an action of
\[
\Gamma = (\bbZ/|G|)^\times
\]
on $RU(G)$ and $RO(G)$. See \cite[II \S 3]{atiyahtall1969group} for a good discussion of this action (as well as for earlier work on stably equivalent representations). Write $I(\Gamma)$ for the augmentation ideal of $\bbZ[\Gamma]$, so that if $M$ is a $\Gamma$-module then $M/I(\Gamma)M$ is identified with the orbits $M_\Gamma$.

\begin{theorem}\label{thm:coprime}
If $G$ is finite and $k$ is coprime to $|G|$, then there exists a stable map
\[
f\colon S^V\rightarrow S^{\psi^k V}
\]
which is an equivalence after inverting $k$. Moreover, if $V$ is complex then one can choose $f$ to satisfy
\[
\Phi^H f =  k^{\lfloor |V^H|/2\rfloor}
\]
for all $H\subset G$.
\end{theorem}
\begin{proof}
The first statement follows from the proof of \cite[Theorem 10.12]{dieck1979transformation}. The refinement is due to Tornehave and appears in \cite[Theorem 4]{dieckpetrie1978geometric}. See also \cite[Theorem 4.1]{tornehave1982equivariant} for the more delicate case where $V$ is not assumed to be complex.
\end{proof}

\begin{ex}\label{ex:powermap}
Let $T \subset \bbC^\times$ denote the circle group and $L = S(\bbC)$ be the tautological complex character of $T$. Then the $k$th power map
\[
\psi_k\colon S(L^n)\rightarrow S(L^{nk})
\]
on unit spheres is $T$-equivariant. Passing to unreduced suspensions this yields a map
\[
\psi_k\colon S^{L^n}\rightarrow S^{L^{nk}}
\]
with the property that
\[
\Phi^{C_d}\psi_k = \begin{cases}
k,&d\mid n,\\
0,&d\mid nk\text{ but }d\nmid n,\\
1,&d\nmid n.
\end{cases}
\]
In particular, if $m$ is coprime to $k$ then $\psi_k$ is an equivalence after restricting to $C_m\subset T$ and inverting $k$.
\tqed
\end{ex}

The following theorem now summarizes information about about stably equivalent representation spheres. 

Write $\Pic(A(G))$ for the Picard group of $A(G) = \pi_0 S_G$. For a spectrum or abelian group $M$ and integer $n$, write $M_{(n)} = M[p^{-1}:\gcd(p,n) = 1]$.

\begin{theorem}\label{thm:tdp}
Let $G$ be a compact Lie group and $\alpha \in RO(G)$. The following are equivalent:
\begin{enumerate}
\item $|\alpha^H| = 0$ for all $H\subset G$.
\item $\pi_\alpha S_G \in \Pic(A(G))$, with
\[
\pi_{\alpha+\star}X \cong \pi_\alpha S_G \otimes_{A(G)} \pi_\star X
\]
for any $G$-spectrum $X$.
\item There exists an equivalence $S^{n\alpha}\simeq S^0$ for some $n\geq 1$.
\end{enumerate}
If $G$ is finite, then these are moreover equivalent to the following:
\begin{enumerate}[resume]
\item $|\alpha^C| = 0$ for all cyclic $C\subset G$.
\item $\alpha \in I(\Gamma)\cdot RO(G)$.
\item There exists an equivalence $S^\alpha_{(|G|)}\simeq S_{(|G|)}$.
\end{enumerate}
In addition,
\begin{enumerate}[resume]
\item If $\alpha \in I(\Gamma)^2\cdot RO(G)$ then $S^\alpha\simeq S^0$, and
\item The converse holds if $G$ is a $p$-group.
\end{enumerate}
\end{theorem}
\begin{proof}
(1)$\Rightarrow$(2): This is \cite[Theorem 1]{dieckpetrie1978geometric}.

(2)$\Rightarrow$(3): The picard group $\Pic(A(G))$ has finite exponent \cite[Equation 32]{dieckpetrie1978geometric}, and thus
\[
A(G)\cong \pi_0 S_G\cong (\pi_\alpha S_G)^{\otimes_\alpha n}\cong \pi_{n\alpha}S_G
\]
for some $n\geq 1$. The image of $1$ under such an isomorphism is an invertible element in $\pi_{n\alpha}S_G$, giving an equivalence $S^{n\alpha}\simeq S^0$.

(3)$\Rightarrow$(1): If $S^{n\alpha}\simeq S^0$ for some $n\geq 1$, then applying $\Phi^H$ implies $S^{n|\alpha^H|}\simeq S^0$, and thus $|\alpha^H| = 0$ for all $H\subset G$.

(1)$\Leftrightarrow$(4)$\Leftrightarrow$(5): This is \cite[Proposition 9.2.6]{dieck1979transformation}.

(5)$\Rightarrow$(6): This follows from \cref{thm:coprime}.

(6)$\Rightarrow$(1): Same proof as (3)$\Rightarrow$(1).

(7,8): These are \cite[Theorems 9.1.4, 9.1.5]{dieck1979transformation}.
\end{proof}

We end this subsection with some comments on how these results can be applied in practice. Say that two virtual representations $\alpha$ and $\alpha'$ are \textit{locally $J$-equivalent} if $\alpha-\alpha' \in I(\Gamma)\cdot RO(G)$. \cref{thm:tdp} says that this is equivalent to the existence of a unit $p_{\alpha'-\alpha}\in \pi_{\alpha'-\alpha}(S_G)_{(|G|)}$. If one finds an algebraic witness to $\alpha'-\alpha \in I(\Gamma)\cdot RO(G)$, for example if $\alpha' = \psi^k\alpha$, then \cref{thm:coprime} gives some control over the behavior of (a choice for) $p_{\alpha'-\alpha}$.  Moreover, in many basic cases \cref{ex:powermap} and variations are already sufficient and give completely explicit choices of $p_{\alpha'-\alpha}$.

\begin{ex}
If $Z$ is a compact $G$-space and $\alpha$ vanishes in $KO_G^0(Z)$, then $\alpha$ lifts to $b \in \widetilde{KO}{}_G^0(SZ)$ and \cref{thm:jhom} produces a unit $t_\alpha = J(b) \in \pi_\alpha D(\Sigma^\infty_+ Z)$ with good properties: for example, $\res^G_e(t_\alpha) = J(\res^G_e(b)) \in \pi_0^e D(\Sigma^\infty_+ Z)$ is in the image of the classical $J$-homomorphism. Thus if $\alpha'$ is locally $J$-equivalent to $\alpha$ then one obtains a unit
\[
p_{\alpha'-\alpha} \cdot t_\alpha \in \pi_{\alpha'}^GD(\Sigma^\infty_+Z)_{(|G|)}
\]
with similarly good properties: for example, $\res^G_e(p_{\alpha'-\alpha}\cdot J(b)) = k\cdot J(\res^G_e(b))$ for some integer $k$ coprime to $|G|$.
\tqed
\end{ex}

\begin{ex}\label{ex:jdiff}
In the situation of \cref{thm:hfpss}, if $\alpha'$ is locally $J$-equivalent to $\alpha$ then one also has
\[
d_f(t_{\alpha'}) = \pm \tilde{J}(b) t_{\alpha'}
\]
in the homotopy fixed point spectral sequence. Indeed, $p_{\alpha'-\alpha} \in \pi_{\alpha'-\alpha}^G(S_G)_{(|G|)}$ is detected by $k\cdot t_{\alpha'}t_{\alpha}^{-1}$ for some integer $k$ coprime to $|G|$, which must then be a permanent cycle. As the spectral sequence is $|G|$-torsion in positive filtration, it follows that $t_{\alpha'}t_{\alpha}^{-1}$ is a permanent cycle. Thus
\[
d_f(t_{\alpha'}) = d_f(t_{\alpha'}t_\alpha^{-1}t_{\alpha}) = t_{\alpha'}t_\alpha^{-1}\cdot d_f(t_\alpha) = t_{\alpha'}t_{\alpha}^{-1}\cdot \pm \tilde{J}(b) t_\alpha = \pm \tilde{J}(b) t_{\alpha'}
\]
as claimed. Similarly considerations hold for the more general equivariant Atiyah--Hirzebruch spectral sequences handled in \cref{sec:ahss}.
\tqed
\end{ex}

One might ask to what extent these techniques account for everything, and to that end we leave the following question.

\begin{question}
Let $G$ be a finite group and $Z$ be a compact $G$-space. Is every unit in $\pi_\star^G D(\Sigma^\infty_+ Z)_{(|G|)}$ of the form $c\cdot J(b)$, where $b\in \widetilde{KO}{}_G^0(SZ)$ and $c$ lifts to $\pi_\star (S_G)_{(|G|)}^\times$?
\tqed
\end{question}

\subsection{General periodicities}

Throughout this subsection $G$ is a finite group. We now explain how \cref{thm:tdp} implies a general theorem about the eventual existence of equivalences $\Sigma^{n\alpha}X\simeq X$.

\begin{lemma}\label{lem:1dim}
Let $Z$ be a $1$-dimensional $G$-complex. If $Z^H \neq \emptyset \Rightarrow |\alpha^H| = 0$ for all subgroups $H\subset G$, then there is an invertible element $t \in \pi_{n\alpha}^GD(\Sigma^\infty_+Z)$ for some $n\geq 1$.
\end{lemma}
\begin{proof}
As $Z$ is $1$-dimensional, it can be built as a homotopy coequalizer of the form
\begin{center}\begin{tikzcd}
\coprod_{i\in I}G/H_i\ar[r,"f",shift left]\ar[r,"g"',shift right]&\coprod_{j\in J}G/H_j\ar[r]&Z
\end{tikzcd}.\end{center}
It follows that $D(\Sigma^\infty_+Z)$ is an equalizer of the form
\begin{center}\begin{tikzcd}
D(\Sigma^\infty_+Z)\ar[r]&\prod_{j\in J}D(G/H_{j+})\ar[r,"f^\ast",shift left]\ar[r,"g^\ast"',shift right]&\prod_{i\in I}D(G/H_{i+})
\end{tikzcd}.\end{center}
Note that in general $\pi_\alpha^G D(G/H_+) \cong \pi_{\alpha_H} S_H$. Applying \cref{thm:tdp}(1$\Rightarrow$3) to the restrictions $\alpha_{H_j}$, as $G$ is finite we can find some $n\geq 1$ for which there exist equivalences $u_j\colon S^{n\alpha}_{H_j}\simeq S^0_{H_j}$. These determine an invertible element $u \in \pi_{n\alpha}^G \prod_{j\in J}D(G/H_{j+})$. Our conventions from \cref{rmk:signs} are such that $\pi_{n\alpha}^G$ is only defined up to a sign, but this issue goes away after passing to $u^2 \in \pi_{\bbC\otimes n\alpha}^G\prod_{j\in J}D(G/H_{j+})$. Moreover, this square is guaranteed to satisfy $f^\ast(u^2) = g^\ast(u^2)$, so $u^2$ lifts to an invertible element $t\in \pi_{2n\alpha}^G D(\Sigma^\infty_+ Z)$. 
\end{proof}

\begin{theorem}\label{thm:jglob}
Let $G$ be a finite group and $R$ be a $G$-ring spectrum, and suppose that $\Phi^C R \neq 0 \Rightarrow |\alpha^C| = 0$ for all cyclic subgroups $C\subset G$. Then there exists an invertible element $t\in \pi_{n\alpha}^GR$ for some $n\geq 1$. The converse holds if each $\Phi^C R$ is bounded below.
\end{theorem}
\begin{proof}
Define the family
\[
\calF[\alpha] = \{H\subset G : C\subset H \text{ cyclic}\Rightarrow |\alpha^C|=0\},
\]
and consider the Atiyah--Hirzebruch spectral sequence
\[
E_2 = H^\ast_G(E\calF[\alpha];\ul{\pi}_\star R)\Rightarrow R^{\ast-\star}_G(E\calF[\alpha]).
\]
The argument in \cref{lem:characterfamily} adapts, in conjunction with \cref{thm:tdp}(1$\Leftrightarrow$4), to show that $R$ is $\calF[\alpha]$-nilpotent, so this converges to $\pi_\star^G R$ with a horizontal vanishing line at a finite page. \cref{lem:1dim} ensures that there exists an invertible class $u\in H^0_G(\calF;\ul{\pi}_{k\alpha}R)$ for some $k\geq 1$. A theorem of Dress shows that $H^{>0}_G(E\calF;\ul{\pi}_\star R)$ is killed by $|G|$ \cite[Proposition 21.3]{greenleesmay1995generalized}. Thus the Leibniz rule implies that if $u^i$ survives to the $E_r$-page then $u^{i|G|}$ survives to the $E_{r+1}$-page. By the horizontal vanishing line it follows that some power $u^{|G|^m}$ is a permanent cycle, and we can take $t$ to be any class detected by this power.

Conversely, if $t\in \pi_{n\alpha}^GR$ is invertible, then $\Phi^C t \in \pi_{n|\alpha^C|}\Phi^C R$ is invertible. As $\Phi^CR$ is bounded below, this is only possible if either $\Phi^C R = 0$ or $|\alpha^C| = 0$ as claimed.
\end{proof}

We deduce \cref{thm:generalequiv} as a corollary.

\begin{theorem}\label{cor:jg}
Let $G$ be a finite group and $X$ be a compact $G$-spectrum. Then there is an equivalence $\Sigma^{n\alpha}X\simeq X$ for some $n\geq 1$ if and only if $\Phi^C X \neq 0 \Rightarrow |\alpha^C| = 0$ for all cyclic subgroups $C\subset G$.
\end{theorem}
\begin{proof}
Apply \cref{thm:jglob} to the $G$-ring spectrum $\End(X)$.
\end{proof}

\subsection{The equivariant Adams conjecture}

Throughout this subsection $G$ is a finite group and $Z$ is a compact $G$-space. We now use work of tom Dieck and Hauschild \cite[Chapter 11]{dieck1979transformation} on the equivariant Adams conjecture to obtain more information about units in $\pi_\star^G D(\Sigma^\infty_+ Z)_{(|G|)}$. See also work of McClure \cite{mcclure1983on}. The starting point is the following. Say that a stable map $f\colon S(\xi)\rightarrow S(\zeta)$ of sphere bundles over $Z$ has \textit{fiberwise degree dividing a power of $k$} if for all $H\subset G$ and $z\in Z^H$, the induced map $f^H_z\colon S(\xi^H)_z\rightarrow S(\zeta^H)_z$ on fibers has degree dividing a power of $k$.

\begin{theorem}[{\cite[Theorem 11.3.8, Proposition 11.4.4]{dieck1979transformation}}]\label{thm:adamsconj}
Let $\xi$ be a vector bundle over $Z$ and $k$ be an odd positive integer coprime to $|G|$. Then there exist stable maps $f\colon S(\xi)\rightarrow S(\psi^k\xi)$ and $g\colon S(\psi^k \xi)\rightarrow S(\xi)$ of fiberwise degree dividing a power of $k$.
\qed
\end{theorem}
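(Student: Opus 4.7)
The plan is to prove this by combining an equivariant splitting principle with a Becker--Gottlieb transfer, ultimately reducing to the elementary observation that for a complex line bundle $L$ one has $\psi^k L \cong L^{\otimes k}$ and the power map $v\mapsto v^{\otimes k}$ restricts on unit spheres to a fiberwise degree-$k$ map $\nu_k\colon S(L)\to S(L^{\otimes k})$. The coprimality hypothesis $\gcd(k,|G|)=1$ will be invoked precisely so as to separate the $k$-divisible content of the power construction from the $|G|$-divisible content that appears in equivariant Euler characteristics.

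First I would pass to the equivariant flag bundle $\pi\colon \mathrm{Fl}(\xi)\to Z$, over which $\pi^\ast\xi$ splits as a Whitney sum of line bundles $L_1\oplus\cdots\oplus L_n$ and, accordingly, $\pi^\ast\psi^k\xi$ splits as $L_1^{\otimes k}\oplus\cdots\oplus L_n^{\otimes k}$. The fiberwise join of the $k$-th power maps $\nu_k$ produces a stable fiberwise map $\tilde f\colon S(\pi^\ast\xi)\to S(\pi^\ast\psi^k\xi)$ of fiberwise degree $k^n$. Viewing each $\nu_k$ as an equivariant $k$-fold covering and forming its Becker--Gottlieb transfer produces, analogously, a reverse map $\tilde g$ whose fiberwise degree is again a power of $k$. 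I would then descend back to $Z$ using the equivariant Becker--Gottlieb transfer $\tau\colon \Sigma^\infty_+ Z\to \Sigma^\infty_+ \mathrm{Fl}(\xi)$: the composite $\pi_\ast\tau$ acts on fibers over $z\in Z^H$ by the equivariant Euler characteristic $\chi^H(\mathrm{Fl}_z)$ in the Burnside ring $A(H)$, and since $\gcd(k,|G|)=1$, multiplying by a suitable power of $k$ clears the $|G|$-part of $\chi^H$ while keeping the fiberwise degree a power of $k$, yielding the required maps $f$ and $g$ on $Z$.

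The main obstacle is making all of this work genuinely equivariantly when $F=\bbR,\bbH$. The splitting principle is considerably more delicate in the real and quaternionic settings: one must decompose $\xi$ not into ordinary line bundles but into bundles of the form $\Ind_H^G L$ for cyclic subgroups $H\subset G$ and suitable $H$-representations $L$, using Brauer/Artin induction to control the outcome after inverting $|G|$. The equivariant Euler characteristics of the resulting flag bundles live in Burnside rings and involve characters at each cyclic subquotient, and controlling their interaction with $\psi^k$ is the technical core of Chapter~11 of tom Dieck's book. Producing the reverse map $g$ is moreover subtler than producing $f$, since it has no direct geometric incarnation and must be built from fiberwise duality for the $k$-fold cover underlying $\nu_k$; here too it is exactly the coprimality of $k$ and $|G|$ that keeps the output inside the class of stable maps whose fiberwise degree divides a power of $k$.
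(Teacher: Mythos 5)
This statement is not proved in the paper at all: it is quoted verbatim from tom Dieck's book (Theorem 11.3.8 and Proposition 11.4.4 of \emph{Transformation Groups}), so there is no in-paper argument to compare against. Judged on its own terms, your sketch has a genuine gap at its central step. The descent from the flag bundle $\mathrm{Fl}(\xi)$ back to $Z$ via the Becker--Gottlieb transfer does not work as described: the composite of transfer and projection is controlled by the Euler characteristic of the flag fiber, which is the order of a Weyl group and hence divisible by $2$ and by other primes having nothing to do with either $k$ or $|G|$. Since ``fiberwise degree dividing a power of $k$'' is a multiplicative constraint, you cannot ``clear'' these factors by composing with degree-$k^e$ maps; coprimality of $k$ and $|G|$ is of no help against a factor of $2$ when $k$ is odd. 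This is precisely why the classical Becker--Gottlieb proof of the Adams conjecture does not use the full flag bundle but reduces the structure group to the normalizer $N(T)$ of a maximal torus, where $\chi\bigl(O(n)/N(T)\bigr)=1$, so the transfer gives an honest splitting with no correction factor. Without that (or some equivariant surrogate for it), your argument produces maps over $\mathrm{Fl}(\xi)$ but never gets them down to $Z$.

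Two further points. First, the reverse map $g$ cannot be extracted by ``fiberwise duality for the $k$-fold cover'': fiberwise Spanier--Whitehead duality applied to $f\colon S(\xi)\to S(\psi^k\xi)$ gives a map $S(-\psi^k\xi)\to S(-\xi)$, and after adding $\xi\oplus\psi^k\xi$ to both sides one is back to a map in the \emph{same} direction; producing $g$ requires a separate input (in tom Dieck's treatment this is where the second cited result and the mod-$k$ Dold theorem enter, together with the relation $\psi^l\psi^k=\psi^{kl}$ and the periodicity of Adams operations modulo $|G|$ on the representation-theoretic part). Second, you explicitly defer the equivariant splitting principle --- the decomposition of a $G$-bundle into pieces induced from (hyper)elementary subgroups and the induction-theoretic bookkeeping --- and that deferred step is exactly the technical core of the theorem, not a routine adaptation. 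As it stands, the proposal is a plausible plan for the nonequivariant complex case minus its key transfer trick, rather than a proof of the equivariant statement being cited.
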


The Thom spectrum $\Th(\xi)$ of a vector bundle $\xi$ depends only on its associated sphere bundle $S(\xi)$, and is functorial in stable maps. The condition that a map $f\colon S(\xi)\rightarrow S(\zeta)$ has fiberwise degree dividing a power of $k$ ensures that it induces an equivalence $\Th(\xi)[\tfrac{1}{k}]\rightarrow\Th(\zeta)[\tfrac{1}{k}]$. Hence if we define
\[
J_G^{\alg}(Z) = KO_G^0(Z)/(x-\psi^k x :x\in KO_G^0(Z),\, 2\nmid k,\, \gcd(k,|G|) = 1),
\]
and write $j\colon KO_G^0(Z)\rightarrow J_G^\alg(Z)$ for the quotient map, then we have the following.

\begin{prop}\label{lem:jper}
If $j(\alpha) = 0$ in $J_G^\alg(Z)$, then there is an invertible element in $\pi_\alpha^G D(\Sigma^\infty_+Z)_{(|G|)}$.
\end{prop}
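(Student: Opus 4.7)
The plan is to run the proof of Proposition~\ref{lem:kt} after replacing ``equivalence of vector bundles'' with ``stable equivalence of $|G|$-local sphere bundles'', and invoking the equivariant Adams conjecture (Theorem~\ref{thm:adamsconj}) in place of a direct equality in $KO_G^0$.

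First, I would write $\alpha = V - W$ as a difference of $G$-representations and unwind $j(\alpha) = 0$ into an equation
\[
[Z_V] - [Z_W] = \sum_{i=1}^{n}(x_i - \psi^{k_i}x_i) \in KO_G^0(Z),
\]
with each $k_i$ odd and coprime to $|G|$. After stabilizing by trivial bundles, I may take each $x_i$ to be the class of an honest $G$-equivariant vector bundle $\xi_i$ over $Z$, and similarly for $\psi^{k_i}\xi_i$ after further stabilization. By Theorem~\ref{thm:adamsconj}, there are stable sphere bundle maps $S(\xi_i) \to S(\psi^{k_i}\xi_i)$ of fiberwise degree dividing a power of $k_i$; since $k_i$ is coprime to $|G|$, these become honest equivalences of stable sphere bundles after $|G|$-localization. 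Using that Whitney sum of vector bundles corresponds to addition of $K$-classes and to fiberwise (stable) sum of sphere bundles, the above identity assembles the individual Adams-conjecture equivalences into a single stable equivalence $S(Z_V)_{(|G|)} \simeq S(Z_W)_{(|G|)}$ of sphere bundles over $Z$.

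Taking Thom spectra (which depend only on the underlying stable sphere bundle) then produces a $D(\Sigma^\infty_+Z)$-linear equivalence $\Sigma^V \Sigma^\infty_+Z \rightsim \Sigma^W \Sigma^\infty_+Z$ in the $|G|$-local category, and the duality argument of Lemma~\ref{lem:fiberwise} converts this to an invertible element of $\pi_\alpha D(\Sigma^\infty_+Z)_{(|G|)}$.

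The hard part will be the assembly step: turning the formal equation $\alpha = \sum(x_i - \psi^{k_i}x_i)$, which allows cancellations and involves Adams operations on possibly nontrivial bundles $\xi_i$, into an actual stable sphere bundle equivalence between the two \emph{trivial} bundles $Z_V$ and $Z_W$. Concretely, one must pad by an auxiliary $G$-representation $U$ to produce an on-the-nose vector bundle isomorphism $Z_V \oplus \bigoplus_i \psi^{k_i}\xi_i \oplus U \cong Z_W \oplus \bigoplus_i \xi_i \oplus U$, combine this with the fiberwise Adams-conjecture equivalences, and verify that the resulting equivalence is a map of sphere bundles over $Z$ (so as to induce a $D(\Sigma^\infty_+Z)$-linear equivalence on Thom spectra rather than merely an equivalence of stable homotopy types).
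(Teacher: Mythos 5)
Your proposal is correct and follows essentially the same route as the paper: use \cref{thm:adamsconj} to produce a stable sphere-bundle map $S(Z_V)\to S(Z_W)$ of fiberwise degree coprime to $|G|$, Thomify to obtain a $|G|$-locally invertible, $D(\Sigma^\infty_+Z)$-linear map $\Sigma^V\Sigma^\infty_+Z\to\Sigma^W\Sigma^\infty_+Z$, and convert this into an invertible element of $\pi_\alpha D(\Sigma^\infty_+Z)_{(|G|)}$ by the duality discussion surrounding \cref{lem:fiberwise}. The assembly step you flag as the hard part is precisely what the paper absorbs into the citation of tom Dieck (Proposition 11.4.4); the only small imprecision is that equivariantly one must stabilize by representation bundles $Z_U$ rather than trivial bundles, and the resulting correction terms $[Z_U]-[Z_{\psi^k U}]$ are again of the allowed form $x-\psi^k x$, so your argument goes through unchanged.
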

\begin{proof}
This follows from \cref{thm:adamsconj}, using the same considerations as in \cref{ssec:k} to translate between stable maps $S(Z_V)\rightarrow S(Z_W)$ and elements in $\pi_{V-W}^GD(\Sigma^\infty_+ Z)$.
\end{proof}

Our main goal in the rest of this section is to prove a converse to \cref{lem:jper} when $G$ is a $p$-group.

\begin{lemma}\label{lem:dest}
Write $\alpha = V-W$ as a difference of representations. If there is an invertible class in $\pi_\alpha^G D(\Sigma^\infty_+Z)[\tfrac{1}{k}]$, then there is a stable map $f\colon S(Z_V)\rightarrow S(Z_W)$ of fiberwise degree dividing a power of $k$.
\end{lemma}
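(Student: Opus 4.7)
The plan is to interpret the invertible element as a $D(\Sigma^\infty_+Z)$-linear stable map of Thom spectra, destabilize it to a fiberwise pointed $G$-map, and extract a sphere bundle map of the desired fiberwise degree---paralleling the classical nonequivariant destabilization argument.

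First I would multiply the given invertible $t\in\pi_\alpha D(\Sigma^\infty_+Z)[\tfrac{1}{k}]$ by a sufficiently large power of $k$ to produce a class $\tau \in \pi_\alpha D(\Sigma^\infty_+Z)$ which remains invertible after inverting $k$. Under the equivalence $\Sigma^V\Sigma^\infty_+Z\simeq\Th(Z_V)$ from \cref{ssec:k}, $\tau$ corresponds to a $D(\Sigma^\infty_+Z)$-linear stable map $\varphi\colon\Th(Z_V)\to\Th(Z_W)$. After choosing a sufficiently large $G$-representation $U$ one can realize $\varphi$ unstably as a pointed $G$-map $\tilde{\varphi}\colon\Th(Z_{V\oplus U})\to\Th(Z_{W\oplus U})$; the $D(\Sigma^\infty_+Z)$-linearity of $\varphi$ lets us arrange (possibly after further enlarging $U$) that $\tilde\varphi$ is fiberwise with respect to the projection to $Z$. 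Lifting through the cofiber sequences $S(\xi)\to D(\xi)\to\Th(\xi)$, using that the disk bundles are fiberwise contractible, then yields a fiberwise $G$-equivariant sphere bundle map $f\colon S(Z_{V\oplus U})\to S(Z_{W\oplus U})$---a stable map of sphere bundles in the sense of \cref{ssec:adamsconj}.

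For the fiberwise degree condition, fix $H\subset G$ and $z\in Z^H$, and restrict $\tau$ along the $H$-equivariant inclusion $z\colon\ast\to Z$ to obtain an element of $\pi_\alpha^H S$ which is still invertible after inverting $k$. Applying geometric $H$-fixed points yields a unit in $\pi_{|\alpha^H|}S[\tfrac{1}{k}]$; since nonzero stable stems are torsion, invertibility forces $|\alpha^H|=0$, and the resulting unit is then a nonzero integer all of whose prime factors divide $k$. By naturality of the Thom and destabilization constructions, this integer agrees with the degree of $f^H_z\colon S((V\oplus U)^H)\to S((W\oplus U)^H)$, which therefore divides a power of $k$.

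The principal obstacle is the destabilization step: rigidifying a $D(\Sigma^\infty_+Z)$-linear stable map between Thom spectra to a genuinely fiberwise sphere bundle map after equivariant stabilization. For a finite $G$-complex $Z$ this can be handled by an inductive obstruction-theoretic argument along the equivariant cells of $Z$, using the connectivity of equivariant mapping spaces of spheres in the stable range; alternatively one can invoke the comparison between $D(\Sigma^\infty_+Z)$-modules and parametrized $G$-spectra over $Z$.
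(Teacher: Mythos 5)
Your overall strategy---normalize the invertible class to an integral one, destabilize, and read off fiberwise degrees at fixed points---is the same as the paper's, and your degree argument (restricting along $z\colon\ast\rightarrow Z$ $H$-equivariantly and applying $\Phi^H$, so that invertibility after inverting $k$ forces $|\alpha^H|=0$ and a degree dividing a power of $k$) is fine. The genuine gap is the step you yourself flag as the principal obstacle: you never actually produce the fiberwise sphere-bundle map, and the mechanism you sketch does not work as stated. A fiberwise pointed map of fiberwise Thom spaces does not lift through the cofiber sequences $S(\xi)\rightarrow D(\xi)\rightarrow \Th(\xi)$ merely because disk bundles are fiberwise contractible---maps of quotients do not lift to maps of pairs---and the proposed cell-by-cell obstruction theory (or appeal to parametrized $G$-spectra) is heavy machinery that is neither carried out nor needed.

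Both difficulties are illusory because the bundles $Z_V$ and $Z_W$ are trivial. Instead of a $D(\Sigma^\infty_+Z)$-linear map of Thom spectra $\Th(Z_V)\rightarrow\Th(Z_W)$, use the adjoint form of your class: a stable map $\phi\colon\Sigma^V\Sigma^\infty_+Z\rightarrow S^W$ whose target is a plain sphere. Compactness of $Z$ destabilizes this, after enlarging $V$ and $W$ by a common representation, to an honest pointed $G$-map $f\colon\Sigma^V(Z_+)\rightarrow S^W$; no linearity or fiberwise rigidification enters. Then $\tilde f(v,z)=(f(v\wedge z),z)$ is automatically a fiberwise map $S^V\times Z\rightarrow S^W\times Z$, and the identification $S^V\cong S(V\oplus\bbR)$ exhibits it as a map of sphere bundles $S(Z_{V\oplus\bbR})\rightarrow S(Z_{W\oplus\bbR})$, i.e.\ a stable map $S(Z_V)\rightarrow S(Z_W)$, whose fiberwise degrees are exactly the degrees of the maps $f^H_z$ you already controlled. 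If you prefer to keep your Thom-space framing, the correct fix is the same observation: the fiberwise one-point compactification of the trivial bundle $Z_{V\oplus U}$ \emph{is} the sphere bundle of $Z_{V\oplus U\oplus\bbR}$, rather than something reached by lifting through disk bundles.
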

\begin{proof}
We may suppose ourselves given a map $\phi\colon \Sigma^V\Sigma^\infty_+Z\rightarrow S^W$ associated to an element $u\in \pi_\alpha^G D(\Sigma^\infty_+Z)$ which becomes invertible after inverting $k$. As $Z$ is compact, after possibly enlarging $V$ and $W$ we may write $\phi$ as the stabilization of a map
\[
f\colon \Sigma^V(Z_+)\rightarrow S^W.
\]
The assumption that $u$ is invertible after inverting $k$ ensures that for all $H\subset G$ and $z\in Z^H$, the induced map
\[
f_z^H\colon S^{V^H}\rightarrow S^{W^H},\qquad f_z^H(v) = f(v\wedge z)
\]
has degree dividing a power of $k$. As $S^V \cong S(V+1)$, it follows that
\[
\tilde{f}\colon S^V\times Z\rightarrow S^W\times Z,\qquad \tilde{f}(v,z) = (f(v\wedge z),z)
\]
defines a map $S(Z_{V+1})\rightarrow S(Z_{W+1})$, hence a stable map $S(Z_V)\rightarrow S(Z_W)$, of fiberwise degree dividing a power of $k$.
\end{proof}

\begin{lemma}\label{lem:deloc}
If $j(\alpha) = 0$ in $J^\alg_G(Z)_{(|G|)}$, then $j(\alpha)=0$ in $J^\alg_G(Z)$.
\end{lemma}
\begin{proof}
Deferred to the next subsection, where it appears as \cref{cor:zc}.
\end{proof}

Now fix a prime $p$, and suppose that $G$ is a $p$-group. As discussed in the introduction, the $G$-spectrum $KU_G/p$ plays a similar role in $G$-equivariant homotopy theory as $KU/p$ does in nonequivariant homotopy theory. For example, if $\ell$ generates a dense subgroup of $\bbZ_p^\times/\{\pm 1\}$---and we may as well assume $\ell$ is odd here---and we define
\[
J_G = \Fib\left(\psi^\ell-\psi^1\colon (KO_G)_p^\wedge\rightarrow (KO_G)_p^\wedge\right),
\]
then for any compact $G$-space $Z$ there is an equivalence
\[
L_{KU_G/p}D(\Sigma^\infty_+ Z)\simeq F(\Sigma^\infty_+ Z,J_G),
\]
where $L_{KU_G/p}$ denotes Bousfield localization with respect to $KU_G/p$. Write
\[
j_{K(1)}^Z\colon RO(G)\rightarrow  KO_G^0(Z)\rightarrow J_G^1(Z)
\]
for the resulting boundary map. We can now give the following.

\begin{theorem}\label{thm:k1}
Let $Z$ be a compact $G$-space and $\alpha \in RO(G)$. Then there exists an invertible element in $\pi_\alpha^G D(\Sigma^\infty_+ Z)_{(p)}$ if and only if $j_{K(1)}^Z(\alpha) = 0$.
\end{theorem}
\begin{proof}
If there exists an invertible element in $\pi_\alpha^G D(\Sigma^\infty_+ Z)_{(p)}$, then after writing $\alpha = V-W$ as a difference of representations, \cref{lem:dest} provides a stable map $S(Z_V)\rightarrow S(Z_W)$ of fiberwise degree coprime to $p$. Now \cite[Theorem 11.4.1, Proposition 11.4.2]{dieck1979transformation} implies that $j_{K(1)}^Z(\alpha) = 0$ in $J_G^1(Z)$.

Conversely, if $j_{K(1)}^Z(\alpha) = 0$, then as $p$-completion is faithful for finitely generated $\bbZ_{(p)}$-modules we can decomplete to say that $\alpha$ is sent to zero in $\coker(\psi^\ell-\psi^1\colon KO_G^0(Z)_{(p)}\rightarrow KO_G^0(Z)_{(p)})$, and thus also in $J_G^{\alg}(Z)_{(p)}$. By \cref{lem:deloc} we deduce $j(\alpha) = 0$ in $J_G^{\alg}(Z)$ and thus there is an invertible element in $\pi_\alpha^G D(\Sigma^\infty_+Z )_{(p)}$ by \cref{lem:jper}.
\end{proof}

\subsection{Localization arguments}

We now make good on \cref{lem:deloc}. Given odd positive integers $\vec{r} = (r_1,\ldots,r_t)$ coprime to $|G|$, set $r = r_1\cdots r_t$ and define
\[
J^\alg_{G,\vec{r}}(Z) = KO_G^0(Z)[\tfrac{1}{r}]/(x-\psi^{r_i}x:x\in KO_G^0(Z),\, 1\leq i \leq r).
\]
As $\psi^{r_i}\colon KO_G^0(Z)[\tfrac{1}{r}]\rightarrow KO_G^0(Z)[\tfrac{1}{r}]$ is a stable operation, it commutes with restrictions and transfers, and so $J^\alg_{G,\vec{r}}(Z)$ is a quotient Mackey functor of $KO_G^0(Z)[\tfrac{1}{r}]$. It relates to $J^\alg_G(Z)$ via a commutative diagram
\begin{center}\begin{tikzcd}
KO_G^0(Z)\ar[d,"j_{\vec{r}}"]\ar[r,"j"]&J^\alg_G(Z)\ar[d]\\
J^\alg_{G,\vec{r}} (Z)\ar[r,two heads]&J^\alg_G(Z)[\tfrac{1}{r}]
\end{tikzcd}.\end{center}
If $k\equiv l \pmod{|G|}$ then $\psi^k=\psi^l$ on $RO(G)$, and it follows that the bottom surjection is an isomorphism for $Z = \ast$ provided that $\vec{r}$ generates $(\bbZ/|G|)^\times$.

\begin{lemma}\label{lem:lemgen}
Suppose that $\vec{r}$ generates $(\bbZ/|G|)^\times$ and $Z^C\neq\emptyset\Rightarrow |\alpha^C|=0$ for all cyclic $C\subset G$. Then $j(\alpha) \in J^\alg_{G,\vec{r}} (Z)$ is $|G|$-power torsion.
\end{lemma}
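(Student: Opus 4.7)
The plan is to parallel the proof of \cref{prop:mainlem}, replacing the cohomology theory $KF_G$ with the Mackey-functor quotient $\ul{J_{G,\vec{r}}}(Z)$ of $\ul{KO_G^0}(Z)[\tfrac{1}{r}]$ (this is legitimate because the Adams operations $\psi^{r_i}$ are natural, hence commute with restrictions and transfers).

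First, I would promote the hypothesis on cyclic subgroups to all subgroups in the style of \cref{prop:mainlem}: if $Z^H \neq \emptyset$ then $Z^C \neq \emptyset$ for every cyclic $C \subset H$, so \cref{lem:repj} applied to $\alpha|_H \in RO(H)$ gives both $|\alpha^K|=0$ for every $K \subset H$ and $j(\alpha|_H) = 0$ in $J_H(\ast)$. Next, to show $j(\alpha) \in \ul{J_{G,\vec{r}}}(Z)(G/G)$ is $|G|$-power torsion, note that for any Mackey functor $M$ the restriction $M(G/G)[\tfrac{1}{|G|}]\to M(G/e)[\tfrac{1}{|G|}]$ is injective (its kernel being killed by $\tr^G_e\circ\res^G_e = |G|$), so by the Mackey functor splitting \cref{lem:macksplit}(1) it suffices to show that the image of $j(\alpha)$ in each $\tau_H \ul{J_{G,\vec{r}}}(Z)[\tfrac{1}{|G|}]$ is zero.

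For $H$ with $Z^H = \emptyset$, I would apply \cref{lem:macksplit}(2) to the spectrum $F(\Sigma^\infty_+Z,KO_G)[\tfrac{1}{r}]$: since $\Sigma^\infty_+ Z^H = 0$, this identifies $\tau_H\ul{KO_G^0}(Z)[\tfrac{1}{r}][\tfrac{1}{|G|}]$ with
\[
\pi_0 F(\Sigma^\infty_+Z^H,\Phi^H KO_G)[\tfrac{1}{r}][\tfrac{1}{|G|}] = 0,
\]
and passing to the quotient gives $\tau_H\ul{J_{G,\vec{r}}}(Z)[\tfrac{1}{|G|}] = 0$. For $H$ with $Z^H \neq \emptyset$, I already have $j(\alpha|_H) = 0$ in $J_H(\ast)$, and must transport this to $J_{H,\vec{r}}(\ast)$: since $\vec{r}$ surjects onto $(\bbZ/|H|)^\times$, every odd $k$ coprime to $|H|$ satisfies $\psi^k = \psi^{r_{i_1}}\cdots\psi^{r_{i_s}}$ as endomorphisms of $RO(H)$, and the identity
\[
\psi^a\psi^b - 1 = \psi^a(\psi^b - 1) + (\psi^a - 1)
\]
shows inductively that $\mathrm{im}(\psi^k-1) \subset \sum_i\mathrm{im}(\psi^{r_i}-1)$ in $RO(H)[\tfrac{1}{r}]$, so $j(\alpha|_H) = 0$ in $J_{H,\vec{r}}(\ast)$. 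Pulling back along $Z_H \to \ast$ then yields $j(\alpha_H) = 0$ in $J_{H,\vec{r}}(Z_H) = \ul{J_{G,\vec{r}}}(Z)(G/H)$, so the image in $\tau_H$ vanishes.

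The main obstacle is that last algebraic step, where the generating hypothesis on $\vec{r}$ is used to convert $J_H$-vanishing into $J_{H,\vec{r}}$-vanishing; the rest is a clean adaptation of the Mackey-functor localization argument behind \cref{lem:gtors} and \cref{prop:mainlem}.
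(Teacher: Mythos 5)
Your proposal follows the paper's own route: reduce, via the $|G|$-inverted Mackey functor splitting, to the $\tau_H$-pieces of the quotient Mackey functor $J_{G,\vec{r}}(Z)$; kill the pieces with $Z^H=\emptyset$ using geometric fixed points of the function spectrum together with right-exactness of $\tau_H$; and for $Z^H\neq\emptyset$ combine \cref{lem:repj} with the identification $J_H(\ast)[\tfrac{1}{r}]\cong J_{H,\vec{r}}(\ast)$, which you justify in more detail than the paper does, via the identity $\psi^{ab}-1=\psi^a(\psi^b-1)+(\psi^a-1)$, commutativity of Adams operations, the fact that $\psi^k$ on $RO(H)$ depends only on $k \bmod |H|$, and the surjectivity of $(\bbZ/|G|)^\times\rightarrow(\bbZ/|H|)^\times$. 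That extra detail is welcome; the structure of the argument is the same as in the paper, just with the reduction step spelled out rather than compressed into one citation of \cref{lem:macksplit}.

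One parenthetical claim in your reduction is false, though fortunately not load-bearing: for a general $G$-Mackey functor $M$ the composite $\tr^G_e\circ\res^G_e$ is \emph{not} multiplication by $|G|$, and $M(G/G)[\tfrac{1}{|G|}]\rightarrow M(G/e)[\tfrac{1}{|G|}]$ is \emph{not} injective. The identity $\tr\circ\res=[G:e]$ holds only for cohomological Mackey functors, and the Mackey functors appearing here are not cohomological: already for $Z=\ast$ and $G=C_2$ one has $\tr^{C_2}_e\res^{C_2}_e(V)=\dim(V)\cdot\bbR[C_2]$ in $RO(C_2)$, and $1-\sigma$ lies in the kernel of $\res^{C_2}_e$ while being non-torsion, so restriction to the bottom level fails to be injective even after inverting $2$. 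You do not need this claim: the splitting of \cref{lem:macksplit}(1) is a splitting of the value at $G/G$ (the $M(G/e)$ in the printed statement is a typo, as its use in \cref{lem:gtors} makes clear), and it directly reduces the lemma to showing that the image of $j(\alpha)$ in each $\tau_H(\,\cdot\,)[\tfrac{1}{|G|}]$ vanishes — which is exactly what the remainder of your argument establishes. Delete the parenthetical and the proof is correct as written.
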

\begin{proof}
The claim is that $j(\alpha) = 0$ in $J^\alg_{G,\vec{r}} (Z)[\tfrac{1}{|G|}]$. As $J^\alg_{\bs,\vec{r}} (Z)$ is a Mackey functor, we may use \cref{lem:macksplit} to reduce to showing that if $Z^H \neq \emptyset$ then $j_{H,\vec{r}}(\alpha) = 0$. 

So suppose $Z^H \neq \emptyset$. It follows that $Z^C\neq\emptyset$ for all cyclic subgroups $C\subset H$. Thus $|\alpha^C|=0$ for all cyclic subgroups $C\subset H$ by assumption, and so $j_H(\alpha) = 0$ in $J^\alg_H(\ast)$ by \cref{thm:tdp}(1$\Leftrightarrow$4). As $\vec{r}$ generates $(\bbZ/|G|)^\times$ we have $J^\alg_H(\ast)[\tfrac{1}{r}]\cong J^\alg_{H,\vec{r}}(\ast)$, implying $j_{H,\vec{r}}(\alpha) = 0$ in $J^\alg_{H,\vec{r}} (\ast)$. Pulling back along $Z\rightarrow\ast$ it follows that $j_{H,\vec{r}}(\alpha)=0$ in $J^\alg_{H,\vec{r}} (Z)$.
\end{proof}

\begin{prop}\label{prop:zc}
The class $j(\alpha) \in J^\alg_G(Z)$ has finite order if and only if $Z^C\neq\emptyset\Rightarrow|\alpha^C|=0$ for all cyclic subgroups $C\subset G$. In this case, its order divides a power of $|G|$.
\end{prop}
\begin{proof}
First suppose that $Z^C\neq\emptyset\Rightarrow|\alpha^C|=0$ for all cyclic subgroups $C\subset G$. By \cref{lem:lemgen} and the comparison map $J^\alg_{G,\vec{r}} (Z)\rightarrow J^\alg_G(Z)[\tfrac{1}{r}]$, we find that if $\vec{r}$ generates $(\bbZ/|G|)^\times$ then $j(\alpha) = 0$ in $J^\alg_G(Z)[\tfrac{1}{r|G|}]$. So fix $\vec{r}$ generating $(\bbZ/|G|)^\times$ and $\vec{s}$ generating $(\bbZ/r|G|)^\times$. Then $\vec{s}$ also generates $(\bbZ/|G|)^\times$, so $j(\alpha) = 0$ in $J^\alg_G(Z)[\tfrac{1}{r|G|}]$ and $J^\alg_G(Z)[\tfrac{1}{s|G|}]$, and $\gcd(r,s) = 1$ then implies $j(\alpha) = 0$ in $J^\alg_G(Z)[\tfrac{1}{|G|}]$. Thus $j(\alpha)$ has finite order dividing a power of $|G|$.

Conversely, if $Z^C\neq\emptyset$, then there is some equivariant map $p\colon G/C\rightarrow Z$. This must satisfy $p^\ast(j(\alpha)) = j_C(\alpha) \in J^\alg_G(G/C)\cong J^\alg_C(\ast)$. It follows that if $j(\alpha)$ is torsion, then so is $j_C(\alpha)$, implying that $j_C(\alpha) = 0$ as $J^\alg_C(\ast)$ is torsion-free. Thus $|\alpha^C| = 0$ by \cref{thm:tdp}.
\end{proof}

\begin{cor}\label{cor:zc}
If $j(\alpha) = 0$ in $J^\alg_G(Z)_{(|G|)}$, then $j(\alpha)=0$ in $J^\alg_G(Z)$.
\end{cor}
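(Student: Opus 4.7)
The plan is to reduce the corollary to a direct application of \cref{prop:zc}. The key observation is that $|G|$-localization here inverts primes coprime to $|G|$, so the hypothesis $j(\alpha)=0$ in $J_G(Z)_{(|G|)}$ unpacks to the existence of an integer $n$ coprime to $|G|$ with $n\cdot j(\alpha)=0$ in $J_G(Z)$. In particular this guarantees that $j(\alpha)$ is a torsion element of $J_G(Z)$.

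Once I have that $j(\alpha)$ is torsion, I invoke the second half of \cref{prop:zc}, which asserts that any torsion class of the form $j(\alpha)$ has order dividing a power of $|G|$. Thus the order of $j(\alpha)$ simultaneously divides $n$ (which is coprime to $|G|$) and a power of $|G|$; the only nonnegative integer with this property is $1$, so $j(\alpha)=0$ in $J_G(Z)$.

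There is essentially no obstacle: the content lies in \cref{prop:zc}, which has already been established independently of \cref{cor:zc} (its proof uses the $J_{G,\vec r}(Z)$ device of \cref{lem:lemgen} together with the trick of choosing two coprime $\vec r$ and $\vec s$). The one thing to be careful about is not creating a circular dependency, but a quick check shows that \cref{prop:zc} and its supporting lemmas never appeal to \cref{cor:zc}, so the corollary genuinely follows as a formal consequence.
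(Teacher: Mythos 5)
Your proof is correct and is essentially the paper's own argument: the hypothesis gives that $j(\alpha)$ is killed by an integer coprime to $|G|$, hence has finite order, and then \cref{prop:zc} forces that order to divide a power of $|G|$, so it must be $1$. Your remark about non-circularity is also accurate, since \cref{prop:zc} and its supporting lemmas do not use \cref{cor:zc} (only \cref{lem:padic} later does).
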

\begin{proof}
If $j(\alpha) = 0$ in $J^\alg_G(Z)_{(|G|)}$, then $j(\alpha) \in J^\alg_G(Z)$ has finite order coprime to $|G|$. \cref{prop:zc} then implies that $j(\alpha)$ also has order dividing a power of $|G|$, so $j(\alpha)=0$.
\end{proof}

At this point, one could also carry out a $J$-theoretic analogue of \cref{ssec:maint}, giving information about invertible elements in $|G|$-local $G$-ring spectra. We leave the details to the interested reader.

\section{Examples}\label{sec:examples}

In this section we give examples of the material of the previous sections, focusing especially on the $J$-homomorphism
\[
\pi_\lambda KO_G\rightarrow \pi_\star C(a_\lambda)^\times
\]
derived from \cref{thm:jhom} and the equivalence $D(\Sigma^\infty_+ S(\lambda))\simeq C(a_\lambda)$ guaranteed by the cofiber sequence $S(\lambda)_+\rightarrow S^0\rightarrow S^\lambda$. Our goal is to demonstrate that this is in fact quite computable, and that it produces explicit computational information about $G$-equivariant homotopy theory and $G$-equivariant stable stems. 

The bulk of our work in this section lies in computing the groups $\pi_\star KO_G$, particularly information about $a_\lambda\colon \pi_\lambda KO_G\rightarrow RO(G)$. We discuss this in general \cref{ssec:kcompute}, and the examples we give have been chosen to illustrate such computations. We focus on the case where $G$ is finite.

In our examples we write
\[
j_n \in \pi_n S
\]
for the $J$-image of a generator of $\pi_{n+1}KO$, defined up to a sign, with the understanding that $j_0 = \pm 2$.

\subsection{Computing with equivariant \texorpdfstring{$K$}{K}-theory}\label{ssec:kcompute}

Let $G$ be a compact Lie group and $\lambda$ be a $G$-representation. To apply our machinery to produce  periodicities on $a_\lambda$-torsion, one needs to be able to understand the groups $\pi_{\ast\lambda}KF_G$:

\begin{center}\begin{tikzcd}
RF(G) = \pi_0 KF_G&\ar[l,"a_\lambda"']\pi_\lambda KF_G&\ar[l,"a_\lambda"']\pi_{2\lambda}KF_G&\ar[l,"a_\lambda"']\cdots
\end{tikzcd}.\end{center}
When $F = \bbC$ and $\lambda$ is a complex representation, equivariant Bott periodicity implies
\[
\pi_\lambda KU_G \cong RU(G)\{\beta_\lambda\},\qquad a_\lambda\beta_\lambda = e_\lambda,
\]
where if $V$ is a complex $G$-representation then we write
\[
e_V = \sum_i (-1)^i \Lambda^i V \in RU(G)
\]
for the $K$-theory Euler class of $V$, see for example \cite[Section IV.1]{atiyahtall1969group}. This Euler class can be computed using character information, for example combining the character identity $\chi_{\psi^k V}(g)=\chi_V(g^k)$ with Newton's identity $k\Lambda^k V = \sum_{i=1}^k (-1)^{i-1}\psi^iV\cdot \Lambda^{k-i}V$. It can also be computed using representation information: if $p_V(g,t)$ is the characteristic polynomial of $g\colon V\rightarrow V$, then $\chi_{e_V}(g) = p_V(g,1)$.

In general we do not have a complete recipe for $\pi_{\ast\lambda}KU_G$ when $\lambda$ does not admit a complex structure. As one always has Bott periodicity in the form $\pi_{(\ast+2)\lambda}KU_G\cong \pi_{\ast\lambda}KU_G\{\beta_{\bbC\otimes\lambda}\}$, we are left with the following problem.

\begin{samepage}
\begin{problem}\label{problem:roku}
For a real $G$-representation $\lambda$, describe the sequence
\begin{center}\begin{tikzcd}
RU(G)\cong \pi_0 KU_G&\ar[l,"a_\lambda"']\pi_\lambda KU_G&\ar[l,"a_\lambda"']\pi_{2\lambda}KU_G\cong RU(G)\{\beta_{\bbC\otimes\lambda}\}
\end{tikzcd},\end{center}
the composite of which is multiplication by $e_{\bbC\otimes\lambda}$.
\tqed
\end{problem}
\end{samepage}

\begin{rmk}\label{rmk:chern}
If $G$ is finite, then for any $\alpha \in RO(G)$, there is a natural character isomorphism
\[
\bbC \otimes \pi_\alpha KU_G \cong \prod_{\langle g \rangle}\widetilde{H}^0(S^{\alpha^g}/C(g),\bbC[\beta^{\pm 1}]).
\]
Here, the product is over the conjugacy classes of elements $g\in G$, and $C(g)$ is the centralizer of $g$ acting on the fixed points $S^{\alpha^g}$. In particular, the sequence of \cref{problem:roku} is easily understood after complexification. 
\tqed
\end{rmk}

\begin{rmk}
Karoubi \cite{karoubi2002equivariant} has shown that $\pi_\alpha KU_G$ is a free abelian group for any $\alpha\in RO(G)$. The ranks of these free abelian groups are determined by \cref{rmk:chern}, as described in \cite[Theorem 1.8]{karoubi2002equivariant}, so this completely describes $\pi_\star KU_G$ additively.
\tqed
\end{rmk}

Once enough is known about $\pi_\star KU_G$, one can descend to $\pi_\star KO_G$ using the homotopy fixed point spectral sequence (HFPSS)
\[
E_2 = H^\ast(C_2;\pi_\star KU_G)\Rightarrow \pi_{\star-\ast}KO_G,
\]
where $C_2 = \{\psi^{\pm 1}\}$ acts on $KU_G$ by complex conjugation. This also describes $\pi_\star KSp_G$ as $KSp_G\simeq \Sigma^4 KO_G$. We make some obervations about this spectral sequence.

\begin{rmk}\label{rmk:zkog}
The structure of $KO^G$ was determined by Segal \cite{segal1968equivariant}, and this in turn determines the HFPSS for $\pi_\ast KO_G$ in integer degrees. See \cite[Section 9]{mathewnaumannnoel2017nilpotence} for a detailed discussion. In particular, $KU^G$ is a free $KU$-module with basis indexed by the irreducible complex $G$-representations, and likewise $KO^G$ splits into a sum of $KO$-modules indexed by the irreducible real $G$-representations, where these summands are of the form $KO$, $KU$, or $KSp$, corresponding to the orthogonal, complex, and symplectic irreducibles.
\tqed
\end{rmk}

\begin{rmk}
The symplectic orientation of $KO$ implies that if $V$ is a quaternionic $G$-representation, then the associated $KU$-Thom class in $\pi_{V-|V|}KU_G$ descends to $KO_G$. See \cite[Section 5]{french2003equivariant} for further discussion.  In this case, $F(S^V,KO_G)\simeq \Sigma^{|V|}KO_G$ has fixed points determined by the representation theory of $G$ as in \cref{rmk:zkog}. If for example $V$ is a complex $G$-representation (such as $\bbC\otimes_\bbR U$ for a real representation $U$), then $V + \psi^{-1} V = \bbH \otimes_\bbC V$ is a quaternionic representation, so this reduces the computation of $\pi_\star KO_G$ to essentially finitely many degrees.
\tqed
\end{rmk}

Now suppose that $G$ is finite. We can further cut down the amount of work needed to understand $\pi_\star KF_G$ as follows. For $F = \bbR$ or $\bbC$, write 
\[
\epsilon\colon KF^G\rightarrow KF,\qquad \epsilon\colon RF(G)\rightarrow\bbZ,\qquad \epsilon(V) = \dim_F(V/G)
\]
for the projection onto the summand corresponding to the trivial representation.

\begin{lemma}\label{lem:frobenius}
For $F = \bbR$ or $\bbC$, the composite
\[
\langle\bs,\bbs\rangle\colon \begin{tikzcd}KF^G\otimes_{KF} KF^G\ar[r,"\mu"]&KF^G\ar[r,"\epsilon"]&KF\end{tikzcd}
\]
is adjoint to an equivalence $KF^G\simeq \Mod_{KF}(KF^G,KF)$.
\end{lemma}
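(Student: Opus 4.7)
The plan is to verify the claimed equivalence summand-by-summand using the Segal decomposition $KF^G \simeq \bigoplus_V M_V$ recalled just above the lemma, where $V$ ranges over irreducible $F$-representations and $M_V \in \{KO, KU, KSp\}$ according to the type of $V$. Both $KF^G$ and its $KF$-linear dual $\Mod_{KF}(KF^G, KF)$ split as direct sums of such summands, so the adjoint of the Frobenius pairing decomposes into a block matrix of pairings $M_V \otimes_{KF} M_W \to KF$, one for each pair of irreducibles.

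I would first argue that only ``diagonal'' blocks contribute. The multiplication $\mu\colon M_V \otimes_{KF} M_W \to KF^G$ factors through those $M_U$ for $U$ an $F$-irreducible constituent of $V \otimes W$, and the trivial representation appears in $V \otimes W$ precisely when $W \cong V^*$ (which for $F = \bbR$ reduces to $W = V$, since real irreducibles are self-dual). Each remaining diagonal block is then analyzed according to the Frobenius--Schur type of $V$, tracking how the multiplicity $\dim_F \End_G(V) \in \{1,2,4\}$ of the trivial summand in $V \otimes V^*$ interacts with the spectral duality of $M_V$. The orthogonal case (including every $M_V = KU$ for $F = \bbC$) is immediate since the constant is $1$ and the pairing is the identity. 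For $F = \bbR$ complex type, the pairing $KU \otimes_{KO} KU \to KO$ matches on $\pi_0$ the $C_2$-Galois trace of $KO \to KU$ (with $C_2 = \{\psi^{\pm 1}\}$); computing via the $\eta$-cofiber sequence $\Sigma KO \xrightarrow{\eta} KO \to KU$, one finds $\Map_{KO}(KU, KO) \simeq \Sigma^{-2}KU$ and its canonical $\pi_0$-generator has evaluation-at-$1$ value $2 = \mathrm{Tr}(1)$, exactly matching $\dim_\bbR \End_G(V)$. The symplectic case is analogous using $KSp \simeq \Sigma^4 KO$ and the relation $\alpha^2 = 4\beta$ in $\pi_\star KO$: the natural generator of $\Map_{KO}(KSp, KO) \simeq \Sigma^{-4}KO$ has $\pi_0$-value $\alpha^2\beta^{-1} = 4$, matching $\dim_\bbR \End_G(V) = 4$.

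The main obstacle is the numerical coincidence that makes everything align: the multiplicities $\dim_F \End_G(V) \in \{1, 2, 4\}$ appearing in the Frobenius pairing are cancelled exactly by the factors $\{1, 2, 4\}$ appearing in evaluation-at-unit for the natural generator of $\Map_{KO}(M_V, KO)$ for $M_V \in \{KO, KU, KSp\}$. Once this cancellation is verified in each case, each diagonal adjoint is an isomorphism on $\pi_0$ between $M_V$-modules of the same type, hence a $KF$-module equivalence by $M_V$-linearity together with Bott periodicity. Assembling the equivalences block by block yields the asserted equivalence $KF^G \simeq \Mod_{KF}(KF^G, KF)$.
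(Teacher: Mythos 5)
Your summand-by-summand strategy is genuinely different from the paper's (which reduces to $F=\bbC$ by base change along $KO\to KU$, using that this base change is conservative, sends $KO^G$ to $KU^G$, and is compatible with $\epsilon$; for $F=\bbC$ the module $KU^G$ is free over $KU$, so everything is detected on $\pi_0$, where the pairing is the permutation matrix pairing $V$ with $\ol{V}$). Your numerics are correct: evaluation at the $\pi_0$-generator embeds $\pi_0\Map_{KO}(M,KO)\cong\bbZ$ into $\pi_0 KO=\bbZ$ with image $d\bbZ$ for $d=1,2,4$ as $M=KO,KU,KSp$, and this matches $\dim_\bbR\End_G(V)$. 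But the final step has two real gaps. First, for a complex-type summand the conclusion ``iso on $\pi_0$, hence an equivalence'' is not justified by what you have: the adjoint is only visibly $KO$-linear, and a $KO$-linear self map of $KU$ of the form $x\mapsto ax+b\psi^{-1}(x)$ with $a+b=\pm 1$ but $a-b\neq\pm 1$ (e.g.\ $3-2\psi^{-1}$) is an isomorphism on $\pi_0$ without being an equivalence. So you need genuine $KU$-(semi)linearity of the adjoint, which you assert but do not prove --- and as stated it is the wrong claim: the Frobenius pairing on a complex-type block pairs a representation against its conjugate (the paper itself notes that for $F=\bbC$ the pairing matches $V$ with $\ol{V}$ and is not the usual inner product), so the adjoint is at best $\psi^{-1}$-semilinear over $KU$; a semilinear statement would suffice, but it must be established from the interaction of the multiplication on $KO^G$ with the Segal splitting, which is exactly the step you wave at. (For the $KO$- and $KSp$-type blocks, which are free rank-one $KO$-modules, $KO$-linearity plus the $\pi_0$ check does suffice.)

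Second, ``only diagonal blocks contribute'' is established only on $\pi_0$ by representation theory. To assemble the equivalence block by block you need the off-diagonal components $M_V\to\Map_{KO}(M_W,KO)$, $W\not\cong V$, to be nullhomotopic as maps of spectra (a matrix with invertible diagonal entries and unknown off-diagonal entries need not be invertible), or else a filtration/triangularity argument; vanishing on $\pi_0$ does not give this for free, since e.g.\ $1-\psi^{-1}\colon KU\to\Sigma^{-2}KU$ is zero on $\pi_0$ but not null. One can repair this by showing that $\pi_0\Map_{KO}(M_V\otimes_{KO}M_W,KO)$ is detected by its action on $\pi_0(M_V\otimes_{KO}M_W)$ (computable from $KU\otimes_{KO}KU\simeq KU\oplus\Sigma^2KU$, $KSp\simeq\Sigma^4KO$, etc.), or by proving the multiplication is compatible with the Segal splitting at the spectrum level, but neither is in your write-up. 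Both issues evaporate in the paper's argument, since after the conservative base change to $KU$ all summands are free $KU$-modules and the whole map is determined by the perfect $\pi_0$-pairing on $RU_G$; if you want to keep your direct real-case analysis, you must supply the (semi)linearity and off-diagonal nullhomotopy claims, or verify the map on all homotopy groups rather than just $\pi_0$.
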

\begin{proof}
As base change $\Mod_{KO}\rightarrow\Mod_{KU}$ is conservative, satisfies $KU\otimes_{KO}(KO^G)\simeq KU^G$, and is compatible with $\epsilon$, it suffices to consider the case $F = \bbC$. As $KU^G$ is free over $KU$, it suffices to show that $\langle \bs,\bbs\rangle$ induces a perfect pairing  $RU(G)\otimes_\bbZ RU(G)\rightarrow\bbZ$ on $\pi_0$. This pairing acts on irreducible $G$-representations $V$ and $W$ by
\[
\langle V,W\rangle = \dim_\bbC((V\otimes W)/G) = \dim_\bbC(\Hom(V^\vee,W)^G) = \begin{cases}1,&W\cong \psi^{-1}V,\\ 0,&\text{ otherwise}.\end{cases}
\]
This is evidently a perfect pairing, though note that it is not quite the usual inner product on $RU(G)$.
\end{proof}

This is an integral version of the height $1$ case of the $K(n)$-local duality considered by Strickland in \cite{strickland2000kn}. It has the following consequence.

\begin{prop}\label{prop:duality}
Let $F = \bbR$ or $\bbC$ and let $M$ be a $KF_G$-module. Then there is an equivalence
\begin{gather*}
\Mod_{KF_G}(M,KF_G)\rightsim \Mod_{KF}(M^G,KF),\\
(f\colon M\rightarrow KF_G)\mapsto (\epsilon\circ f^G\colon M^G\rightarrow KF^G\rightarrow KF)
\end{gather*}
of $KF$-modules.
\end{prop}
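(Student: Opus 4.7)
The plan is to factor the claimed equivalence through $\Mod_{KF^G}(M^G, KF^G)$, making use of two inputs: the unipotence of $KF_G$ established in \cite{mathewnaumannnoel2017nilpotence} and the self-duality of $KF^G$ already provided by \cref{lem:frobenius}.

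First, I would observe that both functors
\[
M\mapsto\Mod_{KF_G}(M, KF_G),\qquad M\mapsto \Mod_{KF}(M^G, KF)
\]
send colimits of $KF_G$-modules to limits of spectra, since each is a composite of contravariant mapping spectra and of $(\bs)^G$. Hence the class of $M$ for which the natural map $f\mapsto\epsilon\circ f^G$ is an equivalence is closed under colimits and retracts. The unipotence of $KF_G$ from \cite{mathewnaumannnoel2017nilpotence} implies that the genuine fixed-point functor upgrades to an equivalence $(\bs)^G\colon\Mod_{KF_G}\rightsim\Mod_{KF^G}$; in particular, $\Mod_{KF_G}$ is generated under colimits by the single module $KF_G$. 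It therefore suffices to verify the assertion in the case $M = KF_G$.

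For $M = KF_G$, the left-hand side is $\Mod_{KF_G}(KF_G, KF_G)\simeq (KF_G)^G = KF^G$ by the unit isomorphism, and the right-hand side is $\Mod_{KF}(KF^G, KF)\simeq KF^G$ by \cref{lem:frobenius}. On representables, an element $x\in KF^G$ corresponds on the left to the endomorphism ``multiplication by $x$'', whose $G$-fixed points is again multiplication by $x$ on $KF^G$. Composing with $\epsilon$ sends $y\in KF^G$ to $\epsilon(xy) = \langle x,y\rangle$, which is precisely the image of $x$ under the self-duality equivalence of \cref{lem:frobenius}. Thus $f\mapsto\epsilon\circ f^G$ is an equivalence for $M = KF_G$, and the colimit closure argument above extends this to arbitrary $KF_G$-modules.

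The main obstacle is the invocation of unipotence: once one grants that $\Mod_{KF_G}$ is generated under colimits by $KF_G$ itself, the remainder is a formal reduction to a single representable coupled with a comparison of two incarnations of the pairing $\epsilon\circ\mu$ on $KF^G$, both supplied by \cref{lem:frobenius}. The verification that the composite of the two intermediate equivalences really does send $f$ to $\epsilon\circ f^G$ is a routine unwinding of tensor-hom adjunction.
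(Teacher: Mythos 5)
Your argument is correct, and it relies on the same two essential inputs as the paper — the unipotence theorems of Mathew--Naumann--Noel for $KF_G$ and the self-duality of $KF^G$ from \cref{lem:frobenius} — but it organizes them differently. The paper proves the statement for all $M$ at once by factoring the map as a chain of three natural equivalences: first $\Mod_{KF_G}(M,KF_G)\rightsim\Mod_{KF^G}(M^G,KF^G)$ via unipotence, then insertion of the \cref{lem:frobenius} equivalence $KF^G\simeq\Mod_{KF}(KF^G,KF)$ in the target, and finally the tensor-hom adjunction identifying $\Mod_{KF}(KF^G,\bs)$ as right adjoint to restriction $\Mod_{KF^G}\rightarrow\Mod_{KF}$. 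You instead use unipotence only through its consequence that $KF_G$ generates $\Mod_{KF_G}$ under colimits (and shifts/retracts, which you should include alongside retracts since generation is as a localizing subcategory), note that both functors carry colimits in $M$ to limits — here you implicitly use that $(\bs)^G$ preserves colimits, which holds since $S$ is compact in $\Sp^G$ for finite $G$ — and then check the map on the single generator $M=KF_G$, where it becomes exactly the adjoint of the pairing $\epsilon\circ\mu$ and hence an equivalence by \cref{lem:frobenius}. Your route buys a reduction that avoids invoking the adjunction step and only needs the generation form of unipotence, at the cost of the bookkeeping about colimit-preservation and naturality; the paper's factorization is more direct and makes the identification of the map $f\mapsto\epsilon\circ f^G$ with the composite of equivalences essentially automatic, whereas in your version the spectrum-level identification of the map on the generator with the \cref{lem:frobenius} adjoint is the one point that deserves the careful unwinding you defer as routine.
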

\begin{proof}
This is a natural transformation
\[
\Mod_{KF_G}(\bs,KF_G)\rightarrow \Mod_{KF}((\bs)^G,KF)
\]
between limit-preserving functors $\Mod_{KF_G}^\op\rightarrow \Mod_{KF}$. It therefore suffices to check that it is an equivalence when evaluated on $G/H_+ \otimes KF_G$ for $H\subset G$. But in this case the map is exactly the equivalence
\[
KF^H\simeq\Mod_{KF}(KF^H,KF)
\]
guaranteed by \cref{lem:frobenius}.
\end{proof}

\begin{cor}\label{cor:duality}
For any $\alpha\in RO(G)$, the map
\[
\langle\bs,\bbs\rangle \colon \pi_\alpha KU_G\otimes_\bbZ \pi_{-\alpha}KU_G\rightarrow \bbZ,\qquad \langle x,y\rangle = \epsilon(x y)
\]
is adjoint to an isomorphism $\pi_{\alpha}KU_G\cong\Hom_\bbZ(\pi_{-\alpha} KU_G,\bbZ)$. Moreover, if $\alpha=\lambda$ is a $G$-representation then $a_\lambda\colon \pi_0 KU_G\rightarrow \pi_{-\lambda}KU_G$ is dual to $a_\lambda\colon \pi_\lambda KU_G\rightarrow \pi_0 KU_G$.
\end{cor}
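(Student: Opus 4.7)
The plan is to apply \cref{prop:duality} with $F = \bbC$ and $M = \Sigma^\alpha KU_G$, then unwrap both sides on $\pi_0$.

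The LHS of that equivalence, $\pi_0 \Mod_{KU_G}(\Sigma^\alpha KU_G, KU_G)$, is immediately $\pi_\alpha KU_G$: a $KU_G$-linear map out of a shifted free rank-one $KU_G$-module is determined by its value at the shifted unit. For the RHS, set $N = (\Sigma^\alpha KU_G)^G$, a spectrum with $\pi_n N = \pi_{n-\alpha} KU_G$ for $n \in \bbZ$. By Karoubi's computation \cite{karoubi2002equivariant}, each $\pi_\beta KU_G$ is a finitely generated free abelian group, so both $\pi_0 N$ and $\pi_1 N$ are finite free over $\bbZ$. Combined with Bott periodicity through the class $\beta \in \pi_2 KU_G$ acting on $N$, this shows $\pi_* N$ is finitely generated free as a module over $KU_* = \bbZ[\beta^{\pm 1}]$. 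The $KU$-module universal coefficients sequence then degenerates, and because $\pi_{\mathrm{odd}} KU = 0$ only the $\pi_0 N$-summand contributes in degree zero, giving
\[
\pi_0 \Mod_{KU}(N, KU) \cong \Hom_\bbZ(\pi_0 N, \bbZ) = \Hom_\bbZ(\pi_{-\alpha} KU_G, \bbZ).
\]

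To verify this isomorphism is adjoint to $\langle\bs,\bbs\rangle$, I would trace the explicit formula $f \mapsto \epsilon \circ f^G$ from \cref{prop:duality}. A class $x \in \pi_\alpha KU_G$ corresponds to the $KU_G$-linear map $(x\cdot\bs) \colon \Sigma^\alpha KU_G \to KU_G$; passing to $G$-fixed points and composing with $\epsilon$, this sends $y \in \pi_{-\alpha} KU_G = \pi_0 N$ to $\epsilon(xy) = \langle x, y\rangle$, as required. The second assertion, that $a_\lambda \colon \pi_0 KU_G \to \pi_{-\lambda} KU_G$ and $a_\lambda \colon \pi_\lambda KU_G \to \pi_0 KU_G$ are transposes under the pairing, is then automatic from commutativity of multiplication in $\pi_\star KU_G$: $\langle a_\lambda x, y\rangle = \epsilon(a_\lambda x y) = \langle x, a_\lambda y\rangle$.

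The only substantive step is the universal coefficients argument, namely combining Karoubi's freeness with Bott periodicity to conclude $\pi_* N$ is $KU_*$-free and tracking that only even-degree homotopy contributes to $\pi_0$ of the $KU$-linear dual. After that, the proof is a formal unwinding of \cref{prop:duality} and the definition of $\epsilon$.
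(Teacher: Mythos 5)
Your proposal is correct and follows essentially the same route as the paper: apply \cref{prop:duality} to $M = KU_G \otimes S^\alpha$, identify $\pi_0$ of the two sides using that $(KU_G\otimes S^\alpha)^G$ is a free $KU$-module (your Karoubi-plus-Bott argument just spells out why), and read off the pairing from the explicit formula $f\mapsto \epsilon\circ f^G$. Your derivation of the $a_\lambda$-duality from $\langle a_\lambda x, y\rangle = \langle x, a_\lambda y\rangle$ is a harmless cosmetic variant of the paper's remark that $a_\lambda\colon S^0\to S^\lambda$ is Spanier--Whitehead dual to $a_\lambda\colon S^{-\lambda}\to S^0$.
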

\begin{proof}
The adjoint $\pi_{\alpha}KU_G\rightarrow\Hom_\bbZ(\pi_{-\alpha} KU_G,\bbZ)$ may be written as
\begin{align*}
\pi_{\alpha}KU_G&\cong \pi_0 (\Mod_{KU_G}(KU_G\otimes S^\alpha,KU_G))\\
&\rightsim \pi_0 \Mod_{KU}((KU_G\otimes S^\alpha)^G,KU)\rightsim \Hom_\bbZ(\pi_{-\alpha}KU_G,\bbZ).
\end{align*}
Here, the first equivalence holds by definition, the second is obtained from \cref{prop:duality}, and the third holds as $(KU_G\otimes S^\alpha)^G$ is a free $KU$-module by Karoubi \cite{karoubi2002equivariant}. The final statement holds just as $a_\lambda\colon S^0\rightarrow S^\lambda$ is dual to $a_\lambda\colon S^{-\lambda}\rightarrow S^0$.
\end{proof}

\begin{rmk}\label{rmk:dualityhfpss}
By the Leibniz rule we have 
\[
0 = d_r(\langle x,y\rangle) = \langle d_r(x),y\rangle + \langle x,d_r(y)\rangle.
\]
In particular, the HFPSS for $\pi_{\ast+\alpha}KO_G$ is determined by the HFPSS for $\pi_{\ast-\alpha}KO_G$, cutting the work needed to compute $\pi_\star KO_G$ in half.
\tqed
\end{rmk}

\subsection{Example: cyclic groups}\label{ssex:cyclic}

Character theory ensures that much of the structure of $G$-equivariant $K$-theory is controlled by the case where $G$ is a cyclic group. So before giving more exotic examples, we begin by summarizing the structure of cyclic-equivariant $K$-theory.

\subsubsection{Complex circle-equivariant \texorpdfstring{$K$}{K}-theory}

Let $T \subset \bbC^\times$ be the circle group. We begin by describing $KU_T$. Let $L$ be the tautological complex character of $T$, so that
\[
\pi_0 KU_T \cong RU(T) \cong \bbZ[L^{\pm 1}].
\]
If $\alpha \in RO(T)$, then either $\alpha$ or $\alpha+1$ lifts to $RU(T)$, but this lift is not canonical. There are several constructions that depend on the choice of a complex structure, and for this reason it can be convenient to think of $T$-equivariant homotopy groups as graded not over $RO(T)$, but over ``$RU(T)$ adjoined $\bbR = \tfrac{1}{2}\bbC$'', i.e.\ the group
\[
\left(\bbZ\{L^n : n \in\bbZ\} \oplus \bbZ\{1\}\right)/(L^0 - 2).
\]

Associated to every virtual complex representation $\alpha = \sum_i n_i L^i$ is the invertible Bott class
\[
\beta_\alpha = \prod_i \beta_{L^i}^{n_i} \in \pi_\alpha KU_T\cong RU(T)\{\beta_\alpha\}.
\]
Together with $\pi_1 KU_T = 0$, this completely determines $\pi_\star KU_T$.

The Adams operation $\psi^{-1}$ acts on $\pi_\star KU_T$ by multiplicative automorphisms, satisfying
\[
\psi^{-1}(L) = L^{-1},\qquad \psi^{-1}(\beta_{L^i}) = -L^{-i}\cdot \beta_{L^i}.
\]
The Adams operation $\psi^k$ for $k>0$ acts on $\pi_V KU_T\cong \widetilde{KU}{}_0^T(S^V)$ for $V$ an actual representation by ring endomorphisms, where it is determined by
\[
\psi^k(L) = L^k,\qquad \psi^k(\beta_{L^i}) = (1+L^i+L^{2i}+\cdots+L^{(k-1)i})\cdot \beta_{L^i}.
\]
See for example \cite{atiyahtall1969group}. Finally, if $V = \sum_i n_i L^i$ with $n_i\geq 0$, then the Euler class $e_V\in RU(T)$ is given by
\[
e_V = \prod_i (1-L^i)^{n_i}.
\]

\subsubsection{Real circle-equivariant \texorpdfstring{$K$}{K}-theory}

We now descend to $KO_T$. A virtual $T$-representation $\alpha = \sum_i n_iL^i$ admits a Spin structure if and only if its second Stiefel--Whitney class
\[
w_2(\alpha) \equiv \sum_i n_i \cdot i \pmod{2}
\]
vanishes. In this case $\alpha$ is $KO$-orientable, in the sense that there is an equivalence
\[
F(S^\alpha,KO_T)\simeq F(S^{|\alpha|},KO_T).
\]
If $|\alpha|$ is a multiple of $8$, then this is realized by an invertible Bott class 
\[
\beta_\alpha^{\Spin} \in \pi_{\alpha}KO_T.
\]
However, if $\psi^{-1}(\alpha) \neq \alpha$, then the image of $\beta_\alpha^{\Spin}$ under the complexification $c\colon KO_T\rightarrow KU_T$ is \textit{not} guaranteed to be the complex Bott class $\beta_\alpha \in \pi_\alpha KU_T$. Instead, the value of $c(\beta_\alpha^{\Spin})$ can be determined as follows. Abbreviate $n = \sum_i  n_i\cdot i$. Under the assumption that $|\alpha|$ is a multiple of $8$, we then have
\[
\psi^{-1}(\beta_\alpha) = L^{-n}\cdot \beta_\alpha.
\]
It follows that $\pm L^{-n/2}\beta_\alpha$ are the only units in $\pi_\alpha KU_T$ fixed by $\psi^{-1}$. As the Bott class is compatible with restriction, we must have $\res^T_e(c(\beta_\alpha^{\Spin})) = \beta^{|\alpha|/2}$, and the only possibility is that
\[
c(\beta_\alpha^{\Spin}) = L^{-n/2}\beta_\alpha.
\]

We can now describe $\pi_\star KO_T$ in general. For any $\alpha = \sum_i n_i L^i$, either $\alpha$ or $\alpha+L$ is $\Spin$, so the above discussion provides a $KO_T$-linear equivalence between $\Sigma^{-\alpha}KO_T$ and an integer suspension of one of $KO_T$ or $\Sigma^{-L}KO_T$. Hence it suffices to describe the fixed points of these.

In the former case we have $\pi_0 KO_T = RO(T)$ and in general
\[
KO^T \cong KO\{1\}\oplus\bigoplus_{i>0}KU,
\]
where
\[
\im(\pi_{2n}KO_T\rightarrow\pi_{2n}KU_T) = \begin{cases}\left(\bbZ\{1\}\oplus\bbZ\{L^i+L^{-i}:i\geq 1\}\right)\beta^n, & n\equiv 0\pmod{4},\\
\left(\bbZ\{2\}\oplus\bbZ\{L^i+L^{-i}:i\geq 1\}\right)\beta^n,&n\equiv 2\pmod{4},\\
\bbZ\{L^i-L^{-i}:i\geq 1\}\beta^n,&n\equiv 1,3\pmod{4}.
\end{cases}
\]
In the latter case, $H^{>0}(C_2;\pi_{\ast+L}KU_T) = 0$, and it follows
\[
(\Sigma^{-L}KO_T)^T\simeq \bigoplus_{i\geq 0}KU,
\]
where
\[
\im(\pi_{2n+L}KO_T\rightarrow \pi_{2n+L}KU_T) = \begin{cases}
\bbZ\{L^i-L^{-i-1}\}\beta^n\beta_L,&n\equiv 0\pmod{2},\\
\bbZ\{L^i+L^{-i-1}\}\beta^n\beta_L,&n\equiv 1\pmod{2}.
\end{cases}
\]
This completely determines $\pi_\star KO_T$.

\subsubsection{Finite cyclic groups}

Let $C_n\subset T$ denote the finite subgroup of order $n$, so
\[
\pi_0 KU_{C_n}\cong RU(C_n)\cong \bbZ[L]/(L^n-1).
\]
If $\alpha = \sum_i n_i L^i$, then by restricting the above $KO_T$-linear equivalences we obtain $KO_{C_n}$-linear equivalences between $\Sigma^\alpha KO_{C_n}$ and an integer suspension of either $KO_{C_n}$ or $\Sigma^{-L}KO_{C_n}$. For $n$ even, write $\sigma$ for the real sign representation of $C_n$, satisfying $\bbC \otimes \sigma = L^{n/2}$. Then we can identify
\[
KO^{C_n} \simeq \begin{cases}
KO\{1,\sigma\} \oplus KU^{n/2-1},&n\equiv 0 \pmod{2},\\
KO\{1\}\oplus KU^{(n-1)/1},&n\equiv 1 \pmod{2},
\end{cases}
\]
and
\[
(\Sigma^{2-L}KO_{C_n})^{C_n}\simeq \begin{cases}
KU^{n/2},&n\equiv 0 \pmod{2},\\
KO\{L^{(n-1)/2}\beta_L\beta^{-1}\}\oplus KU^{(n-1)/2},&n\equiv 1 \pmod{2},
\end{cases}
\]
with the images of $\pi_\ast KO_{C_n}$ and $\pi_{\ast+L}KO_{C_n}$ in $\pi_\star KU_{C_n}$ easily determined as in the $T$-equivariant case. This completely describes $\pi_\star KO_{C_n}$ if $n$ is odd, but if $n$ is even then one must also account for degrees involving $\sigma$. For this reason it can be convenient to grade $C_n$-equivariant computations over ``$RU(C_n)$ adjoined $\bbR = \frac{1}{2}\bbC$ and $\sigma = \frac{1}{2}L^{n/2}$'', i.e.\
\[
\left(\bbZ\{L^0,\ldots,L^{n-1}\}\oplus \bbZ\{1,\sigma\}\right)/(L^0-2,~L^{n/2}-\sigma),
\]
in order to incorporate a choice of complex structure into the grading. If $\alpha$ is an element therein, then the above discussion gives a $KO_{C_n}$-linear equivalence between $\Sigma^\alpha KO_{C_n}$ and an integer suspension of one of
\[
KO_{C_n},\quad \Sigma^{-L} KO_{C_n},\quad \Sigma^{-\sigma}KO_{C_n},\quad \Sigma^{-L-\sigma}KO_{C_n}.
\]
The first two cases were described above, and the latter two can be computed using the cofiber sequence
\[
S^{-\sigma}\rightarrow S^0\rightarrow D(C_n/C_{n/2+}),
\]
ultimately allowing one to identify
\[
(\Sigma^{-\sigma}KO_{C_n})^{C_n} \simeq \begin{cases} KO\oplus \Sigma^{-1}KO\oplus KU^{n/4},&n\equiv 0\pmod{4},\\
KO\{1\}\oplus KU^{(n-2)/4},&n\equiv 2 \pmod{4},
\end{cases}
\]
as an augmentation ideal of $KO^{C_n}$, and similarly
\[
(\Sigma^{2-L-\sigma}KO_{C_n})^{C_n} \simeq \begin{cases}
KU^{n/4},&n\equiv 0 \pmod{4},\\
KU^{(n-2)/4}\oplus \Sigma^{-2}KO,&n\equiv 2 \pmod{4}.
\end{cases}
\]

\begin{ex}
Let us give details for the identification of $(\Sigma^{2-L-\sigma}KO_{C_n})^{C_n}$ for $n\equiv 2 \pmod{4}$, as the rest are similar or easier. The fiber sequence
\[
(\Sigma^{2-L-\sigma}KO_{C_n})^{C_n}\rightarrow (\Sigma^{2-L}KO_{C_n})^{C_n}\xrightarrow{\res} (\Sigma^{2-L}KO_{C_{n/2}})^{C_{n/2}},
\]
may be identified as
\begin{align*}
(\Sigma^{2-L-\sigma}&KO_{C_n})^{C_n}\rightarrow KU\{(L^i-L^{-i-1})\beta_L\beta^{-1} : 0 \leq i < \tfrac{n}{2}\}\\
&\xrightarrow{\res}KO\{L^{(n-2)/4}\beta^{-1}\beta_L\} \oplus KU\{(L^i-L^{-i-1})\beta_L\beta^{-1} : 0 \leq i < \tfrac{n-2}{4}\},
\end{align*}
where we name summands for how generators in $\pi_0$ are named after base change to $KU$. We always have
\[
\res^{C_n}_{C_{n/2}}((L^i+L^{-i-1})\beta_L\beta^{-1}) = \res^{C_n}_{C_{n/2}}((L^{i+n/2}+L^{-i-n/2-1})\beta_L\beta^{-1}).
\]
If $0\leq i <(n-2)/4$ then these terms are distinct before restriction, allowing us to split off a copy of
\[
KU\{((L^i+L^{-i-1})-(L^{i+n/2}+L^{-i-n/2-1}))\beta_L\beta^{-1} : 0 \leq i < \tfrac{n-2}{4}\}
\]
in the fiber. Thus $(\Sigma^{2-L-\sigma}KO_{C_n})^{C_n}\simeq KU^{(n-2)/4}\oplus F$ where
\[
F = \Fib\left(r\colon KU\{ (L^{(n-2)/4}+L^{(-n-2)/4})\beta^{-1}\beta_L\}\rightarrow KO\{L^{(n-2)/4}\beta^{-1}\beta_L\}\right),
\]
which we are claiming is equivalent to $\Sigma^{-2}KO$. Observe that $r$ sends the generator of $\pi_4 KU$ to that of $\pi_4 KO$. The Wood cofiber sequence $\Sigma KO\xrightarrow{\eta}KO\rightarrow KU$ yields
\[
\Mod_{KO}(KU,KO)\simeq KU,
\]
so this in fact characterizes $r$. Thus $r$ is equivalent to the twofold desuspension of the boundary map in the Wood cofiber sequence, implying $F \simeq \Sigma^{-2}KO$ as claimed.
\tqed
\end{ex}

\subsubsection{Examples}

There is already an extensive literature on the $K$-theory and $J$-theory of the lens spaces $S(kL)/C_n$, more than we can hope to summarize here. We just give some small examples illustrating general phenomena relevant to our study of periodicities.

\begin{ex}
Take $G = C_2$. The behavior of the $u_\sigma = t_{1-\sigma}$-elements in $\pi_{2^{\gamma(m)}(1-\sigma)}C(a_\sigma^{m+1})$ discussed in the introduction was analyzed in detail by Araki and Iriye in \cite[Section 3]{arakiiriye1982equivariant}. We give an example to highlight the nature of compatibility between these elements as $m$ varies.

As $4\sigma = \bbH\otimes_\bbR \sigma$ is $4$-dimensional quaternionic, we have
\[
\pi_{4\sigma}KO_{C_2} \cong RO(C_2)\cdot 2 \beta_{4\sigma},\qquad \pi_{8\sigma}KO_{C_2}\cong RO(C_2)\cdot \beta_{8\sigma},
\]
where
\[
a_\sigma^4\cdot \beta_{8\sigma} = (1-\sigma)\cdot 2 \beta_{4\sigma},\quad a_\sigma^4\cdot 2\beta_{4\sigma} = 4(1-\sigma),\quad a_\sigma^8\cdot \beta_{8\sigma} = 16(1-\sigma).
\]
Hence there exist $u_\sigma$-elements
\[
u_{4\sigma} = J(2\beta_{4\sigma}) \in \pi_{4(1-\sigma)}C(a_\sigma^4),\qquad u_{8\sigma} = J(\beta_{8\sigma}) \in \pi_{8(1-\sigma)}C(a_\sigma^8),
\]
and up to choices of orientations these satisfy
\[
\res^{C_2}_e(\partial(u_{4\sigma})) = \nu \in \pi_3 S,\qquad \res^{C_2}_e(\partial(u_{8\sigma})) = \sigma \in \pi_7 S.
\]
As $\res^{C_2}_e(\partial(u_{4\sigma}^2)) = 2\nu \neq 0$, it follows $u_{4\sigma}^2$ cannot lift to $C(a_\sigma^5)$, so the map $q\colon C(a_\sigma^8)\rightarrow C(a_\sigma^4)$ must satisfy $q(u_{8\sigma})\neq u_{4\sigma}^2$. The difference $\epsilon = u_{4\sigma}^2 \cdot q(u_{8\sigma})^{-1} $ is measured by
\begin{align*}
\epsilon &= u_{4\sigma}^{2}\cdot q(u_{8\sigma})^{-1} = J(2\beta_{4\sigma})^2 \cdot J(a_\sigma^4 \beta_{8\sigma})^{-1}\\
&= J(2\cdot 2\beta_{4\sigma} - a_\sigma^4 \beta_{8\sigma}) = J((1+\sigma)\cdot 2\beta_{4\sigma}).
\end{align*}\
Thus $\epsilon \in \pi_0 C(a_\sigma^4)^\times$ is a class satisfying $\res^{C_2}_e(\partial(\epsilon)) = 2\nu$. In fact
\[
J((1+\sigma)\cdot 2\beta_{4\sigma}) = J(\tr_e^{C_2}(2\beta^2)) = N_e^{C_2}(J(2\beta^2))
\]
where $N_e^{C_2}\colon \pi_0^e C(a_\sigma^4)\rightarrow \pi_0^{C_2}C(a_\sigma^4)$ is the norm, and where if we write $\pi_0^e C(a_\sigma^4) \cong \pi_0 D(S^3_+)\cong \bbZ[\ol{\nu}]/(\ol{\nu}^2,24\ol{\nu})$ then $J(2\beta^2) = 1+\ol{\nu}$.
\tqed
\end{ex}

\begin{ex}
We give an example of the remarks at the end of \cref{ssec:htpyequivreps}. We were guided to existence of an example like this by \cite{kobayashi1977stable}. Take $G = C_8$ and let
\[
\rho = L + L^3 + L^5 + L^7.
\]
This representation is quaternionic of real dimension $8$, and thus the Bott class $\beta_\rho \in \pi_\rho KU_{C_8}$ descends to $KO_{C_8}$, yielding
\[
\pi_\rho KO_{C_8}\cong RO_{C_8}\{\beta_\rho\}.
\]
Set $RO_{C_8} = \bbZ\{1,\sigma,\lambda,\mu,\mu'\}$ where
\[
\bbC\otimes\sigma = L^4,\quad \bbC\otimes\lambda = L^2+L^6,\quad \bbC\otimes \mu = L+L^7,\quad \bbC\otimes\mu' = L^3+L^5.
\]
Then $\psi^3(\mu) = \mu'$ implies $S^{\mu}[\tfrac{1}{3}]\simeq S^{\mu'}[\tfrac{1}{3}]$, and $(\psi^3-\psi^1)^2(\mu) = 2(\mu-\mu')$ implies $S^{2\mu}\simeq S^{2\mu'}$. If we identify $\mu$ and $\mu'$ as the underlying real representations of $L^7$ and $L^5$ respectively, then a particular choice of invertible element $\psi_3 \in \pi_{\mu'-\mu}S_{C_8}[\tfrac{1}{3}]$ is constructed in \cref{ex:powermap}.

The Euler class of $\rho$ is given by
\[
e_\rho = (1-L)(1-L^3)(1-L^5)(1-L^7) = 4+2\sigma+2\lambda-2\mu-2\mu'.
\]
A calculation reveals that $2-\mu$ has order $16$ in $RO(C_8)/(e_\rho)$, with
\[
16(2-\mu) = e_\rho\cdot (11-5\sigma-\lambda-4\mu+4\mu');
\]
so the best that the $J$-homomorphism gives is an invertible element
\[
u_{16\mu} = J((11-5\sigma-\lambda-4\mu+4\mu')\beta_\rho) \in \pi_{16(2-\mu)}^{C_8}C(a_\rho).
\]
As $\res^{C_8}_e((11-5\sigma-\lambda-4\mu+4\mu')\beta_\rho) = 4\beta^4 \in \pi_8 KO$, this element satisfies $\res^{C_8}_e(\partial(u_{16\mu})) = \pm 4 \sigma$, and in the context of \cref{thm:hfpss} this yields a differential 
\[
d_8(u_\mu^{16}) = \pm 4\sigma\cdot a_\rho u_\rho^{-1} \cdot u_\mu^{16}
\]
in the $C_8$-homotopy fixed point spectral sequence. Here, $a_\rho u_\rho^{-1}$ generates $H^8(C_8;\pi_0 S)$.

On the other hand, $8\mu$ is locally $J$-equivalent to $3\mu+5\mu'$, and another calculation shows
\[
16-(3\mu+5\mu') = e_\rho \cdot (5-3\sigma-\lambda-\mu'),
\]
thus giving $u_{3\mu+5\mu'} \in \pi_{16-3\mu-5\mu'}^{C_8} C(a_\rho)$. Hence we obtain an invertible element
\[
\psi_3^5\cdot u_{3\mu+5\mu'} \in \pi_{8(2-\mu)} ^{C_8}C(a_\rho)[\tfrac{1}{3}]
\]
playing the role of ``$u_{8\mu}$''. In the context of \cref{thm:hfpss}, by \cref{ex:jdiff} this yields a differential
\[
d_8(u_\mu^8) = \pm 2 \sigma\cdot a_\rho u_\rho^{-1}\cdot u_\mu^8
\]
in the $C_8$-homotopy fixed point spectral sequence, refining the above differential on $u_\mu^{16}$.
\tqed
\end{ex}

\begin{ex}
Let $G = C_6$ and $\rho = L+L^5$. Then $\alpha = (1-L^2-L^3+L^5)$ satisfies 
\[
e_\rho \cdot\alpha = \alpha,
\]
so there exist compatible $t_\alpha$-elements in $\pi_\alpha^{C_6} C(a_\rho^n)$ for all $n\geq 1$. As $S(\infty\rho)$ is a model for $EC_6$, by taking $n\rightarrow\infty$ this produces a $t_\alpha$-element in the completion $F(EC_{6+},S_{C_6})$. This phenomenon is generic for composite order groups, corresponding to the kernel of the completion map $RU(G)\rightarrow KU^0BG$.
\tqed
\end{ex}

\subsection{Example: the symmetric group on \texorpdfstring{$3$}{3} letters}

We give an example with an orthogonal irreducible. Let $G = \Sigma_3$ be the symmetric group on $3$ letters. Write $\sigma$ for its real sign representation and $\lambda$ for its reduced real canonical permutation representation, so that
\[
RO(\Sigma_3)\cong\bbZ\{1,\sigma,\lambda\},\qquad \sigma^2 = 1,\qquad \sigma\lambda = \lambda,\qquad \lambda^2 = 1+\sigma+\lambda.
\]
Complexification $RO(\Sigma_3)\rightarrow RU(\Sigma_3)$ is an isomorphism, and we write $RU(\Sigma_3) = \bbZ\{1,\sigma_\bbC,\lambda_\bbC\}$. Consider the sequence
\begin{center}\begin{tikzcd}
RU(\Sigma_3)\cong\pi_0 KU_{\Sigma_3}&\ar[l,"a_\lambda"']\pi_\lambda KU_{\Sigma_3}& \pi_{2\lambda}KU_{\Sigma_3}\cong RU(\Sigma_3)\{\beta_{\lambda_\bbC}\} \ar[l,"a_\lambda"']
\end{tikzcd}.\end{center}
The composite is determined by
\[
a_\lambda^2 \beta_{\lambda_\bbC} = a_{\lambda_\bbC} \beta_{\lambda_\bbC} =  e_{\lambda_\bbC} = 1 - \lambda_\bbC + \sigma_\bbC,
\]
and has image the rank $1$ subspace $\bbZ\{1-\lambda_\bbC+\sigma_\bbC\}\subset RU(\Sigma_3)$. As $\pi_\lambda KU_{\Sigma_3}$ is a free abelian group of rank $1$ \cite{karoubi2002equivariant}, the only possibility is that $\pi_\lambda KU_{\Sigma_3} \cong \bbZ\{a_\lambda\beta_{\lambda_\bbC}\}$ with $\sigma_\bbC a_\lambda = a_\lambda$ and $\lambda_\bbC a_\lambda = -a_\lambda$.
Thus $\pi_{\ast\lambda}KU_{\Sigma_3}$ has the $2$-periodic pattern
\begin{center}\begin{tikzcd}[ampersand replacement=\&, column sep=1.6cm]
\bbZ\{1,\sigma_\bbC,\lambda_\bbC\}\&\ar[l,"a_\lambda"']\bbZ\{a_\lambda\beta_{\lambda_\bbC}\}\&\ar[l,"a_\lambda"']\bbZ\{1,\sigma_\bbC,\lambda_\bbC\}\beta_{\lambda_\bbC}\&\ar[l,"a_\lambda"']
\end{tikzcd}.\end{center}
Complex conjugation acts trivially on $RU(\Sigma_3)$, and using $\res^{\Sigma_3}_{C_2}(\lambda) \cong 1+\sigma$ nd $\res^{\Sigma_3}_{C_3}(\lambda_\bbC) \cong L+L^{-1}$ we find
\[
\psi^{-1}(\beta_{\lambda_\bbC}) = \sigma_\bbC \beta_{\lambda_{\bbC}}.
\]
Thus $H^0(C_2;\pi_{\ast\lambda}KU_{\Sigma_3})$ is $4$-periodic with
\[
H^0(C_2;\pi_{m\lambda}KU_{\Sigma_3}) = \begin{cases} \bbZ\{1,\sigma_\bbC,\lambda_\bbC\},&m=0,\\
\bbZ\{a_\lambda\beta_{\lambda_\bbC}\},&m=1,\\
\bbZ\{1+\sigma_\bbC,\lambda_\bbC\}\beta_{\lambda_\bbC},&m=2,\\
\bbZ\{a_\lambda\beta_{\lambda_\bbC}^2\},&m=3,
\end{cases}
\]

We claim that $H^0(C_2;\pi_{\ast\lambda}KU_{\Sigma_3})$ consists of permanent cycles. As $4\lambda = \lambda\otimes_\bbR\bbH$ is $8$-dimensional quaternionic, we reduce to considering $H^0(C_2;\pi_{m\lambda}KU_{\Sigma_3})$ for $m\in\{0,1,2,3\}$. As $\eta\cdot (1+\sigma_\bbC)\beta_{\lambda_\bbC} = 0$, necessarily $(1+\sigma_\bbC)\beta_{\lambda_\bbC}$ is a permanent cycle. As $a_\lambda$ is a permanent cycle and
\[
\langle a_\lambda,a_\lambda\beta_{\lambda_\bbC}\rangle = 1,\qquad \langle a_\lambda^2,\lambda_\bbC \beta_{\lambda_\bbC}\rangle = -1,\qquad \langle a_\lambda^3,a_\lambda\beta_{\lambda_\bbC}^2\rangle = 3,
\]
we deduce following \cref{rmk:dualityhfpss} that the remaining classes are permanent cycles.  Thus $\pi_{\ast\lambda}KO_{\Sigma_3}$ has the $4$-periodic pattern
\begin{center}\begin{tikzcd}[row sep=tiny]
\bbZ\{1,\sigma,\lambda\}&\ar[l,"a_\lambda"']\bbZ\{a_\lambda\beta_{\lambda_\bbC}\}&\ar[l,"a_\lambda"']\bbZ\{(1+\sigma),\lambda\}\beta_{\lambda_\bbC}\\
\hphantom{\bbZ\{1,\sigma,\lambda\}}&\ar[l,"a_\lambda"']\bbZ\{a_\lambda\beta_{\lambda_\bbC}^2\}&\ar[l,"a_\lambda"']\bbZ\{1,\sigma,\lambda\}\beta_{\lambda_\bbC}^2&\ar[l,"a_\lambda"']{}
\end{tikzcd},\end{center}
where we now write $\sigma$ and $\lambda$ for classes that complexify to $\sigma_\bbC$ and $\lambda_\bbC$. In particular, the identities
\[
3^{2m}(1+\sigma-\lambda) = a^{4m+2}_\lambda\cdot -\lambda\beta_{\lambda_\bbC}^{2m+1},\qquad 3^{2m-1}(1+\sigma-\lambda) = a^{4m}_\lambda\cdot \beta_{\lambda\bbC}^{2m}
\]
imply that $C(a_\lambda^{m+1})$ admits a real $t_{1+\sigma-\lambda}$-element of order $3^{\lfloor m/2\rfloor}$. Moreover, the real $t_{1+\sigma-\lambda}$-elements
\[
t_{3^{n-1}(1+\sigma-\lambda)} = \begin{cases} J(\beta_{\lambda_\bbC}^{n}),&k\text{ even},\\
J(-\lambda\beta_{\lambda_\bbC}^{n}),&k\text{ odd},
\end{cases} \quad \in \quad \pi_{3^{n-1}(1+\sigma-\lambda)}^{\Sigma_3}C(a_\lambda^{2m})
\]
give rise to infinite periodic families
\[
\partial(t_{3^{n-1}(1+\sigma-\lambda)}^k) \in \pi_{3^{n-1}k(1+\sigma-\lambda)+2n\lambda-1}S_{\Sigma_3}
\]
with the property that
\[
\res^{\Sigma_3}_e(\partial(t_{3^{n-1}(1+\sigma-\lambda)}^k)) = k\cdot j_{4n-1} \in \pi_{4n-1}S.
\]

\subsection{The dihedral group of order \texorpdfstring{$8$}{8}}

We give a more delicate example with an orthogonal irreducible. Let $D_8$ be the dihedral group of order $8$, generated by a rotation $r$ and reflection $f$.  We then have
\[
RO(D_8) = \bbZ\{1,\sigma_r,\sigma_f,\sigma_{rf},\rho\},
\]
where $\sigma_g$ is the character with kernel $\langle r^2,g\rangle$ and $\rho$ is the tautological $2$-dimensional representation. These satisfy the usual identity between characters together with
\[
\sigma_g\cdot \rho = \rho,\qquad \rho^2 = 1+\sigma_r+\sigma_f+\sigma_{rf}.
\]

We begin by describing $\pi_{\ast\rho}KU_{D_8}$. Complexification $RO(D_8)\rightarrow RU(D_8)$ is an isomorphism. We will use this to write $RU(D_8) = \bbZ\{1,\sigma_r,\sigma_f,\sigma_{rf},\rho\}$, only with the understanding that in the context of representation-grading these symbols refer only to their real counterparts. Write $\beta_{2\rho}\in \pi_{2\rho}KU_{D_8} = \pi_{\bbC\otimes\rho}KU_{D_8}$ for the Bott class of the complex representation $\bbC\otimes\rho$ of real dimension $4$, and let $e_{\rho_\bbC} = a_\rho^2 \beta_{2\rho} \in RU(D_8)$. As $r$ acts by orientation-preserving automorphisms of $\rho$ but $f$ and $rf$ do not, we find $\Lambda^2\rho = \sigma_r$, so that
\[
e_{\rho_\bbC} = 1 + \sigma_r - \rho.
\]
This determines $\pi_{2\ast\rho}KU_{D_8}$. We next compute $\pi_\rho KU_{D_8}$. A computation shows
\begin{align*}
\ker(e_{\rho_\bbC}) &= \bbZ\{1-\sigma_r,\sigma_f-\sigma_{rf},1+\sigma_f+\rho\},\\
(e_{\rho_\bbC}) &= \bbZ\{1+\sigma_r-\rho,\sigma_f+\sigma_{rf}-\rho\}.
\end{align*}
In the terminology of \cite{karoubi2002equivariant}, only the conjugacy classes of $e$, $r$, and $r^2$ are oriented and even with respect to $\rho$, and thus $\pi_\rho KU_{D_8} = \bbZ^3$. As $\ker(a_\rho\colon \pi_{2\rho}KU_{D_8}\rightarrow\pi_\rho KU_{D_8})$ has rank $3$ and $\im(a_\rho^2\colon \pi_{2\rho}KU_{D_8}\rightarrow\pi_0 KU_{D_8}) = (e_{\rho_\bbC})\subset RU(D_8)$ is a split summand, the only possibility is that
\[
\pi_\rho KU_{D_8} = \bbZ\{a_\rho\beta_{2\rho},a_\rho \sigma_f \beta_{2\rho},b\}
\]
for some $b$ generating the kernel of $a_{\rho}$. This determines $\pi_{\ast\rho}KU_{D_8}$.

We next descend to $\pi_{\ast\rho}KO_{D_8}$. As $4\rho = \bbH \otimes_\bbR \rho$ is $8$-dimensional quaternionic, we reduce to considering $0\leq \ast \leq 3$. As
\[
\res^{D_8}_{\langle r \rangle}(\rho) = L+L^{-1},\quad \res^{D_8}_{\langle f\rangle}(\rho) = 1+\sigma = \res^{D_8}_{\langle rf\rangle}(\rho),
\]
we must have
\[
\psi^{-1}(\beta_{2\rho}) = \sigma_r \beta_{2\rho}.
\]
As $b$ must lift a multiple of $\beta$ and generates the kernel of $a_\rho$ we also have $\psi^{-1}(b) = -b$. Thus 
\[
H^0(C_2;\pi_{m\rho}KU_{D_8}) = \begin{cases}
\bbZ\{1,\sigma_r,\sigma_f,\sigma_{rf},\rho\},&m=0,\\
\bbZ\{a_\rho,a_\rho\sigma_f\}\beta_{2\rho},&m=1,\\
\bbZ\{(1+\sigma_r),(\sigma_f+\sigma_{rf}),\rho\}\beta_{2\rho},&m=2,\\
\bbZ\{a_\rho,a_\rho\sigma_f\}\beta_{2\rho}^2,&m=3.
\end{cases}
\]
We claim that all of these are permanent cycles. For $m=0$ this holds as $RO(D_8) \cong RU(D_8)$, and using $a_\rho$ this implies it for $m=1$. Necessarily $(1+\sigma_r)\beta_{2\rho}$ and $(\sigma_f+\sigma_{rf})\beta_{2\rho}$ are permanent cycles as they annihilate $\eta$, and $a_\rho\beta_{2\rho}^2$ and $a_\rho\sigma_f\beta_{2\rho}^2$ are permanent cycles because the same is true of each of $a_\rho$, $\beta_{2\rho}^2$, and $\sigma_f$. Finally, as $a_\rho \cdot \rho\beta_{2\rho} = a_\rho(1+\sigma_f)\beta_{2\rho}$, we find that $a_\rho\colon H^{3}(C_2;\pi_{2\rho+2}KU_{D_8})\rightarrow H^{3}(C_2;\pi_{\rho+2}KU_{D_8})$ is an injection, forcing $\rho \beta_{2\rho}$ to be a permanent cycle. In the end we find that $\pi_{\ast\rho}KO_{D_8}$ (modulo possible torsion classes) has the $4$-periodic pattern
\begin{center}\begin{tikzcd}[row sep=tiny]
\bbZ\{1,\sigma_r,\sigma_f,\sigma_{rf},\rho\}&\ar[l,"a_\rho"']\bbZ\{a_\rho,a_\rho\sigma_f\}\beta_{2\rho}&\ar[l,"a_\rho"']\bbZ\{(1+\sigma_r),(\sigma_f+\sigma_{rf}),\rho\}\beta_{2\rho}\\
\hphantom{\bbZ\{1,\sigma_r,\sigma_f,\sigma_{rf},\rho\}}&\ar[l,"a_\rho"']\bbZ\{a_\rho,a_\rho\sigma_f\}\beta_{2\rho}^2&\ar[l,"a_\rho"']\bbZ\{1,\sigma_r,\sigma_f,\sigma_{rf},\rho\}\beta_{2\rho}^2
\end{tikzcd}.\end{center}

Let $\alpha = 1+\sigma_r-\rho$. Then the identities
\begin{align*}
2^{4m-1}\alpha &= a_\rho^{4m}((1+2^{2m-1})+(1-2^{2m-1})\sigma_f)\beta_{2\rho}^{2m}\\
2^{4m+1}\alpha &= a_\rho^{4m+2}(2^{2m-1}(1+\sigma_r)-2^{2m-1}(\sigma_f+\sigma_{rf})-\rho)\beta_{2\rho}^{2m+1}
\end{align*}
for $m>0$ produce $t_\alpha$-elements
\[
t_{2^{2n-1}\alpha} \in \pi_{2^{2n-1}\alpha}^{D_8}C(a_\rho^{2n})
\]
satisfying
\[
\res^{D_8}_e(\partial(t_{2^{2n-1}\alpha})) = \begin{cases} 2 j_{4n-1} ,& n \equiv 0 \pmod{2},\\
j_{4n-1}, &n\equiv 1 \pmod{2}.
\end{cases}
\]
Likewise, because
\[
\res^{D_8}_{\langle r^2\rangle}(a_\rho \beta_{2\rho}) = (a_\sigma \beta_{2\sigma})^2 = \eta_{C_2}^2,
\]
the element
\[
t_\alpha = J(a_\rho \beta_{2\rho}) \in \pi_\alpha^{D_8} C(a_\rho)
\]
satisfies
\[
\res^{D_8}_{\langle r^2\rangle}(t_\alpha) = u_{2\sigma} \in \pi_{2(1-\sigma)}^{C_2}C(a_\sigma^2).
\]

\subsection{Example: the nonabelian group of order \texorpdfstring{$21$}{21}}

We give an example with a complex irreducible. Let $G = C_7\rtimes C_3 = \langle x,y:x^7=e=y^3,xy=yx^2\rangle$ be the nonabelian group of order $21$. Consulting \cite{dokchitserxxxxgroupnames} we see that this group has the five conjugacy classes
\begin{gather*}
C(e) = \{e\},\qquad C(x) = \{x,x^2,x^4\},\qquad C(x^3) = \{x^3,x^5,x^6\},\\
C(y) = \{y,yx,\ldots,yx^6\},\qquad C(y^2) = \{y^2,y^2x,\ldots,y^2x^6\},
\end{gather*}
and character table
\[
\begin{array}{c|ccccc}
&C(e)&C(x)&C(x^3)&C(y)&C(y^2)\\
\hline
1&1&1&1&1&1\\
\omega&1&1&1&\zeta_3^2&\zeta_3\\
\ol{\omega}&1&1&1&\zeta_3&\zeta_3^2\\
\rho&3&\zeta_7^3+\zeta_7^6+\zeta_7^5&\zeta_7+\zeta_7^2+\zeta_7^4&0&0\\
\ol{\rho}&3&\zeta_7+\zeta_7^2+\zeta_7^4&\zeta_7^3+\zeta_7^6+\zeta_7^5&0&0\\
\end{array}.
\]
Write $\omega_0$ and $\rho_0$ for the underlying real representations of $\omega$ and $\rho$. We describe $\pi_{\ast\rho_0}KO_{C_7\rtimes C_3}$. Observe
\[
\omega\rho = \rho = \ol{\omega}\rho,\qquad \rho^2 = \rho + 2 \ol{\rho},\qquad \rho\ol{\rho} = 1+\omega+\ol{\omega}+\rho+\ol{\rho}.
\]
Using the general character identity 
\[
\chi_{\Lambda^2U}(g) = \frac{\chi_U(g)^2 - \chi_U(g^2)}{2},
\]
we compute $\Lambda^2 \rho = \ol{\rho}$ and thus
$
e_\rho = \ol{\rho}-\rho
$
This determines $\pi_{\ast\rho}KU_{C_7\rtimes C_3}$. Restriction gives an injection $\pi_\rho KU_{C_7\rtimes C_3}\rightarrow \pi_{L^3+L^5+L^6}KU_{C_7}\times \pi_{L^0+L^1+L^2} KU_{C_3}$. Using this we compute $\psi^{-1}(\beta_\rho) = - \beta_\rho$. The same calculations works swapping $\rho$ and $\ol{\rho}$, and so we find
\[
H^0(C_2;\pi_{m_1\rho+m_2\ol{\rho}}KU_{C_7\rtimes C_3}) = \begin{cases}\bbZ\{1,\omega+\ol{\omega},\rho+\ol{\rho}\}\beta_\rho^{m_1}\beta_{\ol{\rho}}^{m_2},&m_1+m_2\text{ even},\\
\bbZ\{\omega-\ol{\omega},\rho-\ol{\rho}\}\beta_\rho^{m_1}\beta_{\ol{\rho}}^{m_2},&m_1+m_2\text{ odd}.
\end{cases}
\]
As $\bbH\otimes_\bbC \rho = \rho + \ol{\rho}$ is $12$-dimensional quaternionic, $\beta_\rho^2\beta_{\ol{\rho}}^2$ is a permanent cycle but $\beta_\rho\beta_{\ol{\rho}}$ is not. Thus we can compute $\pi_{\ast\rho_0}KO_{C_7\rtimes C_3}$, with a convenient choice of complex structure on multiples of $\rho_0$, as having the form
\begin{center}\begin{tikzcd}[row sep=0mm, column sep=small]
\bbZ\{1,\omega+\ol{\omega},\rho+\ol{\rho}\}&\ar[l,"a_\rho"']\bbZ\{\omega-\ol{\omega},\rho-\ol{\rho}\}\beta_\rho&\ar[l,"a_{\ol{\rho}}"']\bbZ\{2,\omega+\ol{\omega},\rho+\ol{\rho}\}\beta_\rho\beta_{\ol{\rho}}\\
\hphantom{\bbZ\{1,\omega+\ol{\omega},\rho+\ol{\rho}\}}&\ar[l,"a_\rho"']\bbZ\{\omega-\ol{\omega},\rho-\ol{\rho}\}\beta_\rho^2\beta_{\ol{\rho}}&\ar[l,"a_{\ol{\rho}}"']\bbZ\{1,\omega+\ol{\omega},\rho+\ol{\rho}\}\beta_\rho^2\beta_{\ol{\rho}}^2&\ar[l,"a_\rho"']{}
\end{tikzcd}.\end{center}
In writing $RO(C_7\rtimes C_3) = \bbZ\{1,\omega+\ol{\omega},\rho+\ol{\rho}\}\subset RU(C_7\rtimes C_3)$, the symbols $\omega+\ol{\omega}$ and $\rho+\ol{\rho}$ represent the real representations $\omega_0$ and $\rho_0$ underlying $\omega$ and $\rho$. For example, the identity $(\ol{\rho}-\rho) \cdot \ol{\rho} = \rho-(1+\omega+\ol{\omega})$ in $RU(C_7\rtimes C_3)$ gives a complex $t_{\rho-(\bbC+\omega+\ol{\omega})}$-element $J(\ol{\rho} \beta_{\rho}) \in \pi_{\rho-(\bbC+\omega+\ol{\omega})}^{C_7\rtimes C_3}C(a_\rho)$, with underlying real $t_{\rho_0-(2+2\omega_0)}$-element in $\pi_{\rho_0-(2+2\omega_0)}^{C_7\rtimes C_3}C(a_{\rho_0})$. The identity $a_\rho a_{\ol{\rho}} (\rho+\ol{\rho})\beta_\rho\beta_{\ol{\rho}} = (\rho+\ol{\rho})-(2+2(\omega+\ol{\omega}))$ then implies that this in fact lifts to $t\in \pi_{\rho_0-(2+2\omega_0)}^{C_7\rtimes C_3}C(a_{\rho_0}^2)$ satisfying $\res^{C_7\rtimes C_3}_e(\partial(t)) = 3j_{11}$.

\subsection{Example: the quaternion group of order \texorpdfstring{$8$}{8}}\label{ssec:quaternions}

We give an example with a symplectic irreducible. See also \cite{fujii1974on,mahammed1975ktheorie}. Let $G = Q_8$ be the quaternion group of order $8$. This sits in a short exact sequence 
\[
1\rightarrow C_2\rightarrow Q_8\rightarrow C_2\times C_2\rightarrow 1,
\]
and we have
\[
RU(Q_8) = \bbZ\{1,\sigma_1,\sigma_2,\sigma_3,H\},
\]
with $\bbZ\{1,\sigma_1,\sigma_2,\sigma_3\}\cong RO(C_2)$ consisting of those representations lifted from $C_2\times C_2$ and $H$ the tautological representation of $Q_8\subset Sp(1)\cong SU(2)$. Write $\bbH$ for the underlying $4$-dimensional real representation of $H$. As $Q_8$ acts freely on $S(\bbH)$, the $Q_8$-spectra $C(a_\bbH^m)$ admit all possible periodicities. Observe
\[
\sigma_i H = H,\qquad H^2 = 1+\sigma_1+\sigma_2+\sigma_3,\qquad e_H = 2 - H.
\]
This determines $\pi_{\ast\bbH}KU_{Q_8}$. Complex conjugation acts trivially, so the $E_2$-page of the HFPSS for $\pi_{\ast+\ast \bbH}KO_{Q_8}$ is given by
\[
H^\ast(C_2;\pi_{\ast+\ast \bbH}KU_{Q_8}) \cong \pi_{\ast H} KU_{Q_8} \otimes \bbZ[\beta^{\pm 2},\eta]/(2\eta).
\]
As $H$ is symplectic and the $\sigma_i$ are orthogonal we have 
\[
RO(Q_8) = \bbZ\{1,\sigma_1,\sigma_2,\sigma_3,2H\}\subset RU(Q_8),
\]
where ``$2H$'' represents the real represenation $\bbH$. This manifests in the HFPSS as a nontrivial differential $d_3(H) = H \beta^{-2} \eta^3$. As $\bbH$ is quaternionic of real dimension $4$, the class $\beta_{H}^2 \in \pi_{2
\bbH}KU_{Q_8}$ is a permanent cycle but $\beta_H \in \pi_{\bbH}KU_{Q_8}$ is not, and so $\pi_{\ast \bbH} KO_{Q_8}$ has the $2$-periodic pattern
\begin{center}\begin{tikzcd}[column sep=0.35cm]
\bbZ\{1,\sigma_1,\sigma_2,\sigma_3,2H\}&\ar[l,"a_\bbH"']\bbZ\{2,2\sigma_1,2\sigma_2,2\sigma_3,H\}\beta_H &\ar[l,"a_\bbH"']\bbZ\{1,\sigma_1,\sigma_2,\sigma_3,2H\}\beta_H^2&\ar[l,"a_\bbH"']{}
\end{tikzcd}.\end{center}
A calculation reveals that
\begin{align*}
2^{4m-1}(4-\bbH) &= a_\bbH^{2m}(2^{2m+1}-2^{2m-1}(1+\sigma_1+\sigma_2+\sigma_3)-2H)\beta_H^{2m},\\
2^{4m}(4-\bbH) &= a_\bbH^{2m+1}(2^{2m+1}-2^{2m-1}(1+\sigma_1+\sigma_2+\sigma_3)-H)\beta_H^{2m+1}
\end{align*}
for $m>0$: for example, the identities
\begin{align*}
(2-H)\cdot (1-\sigma_i) &= 2 (1-\sigma_i),\\
(2-H)\cdot (1+\sigma_1+\sigma_2+\sigma_3-2H) &= 8(1+\sigma_1+\sigma_2+\sigma_3-2H),\\
(2-H)\cdot (-H) &= \hphantom{8(}1+\sigma_1+\sigma_2+\sigma_3-2H
\end{align*}
together imply
\begin{align*}
a_\bbH^{2m}\cdot (2^{2m+1}-2^{2m-1}(1+\sigma_1+\sigma_2+\sigma_3))\beta_H^{2m} &= 2^{4m-1}(4-(1+\sigma_1+\sigma_2+\sigma_3)),\\
a_\bbH^{2m}\cdot (-2H)\beta_H^{2m} &= 2^{4m-1}(1+\sigma_1+\sigma_2+\sigma_3-\bbH),
\end{align*}
and adding these yields the first; the second is similar. Hence if we define
\[
p(n) = \begin{cases}
2n,&n\text{ even},\\
2n+1,&n\text{ odd},
\end{cases}
\]
then there are $t_{4-\bbH}$-elements
\[
u_{2^{p(n)}\bbH} \in \pi_{2^{p(n)}(4-\bbH)}^{Q_8}C(a_\bbH^{n+1})
\]
for $n\geq 0$, satisfying
\[
\res^{Q_8}_e(\partial(u_{2^{p(n)}}^k)) = \begin{cases}
k\cdot j_{4n+3},&n\text{ even},\\
k\cdot 4j_{4n+3},&n\text{ odd}
\end{cases}
\]
for $k\in \bbZ$. By identifying $S(n\bbH)$ as a $(4n-1)$-skeleton of $EQ_8$, this implies that if $R$ is a $Q_8$-ring spectrum then in the homotopy fixed point spectral sequence 
\[
E_2 = H^\ast(Q_8;\pi_\ast^e R[u_{\bbH}^{\pm 1},u_{\sigma_1}^{\pm 1},u_{\sigma_2}^{\pm 1},u_{\sigma_3}^{\pm 1}])\Rightarrow \pi_\star R_h^\wedge
\]
there are differentials
\[
d_{4(n+1)}(u_\bbH^{2^{p(n)}}) = \begin{cases}
j_{4n+3}\cdot a_\bbH^{n+1}u_\bbH^{-(n+1)} \cdot u_\bbH^{2^{p(n)}},&n\text{ even},\\
4j_{4n+3} \cdot a_\bbH^{n+1}u_\bbH^{-(n+1)} \cdot u_\bbH^{2^{p(n)}},&n\text{ odd}
\end{cases}
\]
for $n\geq 0$, where $a_\bbH$ is detected by the generator of $H^4(Q_8;\pi_{4-\bbH}^e S_{Q_8}) \cong \bbZ/(8)$. In other words,
\begin{gather*}
d_4(u_\bbH) = \nu a_\bbH,\quad d_4(u_\bbH^2) = 2\nu a_\bbH u_\bbH,\quad d_4(u_\bbH^4) = 4 \nu a_\bbH u_\bbH^3,\\
d_8(u_\bbH^8) = 4\sigma a_\bbH^2 u_\bbH^6,\\
d_{12}(u_\bbH^{16}) = j_{11}a_\bbH^3u_\bbH^{13},\quad d_{12}(u_\bbH^{32}) = 2j_{11}a_\bbH^3u_\bbH^{29},\quad d_{12}(u_\bbH^{64}) = 4j_{11}a_\bbH^3 u_\bbH^{61},\\
d_{16}(u_\bbH^{128}) = 4j_{15} a_\bbH^4u_\bbH^{124},
\end{gather*}
and so forth, up to orientation of $\nu$.

\subsection{Example: the binary octahedral group}

We give a larger symplectic example. Let $2O\subset Sp(1)$ denote the binary octahedral group, of order $48$. This group is of interest to chromatic homotopy theorists as the maximal subgroup $G_{48}\subset \bbG_2$ of the extended Morava stabilizer group associated to the Honda formal group law at the prime $2$ and height $2$. Consulting \cite{dokchitserxxxxgroupnames} we find that $2O$ has character table
\[
\begin{array}{c|cccccccc}
&1&4A&3&4B&2&8A&6&8B \\
\hline
1&1&1&1&1&1&1&1&1 \\
\rho_2&1&-1&1&1&1&-1&1&-1\\
\rho_3&2&0&-1&2&2&0&-1&0\\
\rho_4&2&0&-1&0&-2&-\sqrt{2}&1&\sqrt{2} \\
\rho_5&2&0&-1&0&-2&\sqrt{2}&1&-\sqrt{2} \\
\rho_6&3&1&0&-1&3&-1&0&-1\\
\rho_7&3&-1&0&-1&3&1&0&1\\
\rho_8&4&0&1&0&-4&0&-1&0
\end{array},
\]
where $\rho_4,\rho_5,\rho_8$ are symplectic and the rest are orthogonal. 
%RU2O = ZZ[r1,r2,r3,r4,r5,r6,r7,r8]/(r1-1, r2^2-r1, r2*r3-r3, r3^2-r1-r2-r3, r2*r4-r5, r3*r4-r8, r4^2-r1-r7, r2*r5-r4, r3*r5-r8, r4*r5-r2-r6, r5^2-r1-r7, r2*r6-r7, r3*r6-r6-r7, r4*r6-r5-r8, r5*r6-r4-r8, r6^2-r1-r3-r6-r7, r2*r7-r6, r3*r7-r6-r7, r4*r7-r4-r8, r5*r7-r5-r8, r6*r7-r2-r3-r6-r7, r7^2-r1-r3-r6-r7, r2*r8-r8, r3*r8-r4-r5-r8, r4*r8-r3-r6-r7, r5*r8-r3-r6-r7, r6*r8-r4-r5-2*r8, r7*r8-r4-r5-2*r8, r8^2-r1-r2-r3-2*r6-2*r7)

The tautological representation $\bbH$ of $2O\subset Sp(1)\cong SU(2)$ can be identified with $\rho_4$, with Euler class $e_{\rho_4} = 2-\rho_4$. The element $\alpha = 1+\rho_2-\rho_3+\rho_4+\rho_5-\rho_8 \in RU(2O)$ satisfies $e_{\rho_4}\cdot \alpha = \alpha$, and as $E2O = \colim_{n\rightarrow\infty}S(n\bbH)$ this produces a complex $t_\alpha$-element in $\pi_\alpha F(E2O_+,S_{2O})$.

Because $\bbH$ is real $4$-dimensional quaternionic, as with $G=Q_8$ we can compute
\begin{align*}
\pi_{2m\bbH}KO_{2O} &= \bbZ\{1,\rho_2,\rho_3,2\rho_4,2\rho_5,\rho_6,\rho_7,2\rho_8\}\beta_{\bbH}^{2m}\\
\pi_{(2m+1)\bbH}KO_{2O} &= \bbZ\{2,2\rho_2,2\rho_3, \rho_4,\rho_5,2\rho_6,2\rho_7,\rho_8\}\beta_{\bbH}^{2m+1},
\end{align*}
with the action of $a_\bbH$ determined by $a_\bbH \beta_{\bbH} = e_{\rho_4} = 2-\rho_4$. Hence for example
\[
48(4-\bbH) = a_\bbH^2 \cdot (64-8\rho_2-8\rho_3+17\cdot 2\rho_4-7\cdot 2\rho_5-16\rho_6+8\rho_7-6\cdot 2\rho_8)\beta_{\bbH}^2,
\]
yielding an invertible class $t \in \pi_{48(4-\bbH)}^{2O}C(a_\bbH^2)$ satisfying $\res^{2O}_e(\partial(t)) = \pm 8\sigma \in \pi_7 S$. If for example $u_{8\bbH} \in \pi_{8(4-\bbH)}^{Q_8}C(a_\bbH^2)$ is as in \cref{ssec:quaternions}, then $\res^{2O}_{Q_8}(t)\neq u_{8\bbH}^6$; instead $\epsilon = \res^{2O}_{Q_8}(t)\cdot u_{8\bbH}^{-6}\in \pi_0^{Q_8} C(a_\bbH^2)^\times$ is a unit satisfying $\res^{Q_8}_e(\partial(\epsilon)) = \pm 24\sigma$.

\begingroup
\raggedright
\bibliography{refs}
\bibliographystyle{alpha}
\endgroup

\end{document}